\DeclareSymbolFont{calletters}{OMS}{cmsy}{m}{n}
\DeclareSymbolFontAlphabet{\mathcal}{calletters}
\DeclareMathAlphabet{\mathpzc}{OT1}{pzc}{m}{it}
\def\be{\begin{eqnarray}}
\def\ee{\end{eqnarray}}
\def\b*{\begin{eqnarray*}}
\def\e*{\end{eqnarray*}}
\newtheorem{Theorem}{Theorem}[part]
\newtheorem{Definition}{Definition}[part]
\newtheorem{Proposition}{Proposition}[part]
\newtheorem{Assumption}{Assumption}[part]
\newtheorem*{Assumptionn}{Assumption}
\newtheorem{Lemma}{Lemma}[part]
\newtheorem{Remark}{Remark}[part]
\newtheorem{Example}{Example}[part]
\makeatletter \@addtoreset{equation}{section}
\def \E{\mathbb{E}}
\def \F{\mathbb{F}}
\def \P{\mathbb{P}}
\def \R{\mathbb{R}}
\def\Fc{{\cal F}}
\def\Gc{{\cal G}}
\def\Pc{{\cal P}}
\def\einf{{\rm ess \, inf}}
\def\esup{{\rm ess \, sup}}
\def\={\;=\;}
\def\.{\;.}
\def \i{1\!\mbox{\rm I}}
\def\1{{\bf 1}}
\def\einf{{\rm ess \, inf}}
\def\esup{{\rm ess \, sup}}
\def\b*{\begin{eqnarray*}}
\def\e*{\end{eqnarray*}}
 \def\normeL2#1{\left\|{#1}\right\|_{L^2}}
 \title{Contract theory in a VUCA world}
 \author{Nicol\'as {\sc Hern\'andez Santib\'anez} \footnote{Department of Mathematics, Univerisity of Michigan, nihernan@umich.edu.} \and Thibaut {\sc Mastrolia}\footnote{CMAP-\'Ecole Polytechnique, Route de Saclay, Palaiseau, France. thibaut.mastrolia@polytechnique.fr}}
             \date{\today}
\begin{document}

 \maketitle

\begin{abstract} \noindent In this paper we investigate a Principal-Agent problem with moral hazard under Knightian uncertainty. We extend the seminal framework of Holmstr\"om and Milgrom by combining a Stackelberg equilibrium with a worst-case approach. We investigate a general model in the spirit of \cite{cvitanic2017dynamic}. We show that optimal contracts depend on the output and its quadratic variation, as an extension of the works of \cite{mastrolia2016moral} (by dropping all the restrictive assumptions) and \cite{sung2017optimal} (by considering a general class of admissible contracts). We characterize the best reaction effort of the agent through the solution to a second order BSDE and we show that the value of the problem of the Principal is the viscosity solution of an Hamilton-Jacobi-Bellman-Isaacs equation, without needing a dynamic programming principle, by using stochastic Perron's method. 
\vspace{5mm}

\noindent{\bf Key words:} moral hazard, Principal-Agent, second order BSDEs, volatility uncertainty, Hamilton-Jacobi-Bellman-Isaacs PDEs, stochastic Perron's method.
\vspace{5mm}

\noindent{\bf AMS 2010 subject classifications: 	93E20, 91A15, 49L25.} 

\end{abstract}

\section{Introduction}

Coming from the US army and used afterwards in the business glossary, the acronym VUCA reflects the major issues encountered to investigate risk analysis: \textbf{V}olatility, \textbf{U}ncertainty, \textbf{C}omplexity and \textbf{A}mbiguity\footnote{See for instance \textit{What VUCA Really Means for You,} N. Bennett and GJ. Lemoine, \textit{Harvard Business Review}, January-February 2014.}. The notion of volatility is at the heart of mathematical finance, where unstable properties of financial products, such as prices, are modeled through the presence of noise in their dynamics. Uncertainty is the lack of knowledge for an active agent, due to information asymmetries between him/her and the other parts involved. Complexity holds when several interconnected entities interact, leading to issues whose solutions are not obvious at first sight. Typically, the difficulties appearing in contract theory with moral hazard come from these first three concepts. In the canonical situation, a first entity named the \textit{Principal} (she) designs a monetary contract to hire another entity, named the \textit{Agent} (he) to manage her wealth. The Agent has the possibility to accept or reject the contract proposed by the Principal, so that the Principal must provide sufficiently good incentives to the Agent constrained to ensure that his reservation utility is attained. The Agent thus provides an effort which directly impacts the value of the Principal's wealth. The main difficulty is that the Principal has to design the contract without observing directly the effort provided by her Agent. We identify commonly this situation with a Stackelberg equilibrium between the Principal and the Agent: first the Principal anticipates the best reaction effort of the Agent for any given fixed salary. Then, taking into account the optimal efforts, she maximizes her utility and computes the optimal contract satisfying the reservation utility constraint. This paradigm appeared in the 1970's in discrete time models and has then been reformulated by Holmstr\"om and Milgrom in \cite{holmstrom1987aggregation} in a continuous time version of the problem in which the work of the Agent is to control the drift of a Brownian diffusion. We refer to the monographs \cite{sung2001lectures} and \cite{cvitanic2012contract} for more explanation, general overviews and mathematical treatments of this theory. \vspace{0.5em}

Several extensions of the work of Holmstr\"om and Milgrom have recently surfaced. A first noticeable extension is the study of Sannikov \cite{sannikov2008continuous} by studying a Principal-Agent problem with a retiring random time chosen by the Principal. In particular, Sannikov roughly emphasized that the problem of the Principal has to be seen as a stochastic control problem where the continuation utility of the Agent is a state variable. This idea was rigorously extended later in the works of Cvitani\'c, Possama\"i and Touzi in \cite{cvitanic2014moral,cvitanic2017dynamic} by investigating a Principal-Agent problem in which the Agent can control both the drift and the volatility of the wealth of the Principal. More precisely, they show that when the Agent takes also supremum over the possible volatilities, his value function is the solution to a second order BSDE (2BSDE for short), which the theory was introduced by Sonner, Touzi and Zhang in \cite{soner2012wellposedness} and improved by Possama\"i, Tan and Zhou in \cite{possamai2015stochastic}. The so-called dynamic programming approach of Cvitani\'c, Possama\"i and Touzi consists in restricting the set of contracts offered to the Agent to a suitable class so that the problem of the Principal is reduced to a standard stochastic control problem associated with a Hamilton-Jacobi-Bellman equation. The main difference between the unrestricted and the restricted class of contracts lies in the absolutely continuity of the increasing process appearing in the 2BSDE associated with the problem of the Agent. By building absolutely continuous approximations of the increasing process, they show that the restricted and the unrestricted problems have the same value.
 \vspace{0.5em} \noindent

In this paper we incorporate the last component of VUCA, \textbf{A}mbiguity, into the standard Principal-Agent problem. To quote Bennett and Lemoine "Ambiguity characterizes situations where there is doubt about the nature of cause-and-effect relationships". We model ambiguity by introducing a third player in the system, named the \textit{Nature}, which randomly modifies the volatility of the project. As usual, the Agent is hired by the Principal to control the drift of an output process and the Principal cannot observe the actions of the Agent. However, the Principal and the Agent are not informed about the volatility of the project and they just have some beliefs about it. Since we work under weak formulation, the uncertainty on the volatility is represented by assigning to the Principal and the Agent different sets of probability measures under which they make their decisions. We adopt a worst case approach against this scenario so that both individuals present an extreme ambiguity aversion to the problem. They act as if the third individual, the Nature, was playing against them and choosing the worst possible volatility. As a consequence, both the Principal and the Agent play zero-sum stochastic differential games against the Nature.
This work is an extension of the models proposed by Mastrolia and Possama\"{i} \cite{mastrolia2016moral} and Sung \cite{sung2017optimal} to more general frameworks, by dropping several explicit and implicit assumptions made in these papers. We consider a more general framework by not considering only exponential utilities and by not restricting \textit{a priori} the class of admissible contracts. \vspace{0.5em}

\noindent Since the seminal work of Isaacs \cite{isaacs1965differential}, differential games and more particularly zero-sum games have received a growing interest. As an overview of works related to this theory, let us recall some noticeable and relevant studies inspiring the present paper and the mathematical tools that they used. Lions and Souganidis have investigated stochastic differential games in \cite{lions1985differential} by using the viscosity solutions theory, introduced in the works of Lions \cite{lions1988viscosity,lions1989viscosity,lions1989viscosity2}. Hamad\`ene and Lepeltier have then proved in \cite{hamadene1995backward} the existence of a saddle point when the so-called Isaacs' condition is satisfied with the help of classical BSDEs. These works were then generalized to more general dynamics by Buckdahn and Li in \cite{buckdahn2008stochastic}, allowing both the drift and the diffusion terms of the output process to be impacted by control processes. Cardaliaguet and Rainer have then introduced a new notion of strategies, called path-wise strategies, to solve a differential game in \cite{cardaliaguet2009stochastic, cardaliaguet2013pathwise}. It was then extended by Bayraktar and Yao in \cite{bayraktar2013yao} by considering unbounded controls and by using a weak dynamic programming approach. Notice that all of these works mainly deal with Isaacs' condition. Buckdahn, Li and Quincampoix have then sucessfully  characterized the value of a stochastic differential games without assuming Isaacs' condition in \cite{buckdahn2014value}, by using viscosity solutions theory together with a randomization procedure of the stochastic control processes. All these works frame zero-sum games under strong formulation. The work of Pham and Zhang \cite{pham2014two} investigates a non-Markovian zero-sum game in the weak formulation, more suitable\footnote{See \cite[Section 10.4.1]{cvitanic2012contract}} for Principal-Agent problems with moral hazard, by using path-dependent PDEs.\vspace{0.5em} 

\noindent All the previous papers treat stochastic differential games with a dynamic programming principle approach (DPP for short). Although El Karoui and Tan have proved in \cite{karoui2013capacities, karoui2013capacities2} that a DPP holds for very general stochastic control problems, this is not the case for stochastic differential games. As explained in the paper of Hamad\`ene, Lepeltier and Peng \cite{hamadene1997bsdes}, a game of type "control against control" may not lead to a DPP. A central point of our work is to avoid DPP by following the stochastic Perron's method developed by Bayraktar and S\^irbu in \cite{bayraktar2012stochastic,bayraktar2013stochastic,bayraktar2014dynkin}, and applied to stochastic differential games by S\^irbu in \cite{sirbu2014stochastic}.  

\vspace{0.5em} \noindent
In our problem, we aim at mixing zero-sum differential games with Stackelberg equilibrium by following partially the dynamic programming approach introduced in \cite{cvitanic2017dynamic}. The Agent does not choose the volatility of the outcome process but his worst case approach leads to reduce his problem to the solution to a 2BSDE, seen as an infimum of BSDEs over a set of probability measures. Unlike \cite{cvitanic2017dynamic}, the problem of the Principal becomes a non-standard stochastic differential game, because the worst probability measures for the Agent and the Principal do not necessarily coincide. The main contribution of this paper is to prove that the value function of the Principal is a \textit{viscosity} solution to the Hamilton-Jacobi-Bellman-Isaacs (HJBI for short) equation associated to a restricted problem, as soon as a comparison result holds. The method that we use is based on the stochastic Perron's method of Bayraktar and S\^{i}rbu \cite{bayraktar2012stochastic,bayraktar2013stochastic,bayraktar2014dynkin,sirbu2014stochastic} and prevents a dynamic programming principle for our problem, which may be quite hard to check in practice. The stochastic Perron's method amounts to a verification result and it consists in proving that the value function of the Principal lies between a viscosity super-solution (the supremum of the stochastic sub-solutions) and a viscosity sub-solution (the infimum of the stochastic super-solutions) of the HJBI equation. Thus, as soon as a comparison theorem holds for such PDE, and the set of stochastic semi-solutions are non-empty, it follows that the value function of the Principal coincides with the unique viscosity solution. Moreover, the DPP also follows from the definition of the stochastic semi-solutions. The only restriction that we made in our work is to deal with piecewise controls for the problem of the Principal. Although this assumption is restrictive, it is very common in stochastic control theory and meaningful as explained in \cite{sirbu2014stochastic}. Moreover, in view of \cite{pham2014two, soner2013dual} we expect that the value function associated to this restricted problem coincide with the general problem. 
\vspace{0.5em} \noindent

The structure of the paper is the following, in Section \ref{preliminaries} we define the framework and the model. The problem of the Agent is solved in Section \ref{section:Agent-problem}. Section \ref{section:principal} is at the heart of our study and is the main contribution of our paper. After having studied the degeneracies of our problem, we prove that the value function of the Principal is a viscosity solution to the HJBI equation associated to a restricted problem, as soon as a comparison result holds, without assuming that a dynamic programming principle holds by using stochastic Perron's method. We also discuss examples in which we can expect that the comparison result is satisfied. To ease the reading of the paper, some technical definitions and the proofs of the two main results are postponed to the appendix.

\section{The model\label{preliminaries}}

\subsection{Canonical process, semi-martingale measure and quadratic variation}

We fix a maturity $T>0$ and a positive integer $d$. Let $C\left( \left[ 0,T \right], \mathbb{R}^d \right)$ be the space of continuous maps from $[0,T]$ into $\mathbb R^d$ and let $ \Omega \mathrel{\mathop:}= \lbrace \omega \in C\left( \left[ 0,T \right], \mathbb{R}^d \right) : \omega_0 = 0 \rbrace $ be the canonical space endowed with the uniform norm 
$$
\| \omega\|_{\infty}=\sup_{t\in [0,T]} \|\omega_t\|.
$$ We denote by $X$ the canonical process on $\Omega$, \textit{i.e.} $ X_t(x) = x_t$, for all $x\in\Omega$ and $t\in[0,T]$. We set $\mathbb G:=(\mathcal G_t)_{t\in [0,T]}$ the filtration generated by $X$ and $\mathbb G^+ := (\mathcal G_{t}^+)_{t \in [0,T]}$ its right limit, where $\mathcal G_{t}^+ :=\bigcap_{s>t} \mathcal G_s$ for $s\in[0,T)$ and $\mathcal G_{T}^+:=\mathcal G_T$. We denote by $\mathbb P_0$ the Wiener measure on $(\Omega,\mathcal G_T)$. Let $\mathbf{M}(\Omega)$ be the set of all probability measures on $(\Omega,\Gc_T)$. Recall the so--called universal filtration $\mathbb{G}^\star :=\lbrace \mathcal{G}^\star _t \rbrace_{0\le t \le T} $ defined as follows
$$
\mathcal{G}^\star _t
:=
\underset{\mathbb{P}\in\mathbf{M}(\Omega)}
{\bigcap}\mathcal{G}_t^{\mathbb{P}},
$$ 
where $\mathcal{G}_t^{\mathbb{P}}$ is the usual completion under $\mathbb{P}$.
 \vspace{0.5em}
 
For any subset $\mathcal{P}\subset\mathbf{M}(\Omega)$, a $\mathcal{P}-$polar set is a $\mathbb{P}-$negligible set for all $\mathbb{P}\in \mathcal{P}$, and we say that a property holds $\mathcal{P}-$quasi-surely if it holds outside some $\mathcal{P}-$polar set. We also introduce the filtration $\mathbb{F}^{\mathcal{P}}:=\lbrace \mathcal{F}^\mathcal{P}_t \rbrace_{0\le t \le T} $, defined by 
$$\mathcal{F}^{\mathcal{P}}_t:=\mathcal{G}_{t}^\star \vee \mathcal{T}^\mathcal{P},\ t\leq T,$$
where $\mathcal{T}^\mathcal{P}$ is the collection of $\mathcal{P}-$polar sets, and its right-continuous limit, denoted $\F^{\Pc,+}:=(\mathcal F_t^{\mathcal P,+})_{t\in [0,T]}$, and we omit the indexation with respect to $\mathcal P$ when there is no ambiguity on it. 
\vspace{0.3em}

For any subset $\mathcal P\subset\mathbf{M}(\Omega)$ and any $(t,\mathbb P)\in [0,T]\times \mathcal P$ we denote
$$
\mathcal P[\mathbb P,\mathbb F^+,t]:=\left\{ \mathbb P'\in \mathcal P,\; \mathbb P'=\mathbb P \text{ on }\mathcal F_{t}^+\right\}.
$$ 

We also recall that for every probability measure $\P$ on $\Omega$ and $\F-$stopping time $\tau$ taking values in $[0,T]$, there exists a family of regular conditional probability distribution (r.c.p.d. for short) $(\P^{\tau}_{ x })_{ x  \in \Omega}$ (see e.g. \cite{stroock2007multidimensional}), satisfying Properties $(i)-(iv)$ of \cite{possamai2015stochastic} and we refer to \cite[Section 2.1.3]{possamai2015stochastic} for more details on it.

\vspace{.5em}

We say that $\mathbb P\in \mathbf{M}(\Omega)$ is a semi--martingale measure if $X$ is a semi--martingale under $\mathbb{P}$. We denote by $\mathcal{P}_S$ the set of all semi-martingale measures. We set $\mathcal M_{d,n}(\R)$ the space of matrices with $d$ rows and $n$ columns with real entries. It is well-known, see for instance the result of \cite{karandikar1995pathwise}, that there exists an $\mathbb F$-progressively measurable process denoted by $\langle X\rangle:=(\langle X\rangle_t)_{t\in [0,T]}$ coinciding with the quadratic variation of $X$, $\mathbb P-a.s.$ for any $\mathbb P\in\mathcal{P}_S$, with density with respect to the Lebesgue measure at time $t\in [0,T]$ denoted by a non-negative symmetric matrix $\widehat{\sigma}_t\in\mathcal M_{d,d}(\R)$ defined by
$$
\widehat{\sigma}_t:= \underset{\varepsilon>0}{\underset{\varepsilon \longrightarrow 0}{\text{lim sup}}}\,  \dfrac{\langle X\rangle_t- \langle X\rangle_{t-\varepsilon}}{\varepsilon}.
$$
The formal definition of all the functional spaces mentioned in this paper can be found in Appendix \ref{appendix:spaces}.

\subsection{Weak formulation of the output process}

We start by defining $\mathfrak A$ and $\mathfrak N$ as the sets of  $\mathbb F$-adapted processes taking values in $A$ and $N$ respectively, where $A,N$ are compact subsets of some finite dimensional space. We call \textit{control process} every pair $(\alpha,\nu)\in\mathfrak A\times\mathfrak N$. To clarify the notations for the rest of the paper, $\alpha$ has to be understood as the control of the Agent and $\nu$ as the control of the Nature. Consider next the volatility coefficient for the controlled process
$$
\sigma: [0,T]\times\Omega\times N\longrightarrow\mathcal M_{d,n}(\mathbb R),
$$ 
which is assumed to be uniformly bounded and such that $\sigma\sigma^\top(\cdot,\mathfrak n)$ is an invertible $\mathbb F$-progressively measurable process for any $\mathfrak n\in N$. For every $(t,x)\in [0,T]\times \Omega $ and $\nu\in\mathfrak{N}$, we set the following SDE driven by an $n$-dimensional Brownian motion $W$
\begin{align}\label{SDE:output}
 X_s^{t,x,\nu}&=x(t)+\int_t^s \sigma(r, X^{t,x,\nu},\nu_r) dW_r,\, s\in[t,T],\\
\nonumber X_r^{t,x,\nu}&=x(r),\, r\in [0,t].
\end{align}

Similarly to \cite{cvitanic2017dynamic}, we build a control model through the weak solutions of SDE \eqref{SDE:output}. We say $(\mathbb P,\nu)$ is a weak solution of \eqref{SDE:output} if the law of $X^{t,x,\nu}_t$ under $\mathbb P$ is $\delta_{x(t)}$\footnote{$\delta$ denotes the Dirac measure.} and there exists a $\mathbb P-$Brownian motion\footnote{We refer to \cite[Theorem 4.5.2]{stroock2007multidimensional} for more details on it.}, denoted by $W^\mathbb P$, such that
\begin{equation}\label{output:decomposition}
X_s=x(t)+\int_t^s \sigma(r,X,\nu_r) dW^\P_r, \, s\in [t,T], \, \mathbb P-a.s.
\end{equation}

We will denote by $\mathcal N(t,x)$ the set of weak solutions to SDE \eqref{SDE:output}. We also define the set $\mathcal P(t,x)$ of probability measures which are components of weak solutions by
$$ \mathcal P(t,x):= \bigcup_{\nu\in\mathfrak N}\mathcal P^\nu(t,x), ~ \text{where }  \mathcal P^\nu(t,x):= \left\{\mathbb P\in \mathbf M(\Omega),\; (\mathbb P,\nu)\in \mathcal N(t,x) \right\}.$$

We conclude this section by showing that the set $\mathcal P(t,x)$ satisfies an important property which is essential to deal with the wellposedness of 2BSDEs, the main tool we will use later to solve the problem of the Agent. We recall the definition of a saturated set of probability measures (see \cite[Definition 5.1]{possamai2015stochastic}).

\begin{Definition}[Saturated set of probability measures.]
A set $\mathcal P\subset \mathbf M(\Omega)$ is said to be saturated if for an arbitrary $\mathbb P\in \mathcal P$, any probability $\mathbb Q\in \mathbf M(\Omega)$ which is equivalent to $\mathbb P$ and under which $X$ is a local martingale, belongs to $\mathcal P$.
\end{Definition}

We thus have the following Lemma, whose proof follows the same lines that \cite[Proof of Proposition 5.3, step (i)]{cvitanic2017dynamic}
\begin{Lemma}
The family $\{\mathcal P(t,x),\, (t,x)\in [0,T]\times \Omega \}$ is saturated.
\end{Lemma}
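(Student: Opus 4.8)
The plan is to reduce saturation of the whole family to saturation of each member: I fix $(t,x)\in[0,T]\times\Omega$, take an arbitrary $\mathbb P\in\mathcal P(t,x)$ — so that $(\mathbb P,\nu)\in\mathcal N(t,x)$ for some $\nu\in\mathfrak N$ — together with an arbitrary $\mathbb Q\in\mathbf M(\Omega)$ equivalent to $\mathbb P$ and under which $X$ is a local martingale, and I aim to show $\mathbb Q\in\mathcal P(t,x)$. The guiding observation is that the very same control $\nu$ will witness $\mathbb Q\in\mathcal P^\nu(t,x)\subset\mathcal P(t,x)$, because passing to an equivalent measure alters neither the quadratic variation of $X$ nor the degenerate initial condition $X_r=x(r)$, $r\le t$.

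First I would pin down the quadratic variation. By \cite{karandikar1995pathwise} the process $\langle X\rangle$ is $\mathbb F$-progressively measurable and coincides $\mathbb P$-a.s. (resp. $\mathbb Q$-a.s.) with the $\mathbb P$- (resp. $\mathbb Q$-) quadratic variation of $X$. From $(\mathbb P,\nu)\in\mathcal N(t,x)$ and decomposition \eqref{output:decomposition} one reads off $d\langle X\rangle_s=\sigma\sigma^\top(s,X,\nu_s)\,ds$ on $[t,T]$ and $\langle X\rangle\equiv0$ on $[0,t]$, $\mathbb P$-a.s.; since $\mathbb Q\ll\mathbb P$ these identities persist $\mathbb Q$-a.s., so $\widehat\sigma_s=\sigma\sigma^\top(s,X,\nu_s)$ for Lebesgue-almost every $s\in[t,T]$, $\mathbb Q$-a.s. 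In the same way, the events $\{X_r=x(r),\ r\le t\}$ carry full $\mathbb Q$-mass, so the law of $X_t$ under $\mathbb Q$ is $\delta_{x(t)}$.

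Next I would produce, via martingale representation under $\mathbb Q$, a Brownian motion driving $X$ with this $\nu$. Under $\mathbb Q$ the canonical process $X$ is a continuous local martingale, constant on $[0,t]$, with $d\langle X\rangle_r=\sigma\sigma^\top(r,X,\nu_r)\,dr$ on $[t,T]$ and $\sigma\sigma^\top(r,X,\nu_r)$ invertible. Setting $M_\cdot:=\int_t^\cdot\sigma^\top(\sigma\sigma^\top)^{-1}(r,X,\nu_r)\,dX_r$, one obtains a continuous $\mathbb Q$-local martingale with $\langle M\rangle_\cdot=\int_t^\cdot\Pi_r\,dr$, where $\Pi_r:=\sigma^\top(\sigma\sigma^\top)^{-1}\sigma(r,X,\nu_r)\in\mathcal M_{n,n}(\mathbb R)$ is, for a.e. $r$, an orthogonal projection. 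Enlarging the probability space by an independent $n$-dimensional $\mathbb Q$-Brownian motion $B$ and putting $W^{\mathbb Q}:=M+\int_t^\cdot(I_n-\Pi_r)\,dB_r$, the relations $\Pi_r=\Pi_r^\top=\Pi_r^2$ give $\langle W^{\mathbb Q}\rangle_s=(s-t)I_n$, so $W^{\mathbb Q}$ is an $n$-dimensional $\mathbb Q$-Brownian motion by L\'evy's characterization, while $\sigma(I_n-\Pi)=0$ and $\sigma\sigma^\top(\sigma\sigma^\top)^{-1}=I_d$ yield $\int_t^s\sigma(r,X,\nu_r)\,dW^{\mathbb Q}_r=X_s-x(t)$, $s\in[t,T]$, $\mathbb Q$-a.s. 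Together with the previous step this displays $(\mathbb Q,\nu)$ as a weak solution of \eqref{SDE:output}, whence $\mathbb Q\in\mathcal P^\nu(t,x)\subset\mathcal P(t,x)$; as $\mathbb P$ and $\mathbb Q$ were arbitrary, $\mathcal P(t,x)$ is saturated, following step (i) of the proof of \cite[Proposition 5.3]{cvitanic2017dynamic}.

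The step I expect to require the most care is precisely this martingale representation when $\sigma$ is a genuinely rectangular $d\times n$ matrix with $n>d$: integrating $dX$ against $(\sigma\sigma^\top)^{-1}$ recovers only the projection $\Pi$ of the driving noise, so one really must enlarge the probability space with an independent Brownian motion to assemble a full $n$-dimensional $W^{\mathbb Q}$, and one should note that the weak-solution convention behind the definition of $\mathcal N(t,x)$ (in the sense of \cite{stroock2007multidimensional}) accommodates such enlargements. The remainder is routine: transporting $\mathbb P$-a.s. statements to $\mathbb Q$ by equivalence, and checking adaptedness with respect to the augmented right-continuous filtration $\mathbb F^+$.
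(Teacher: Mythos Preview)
Your proof is correct and follows precisely the route indicated by the paper, which does not give its own argument but refers to step (i) of the proof of \cite[Proposition 5.3]{cvitanic2017dynamic}. You have essentially reconstructed that argument in full detail: passing the quadratic variation and the initial condition from $\mathbb P$ to the equivalent measure $\mathbb Q$, and then recovering a driving $n$-dimensional $\mathbb Q$-Brownian motion via the martingale representation with the projection $\Pi=\sigma^\top(\sigma\sigma^\top)^{-1}\sigma$ and an independent complement. Your closing remark about the rectangular case $n>d$ and the need for an enlargement is well taken and is exactly the technical point behind the footnote referencing \cite[Theorem 4.5.2]{stroock2007multidimensional}.
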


\subsection{Estimate sets of volatility}\label{section:setprobability}
The beliefs of the Agent and the Principal about the volatility of the project will be summed up in the families of measures $(\mathcal{P}_A(t,x))_{(t,x)\in [0,T]\times\Omega}$ and $(\mathcal{P}_P(t,x))_{(t,x)\in [0,T]\times\Omega}$ respectively, which satisfy that $\mathcal{P}_A(t,x)\cup\mathcal{P}_P(t,x)\subset \mathcal{P}(t,x)$ for every $(t,x)\in[0,T]\times\Omega$. We emphasize that the families $\mathcal P_A$ and $\mathcal P_P$ cannot be chosen completely arbitrarily, and have to satisfy a certain number of stability and measurability properties, which are classical in stochastic control theory, in order to use the theory of 2BSDEs developed in \cite{possamai2015stochastic}. The following assumption guarantees the well--posedness of 2BSDEs defined in the set of beliefs of the Principal and the Agent.

\begin{Assumption}\label{assumption:proba}
For $\Psi=A,P$, the set  $\Pc_\Psi(t, x)$ satisfies properties $(iii)$ (semi-analyticity of the graph of $\mathcal P_\Psi$), $(iv)$ (stability under conditioning) and $(v)$ (stability under concatenation) in \cite[Assumption 2.1]{possamai2015stochastic}.
\end{Assumption}
In particular, property $(iii)$ implies that the sets $\mathcal{P}_A(t,x)$ and $\mathcal{P}_P(t,x)$ at time $t=0$ are independent of $x$. We thus define
$$
\mathcal{P}_A:=\mathcal{P}_A(0,x),~ \mathcal{P}_P:=\mathcal{P}_P(0,x) ~ \textrm{ for every } x\in\Omega.
$$

\vspace{0.5em} \noindent
An example of estimate sets of volatility which satisfy Assumption \ref{assumption:proba} is the learning model presented in \cite{mastrolia2016moral}.

\begin{Example}
Consider, for $\Psi=A,P$, set--valued  processes $\mathbf{D}_\Psi:[0,T]\times\Omega\longmapsto 2^{\R_+^\star}$ such that for every $t\in[0,T]$
$$
\left\{(s,\omega,A)\in[0,t]\times\Omega\times\R_+^\star,\ A\in \mathbf{D}_\Psi(s,\omega)\right\}\in\mathcal B([0,t])\otimes\Fc_t\otimes\mathcal B(\R_+^\star),
$$
where $\mathcal B([0,t])$ and $\mathcal B(\R_+^\star)$ denote the Borel $\sigma-$algebra of $[0,t]$ and $\R_+^\star$ respectively. Define next, for every $(t,w)\in[0,T]\times\Omega$, the set $\mathcal{P}_\Psi(t,\omega)$ as the set of probability measures $\P\in\mathbf{M}(\Omega)$ such that
$$\widehat \sigma_s(w')\in\mathbf{D}_\Psi(s+t,\omega\otimes_t w'),\; \text{for $ds\otimes d\P-a.e.$ $(s,w')\in[0,T-t]\times\Omega$}.$$
It is shown in \cite{nutz2013constructing} that the sets $\mathbf{P}_\Psi(t,\omega)$ satisfy Assumption \ref{assumption:proba}. 
\end{Example}

\vspace{0.5em} \noindent
In the context of the previous example, \cite{mastrolia2016moral} studies the case where 
$$\mathbf{D}_A(t,\omega)=[\underline\sigma^A_t(\omega),\overline\sigma^A_t(\omega)],\; \mathbf{D}_P(t,\omega)=[\underline\sigma^P_t(\omega),\overline\sigma^P_t(\omega)],$$
for certain processes $(\underline \sigma^P,\underline\sigma^A,\overline\sigma^P,\overline\sigma^A)\in\left(\mathbb H^0(\mathbb R_+^*,\mathbb F )\right)^4$. We refer to their paper for an interpretation of such model. 

\vspace{0.5em} \noindent
To conclude this section, we define the set of weak solutions to the SDE \eqref{SDE:output} associated to the beliefs of the Principal of the Agent
$$ \mathcal N_A(t,x) = \left\{ (\mathbb P,\nu)\in \mathcal N(t,x): \mathbb{P} \in \mathcal{P}_A(t,x) \right\},~ \mathcal N_P(t,x) = \left\{ (\mathbb P,\nu)\in \mathcal N(t,x): \mathbb{P} \in \mathcal{P}_P(t,x) \right\}. $$
We define the sets $\mathcal{N}_A$ and $\mathcal{N}_P$ equivalently. The importance of these sets is that, as explained in the next section, both the Principal and the Agent consider that the volatility of the outcome process is chosen from one of them, according to their beliefs.

\subsection{The contracting problem}

We study a generalization of both the classical problem of Holmstr\"om and Milgrom \cite{holmstrom1987aggregation} and the problem of moral hazard under volatility uncertainty studied in \cite{mastrolia2016moral, sung2017optimal}. In our model, the Agent is hired by the Principal to control the drift of the outcome process $X$, but none of them have certainty about what is the volatility of the project. Both sides observe $X$ and have a "worst-case" approach to the contract, in the sense that they act as if a third player, the "Nature", was playing against them by choosing the worst possible volatility.

\subsubsection{Admissible efforts}
As usual in the literature, we work under the weak formulation of the Principal-Agent problem. Therefor, the set of controls of the Agent is restricted to the ones for which an appropriate change of measure can be applied to the weak solutions of SDE \eqref{SDE:output}. In this section we precise the condition required on a control to be an admissible effort and the impact of the actions of the Agent in the outcome process.

\vspace{0.5cm} \noindent
The Agent exerts an effort $\alpha\in\mathfrak{A}$ to manage the project, unobservable by the Principal, impacting the outcome process through the drift coefficient $ b: [0,T]\times\Omega\times A\times N\longrightarrow\R^n$, which satisfies that $b(\cdot,a,\mathfrak n)$ is an $\mathbb F$-progressively measurable process for every $(a,\mathfrak n)\in A\times N$. The actions of the Agent are costly for him, so his benefits are penalized by a cost function $c:[0,T]\times \Omega\times A\longrightarrow\R$ such that for every $a\in A$, $c(\cdot,a)$ is an $\mathbb F-$progressively measurable process. We assume that for some $p>1$ there exists $\kappa\in(1,p]$ such that
\begin{equation}
\sup_{\mathbb P\in \mathcal P_A} \mathbb E^\mathbb P\left[ \underset{0\leq t\leq T}{\esup^{\mathbb P}} \, \mathbb E^\mathbb{P} \left[\left( \int_0^T \sup_{a\in A} | c(s,X,a)|^\kappa ds \right)^\frac{p}{\kappa} \Big| \mathcal G_{t}^+ \right] \right]<+\infty.
\end{equation}
The Agent discounts the future through a map $k:[0,T]\times \Omega\times A\times N\longrightarrow\R$, such that $k(\cdot,a,\mathfrak n)$ is an $\mathbb F-$progressively measurable process for every $(a,\mathfrak n)\in A\times N$. For some $(\ell,m,\underline m)\in[1,+\infty)\times [\ell,+\infty)\times (0,\ell+m-1]$, we impose the following conditions on the maps $b$, $c$ and $k$

\begin{Assumptionn}[$\mathbf H^{\ell,m,\underline m}$] There exists $0< \underline \kappa<\kappa$ such that for any $(t,x,a,\mathfrak n)\in [0,T]\times \Omega\times A\times N$
\begin{enumerate}[label=(\roman*)]
\item The drift $b$ satisfies
$$ \|b(t,x,a,\mathfrak n)\|\leq \kappa\left( 1 +\|x\|_{t,+\infty}+\|a\|^\ell\right),~ \|\partial_ab(t,x,a,\mathfrak n)\|\leq \kappa\left( 1+\|a\|^{\ell-1}\right).$$ 
\item The map $a\longmapsto c(t,x,a)$ is increasing, strictly convex and continuously differentiable for any $(t,x)\in [0,T]\times\Omega$ and satisfies
$$0\leq c(t,x,a)\leq \kappa \Bigg(1+\|x\|_{t,\infty}+\|a\|^{\ell+m}\Bigg), $$
$$\underline\kappa  \|a\|^{\underline m} \leq  \|\partial_ac(t,x,a)\|\leq \kappa  \Bigg(1+\|a\|^{\ell+m-1}\Bigg) \, \text{ and } \overline{\lim}_{\|a\|\to \infty} \frac{c(t,x,a)}{\|a\|^\ell}=+\infty.$$
\item The discount factor $k$ is uniformly bounded by $\kappa$.
\end{enumerate}
\end{Assumptionn}

\begin{Remark}
For $(\ell,m,\underline m)=(1,1,1)$ we exactly recover the model studied in \cite{mastrolia2016moral}.
\end{Remark}

We present finally the definition of admissible efforts of the Agent.

\begin{Definition}[Admissible efforts]\label{def:admissiblecontrol}
A control process $\alpha\in\mathfrak{A}$ is said to be admissible, if for every $(\mathbb P,\nu) \in \mathcal N_A$ the following process is an $(\mathbb F,\mathbb P)$--martingale
\begin{equation}\label{alphamartingale}
\left(\mathcal E\left(\int_0^t   \sigma^\top(\sigma\sigma^\top)^{-1}(s,X,\nu_s) b(s,X,\alpha_s,\nu_s) \cdot dW_s^{\mathbb P}\right)\right)_{t\in[0,T]}.
\end{equation}
We denote by $\mathcal A$ the set of admissible efforts.
\end{Definition}

Finally, we present the impact of the actions of the Agent in the outcome process. Consider an admissible effort $\alpha\in \mathcal A$ and $(t,x)\in[0,T]\times\Omega$. For every subset $\mathcal N\subset \mathcal N(t,x)$ define 
$$ 
\mathcal N^{\alpha}:=\left\{ (\mathbb P^\alpha,\nu), \frac{d\mathbb P^\alpha}{d\mathbb P}=\mathcal E\left(\int_t^T  \sigma^\top(\sigma\sigma^\top)^{-1}(s,X,\nu_s)b(s,X,\alpha_s,\nu_s) \cdot dW_s^{\mathbb P}\right), (\mathbb P,\nu)\in \mathcal N\right\} .
$$ 
Thus, under Assumption $(\mathbf H^{\ell,m,\underline m})$, by Girsanov's Theorem we have for any $\alpha\in\mathcal A$, and for any $(\mathbb P^\alpha,\nu)\in  \mathcal N^{\alpha}$

\begin{equation}\label{output:decomposasitiondrift}
X_s=x_t+\int_t^s b(r,X,\alpha_r, \nu_r) dr+\int_t^s \sigma(r,X,\nu_r) dW^{\alpha}_r, \, s\in [t,T], \, \mathbb P^\alpha-a.s.,
\end{equation}
where $W^\alpha$ is a $\mathbb P^\alpha-$Brownian motion. More precisely, 
$$W^{\alpha}:=W^\mathbb P-\int_t^\cdot  \sigma^\top(\sigma\sigma^\top)^{-1}(r,X,\nu_r) b(r,X,\alpha_r, \nu_r) dr,$$
for some $\mathbb P\in \mathcal P$. 

\subsubsection{Admissible contracts}

The Principal offers to the Agent a final salary taking place on the horizon $T$. Since the Principal can observe merely the outcome process $X$, a contract corresponds to an $\mathcal F_T$-measurable random variable $\xi$. The Agent benefits from the payments of the Principal through his utility function $U_A:\R\longrightarrow\R$, which depends on his terminal remuneration and is a continuous, increasing and concave map. The Principal benefits from her wealth, penalized by the salary given to the Agent through her utility function $U_P:\mathbb R\longrightarrow \R$ which is a continuous, increasing and concave map. The outcome process is not necessarily monetary so the Principal possesses a liquidation function $L:\R\longrightarrow\R$ which is assumed to be continuous with linear growth. The following (classical) notion of admissibility for the set of contracts proposed by the Principal is due to the fact that we will reduce later the problem of the Agent to solve a 2BSDE.

\begin{Definition}[Admissible contracts]\label{def:admissible}
A contract $\xi$ is called admissible, if 
\begin{itemize}
\item For some $p> 1$ there exists $\kappa\in [1,p)$ such that $U_A(\xi)\in \mathbb L^{p,\kappa}_{0}(\mathbb F, \mathcal{P}_A)$.
\item For any $(\mathbb P,\nu)\in \mathcal N_P$ we have $\mathbb E^\mathbb P\left[U_P\left(L(X_T)-\xi\right) \right]<+\infty.$
\end{itemize}
We denote by $\mathfrak{C}$ the class of admissible contracts.
\end{Definition}

\subsubsection{The problem of the Agent}

For a given contract $\xi\in\mathfrak C$ offered by the Principal, the utility of the Agent at time $t=0$, if he performs the action $\alpha\in\mathcal A$, is given by his worst--case approach over the set $\mathcal{N}_A^\alpha$ of weak solutions to \eqref{SDE:output} associated to his beliefs. That is
$$
u^A_0(\xi,\alpha):=\underset{(\P,\nu)\in\mathcal N^\alpha_A}{\inf}\E^\P\left[ \mathcal K^{\alpha,\nu}_{0,T} U_A(\xi) - \int_0^T \mathcal K^{\alpha,\nu}_{0,s}c(s,X,\alpha_s) ds\right],
$$
where
$$\mathcal K^{\alpha,\nu}_{s,t}:= \exp\left( -\int_s^t k(u,X,\alpha_u,\nu_u) du\right),\; 0\leq s\leq t\leq T.$$
The problem of the Agent, consisting into finding the action which maximizes his utility, is therefore
\begin{equation}\label{problem:Agent}
U_0^A(\xi):=\sup_{\alpha \in \mathcal A}\underset{(\P,\nu)\in\mathcal N^\alpha_A }{\inf}\E^\P\left[ \mathcal K^{\alpha,\nu}_{0,T} U_A(\xi) - \int_0^T \mathcal K^{\alpha,\nu}_{0,s} c(s, X,\alpha_s)ds\right].
\end{equation}

We will denote by $\mathcal A^\star(\xi)$ the set of optimal $\alpha \in  \mathcal A$ when $\xi$ is offered, and define the set of optimal weak solutions 
$$\mathcal N_A^\star(\xi):= \bigcup_{\alpha^\star\in \mathcal A^\star(\xi)} \mathcal N_A^{\alpha^\star}.$$

\subsubsection{The problem of the Principal}

Since the strategy of the Principal is to anticipate the response of the Agent to the offered contracts, she is restricted to offer contracts such that the Agent can optimally choose his Actions. Moreover, the Agent accepts only contracts under which he obtains more benefits than his reservation utility $R_0$. Therefore, the set of admissible contracts is restricted to 
$$\Xi:=\{\xi\in \mathfrak C,\, \mathcal A^\star(\xi)\neq \emptyset,\, U_0^A(\xi)\geq R_0 \}.$$

Notice that for any $\xi\in \Xi$, the set $ \mathcal A^\star(\xi)$ is not necessarily reduced to a singleton. As is common in the literature, we will assume that when there is more than one optimal strategy for the Agent, he chooses one which is best for the Principal. We denote such a strategy by $\alpha^\star(x,\xi)$. Thus, the problem of the Principal is to find the contract which maximizes her worst--case utility (under her own beliefs)
\begin{equation}\label{pb:Principal}
U_0^p:=\sup_{\xi\in \Xi}  \underset{(\P,\nu)\in\mathcal N_P^{\alpha^\star(x,\xi)} }{\inf} \mathbb E^\mathbb P\left[U_P\left(L(X_T)-\xi\right) \right].
\end{equation}

\begin{Remark}
For the sake of simplicity, we do not add any discount factor for the Principal's problem \eqref{pb:Principal}. A model dealing with a discount factor $k^P:[0,T]\times \Omega\longrightarrow \mathbb R$ could be easily studied and does not add any difficulties, as soon as $k^P$ is sufficiently integrable, by modifying the HJBI equation \eqref{HJBI} below. 
\end{Remark}

\section{Solving the Agent problem \textit{via} 2BSDE} \label{section:Agent-problem}
In this section we study the Agent's problem \eqref{problem:Agent}. We follow both the study made in Section 4.1 of \cite{mastrolia2016moral} by extending it to a more general framework, and \cite{cvitanic2017dynamic} by adding uncertainty on the volatility. We mention also that another approach which does not use the theory of 2BSDEs has been proposed in \cite{sung2017optimal}.

\subsection{Definition of the Hamiltonian}
Define the function $F:[0,T]\times \Omega\times \R\times \R^d\times A\times N\longrightarrow \R$ by
$$
F(t,x,y,z,a,\mathfrak n):= -k(t, x, a,\mathfrak n)y-c(t,x,a)+ b(t,x,a,\mathfrak n)\cdot z.
$$

Define also for every $(t,x,\Sigma)\in [0,T]\times\Omega\times \mathcal{S}_d^+ $  the set
$$
V_t(x,\Sigma):=\left\{ \mathfrak n\in N, \sigma(t,x,\mathfrak n)\sigma^\top(t,x,\mathfrak n) = \Sigma \right\},
$$ and denote by $\mathcal{V}(\hat{\sigma}^2)$ the set of controls $\nu\in \mathfrak N$ with values in $V_t(x,\hat{\sigma}_t^2)$, $dt\otimes\mathbb{P}$-a.e. for every $\mathbb P\in \mathcal P_A$.

\vspace{.5em}
The Hamiltonian $H:[0,T]\times \Omega\times \R\times \R^d\times \mathcal S_d^+\longrightarrow \R$ associated with the problem of the Agent \eqref{problem:Agent} is defined by (see \cite{buckdahn2008stochastic}) 
$$ 
H(t,x,y,z,\gamma):=\inf_{\Sigma\in \mathcal S_d^+}\left\{\frac12 \text{Tr}(\Sigma\gamma)+  \inf_{\mathfrak n\in V_t(x,\Sigma)} \sup_{a\in  A}  F(t,x,y,z,a,\mathfrak n) \right\}.
$$

Notice that the infimum with respect to $\mathfrak n\in N$ in the Hamiltonian has been taken in two stages with the introduction of the sets $V_t(x,\Sigma)$.  We assume that the following assertion is enforced.
\begin{Assumption} \label{assumption:isaacs}
The following Isaac's condition is satisfied for any $(t,x,y,z,\Sigma)\in [0,T]\times \Omega\times  \mathbb R^{d+1}\times \mathcal{S}_d^+$
\begin{equation}\label{isaac condition}  
\inf_{\mathfrak n\in V_t(x,\Sigma)}\sup_{a\in A} F(t,x,y,z,a,\mathfrak n) = \sup_{a\in A} \inf_{\mathfrak n\in V_t(x,\Sigma)}F(t,x,y,z,a,\mathfrak n).
\end{equation}
\end{Assumption}

Let us define the map $F^\star:[0,T]\times \Omega\times  \mathbb{R}^{d+1} \times \mathcal{S}_d^+ \longrightarrow \R$ by
$$
F^\star (t,x,y,z,\Sigma)  :=  \sup_{a\in A} \inf_{\mathfrak n\in V_t(x,\Sigma)} F(t,x,y,z,a,\mathfrak n).
$$
We thus state a fundamental lemma on the growth of any control $\alpha^\star$ which is a saddle point in \eqref{isaac condition}. We refer to the proof of \cite[Lemma 4.1]{emp16} which fits our setting. 

\begin{Lemma}\label{lemmagrowth} Let Assumption $(\mathbf H^{\ell,m,\underline m})$ hold. Then, for any $(t,x,y,z,\Sigma)\in [0,T]\times \Omega\times  \mathbb R^{d+1}\times \mathcal{S}_d^+$ and for any maximiser $\alpha^\star$  of $F^\star(t,x,y,z,\Sigma)$, there exists some positive constant $C$ such that
$$\|\alpha^\star(t,x,y,z,\Sigma )\|\leq C\left(1+ \|z\|^{\frac{1}{\underline m+1-\ell}} \right),  $$
$$ |F^\star(t,x,y,z,\Sigma)| \leq C\left(1 +\|x\|_{t,\infty}+|y| +\|z\|^{\frac{\ell+m}{\underline m+1-\ell}}\right).$$
\end{Lemma}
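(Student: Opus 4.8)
The plan is to exploit the structure of $F$ as an affine function of $z$ with a strictly convex cost term, and extract the growth bounds from the first-order optimality condition. First I would fix $(t,x,y,z,\Sigma)$ and a maximiser $\alpha^\star$ of $a\longmapsto \inf_{\mathfrak n\in V_t(x,\Sigma)} F(t,x,y,z,a,\mathfrak n)$. Since $N$ (hence $V_t(x,\Sigma)$) is compact and $F$ is continuous in $\mathfrak n$, the inner infimum is attained at some $\mathfrak n^\star$, and because $a\longmapsto c(t,x,a)$ is strictly convex and continuously differentiable while $a\longmapsto b(t,x,a,\mathfrak n)$ and $a\longmapsto k(t,x,a,\mathfrak n)$ have the growth controlled in $(\mathbf H^{\ell,m,\underline m})$, the map $a\longmapsto F(t,x,y,z,a,\mathfrak n^\star)$ is strictly concave for $\|a\|$ large (the $-c$ term dominates), so any maximiser satisfies the stationarity relation $-\,y\,\partial_a k(t,x,\alpha^\star,\mathfrak n^\star) - \partial_a c(t,x,\alpha^\star) + \partial_a b(t,x,\alpha^\star,\mathfrak n^\star)^\top z = 0$ (up to the usual envelope/subgradient care at the boundary of $A$ if $A$ is a genuine constraint set — I would note that the growth estimate only needs the inequality coming from comparing $F$ at $\alpha^\star$ with $F$ at $0$).

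The cleaner route, which avoids differentiability subtleties, is the comparison inequality: optimality gives $F(t,x,y,z,\alpha^\star,\mathfrak n^\star)\ge F(t,x,y,z,0,\mathfrak n^\star)$, i.e.
$$
c(t,x,\alpha^\star) \le c(t,x,0) + \big(b(t,x,\alpha^\star,\mathfrak n^\star)-b(t,x,0,\mathfrak n^\star)\big)\cdot z - y\big(k(t,x,\alpha^\star,\mathfrak n^\star)-k(t,x,0,\mathfrak n^\star)\big).
$$
Using $(\mathbf H^{\ell,m,\underline m})(ii)$, the left side is bounded below by a multiple of $\|\alpha^\star\|^{\underline m+1}$ minus lower-order terms (integrate the lower bound $\underline\kappa\|a\|^{\underline m}$ on $\|\partial_a c\|$ along the segment, or use the superlinearity $\overline{\lim}\,c/\|a\|^\ell=+\infty$ together with convexity); using $(i)$ and $(iii)$, the right side is at most $C(1+\|x\|_{t,\infty}+|y|)\,(1+\|\alpha^\star\|^{\ell}) + C\|z\|\,\|\alpha^\star\|^{\ell}$ (the drift increment is $O(\|\alpha^\star\|^\ell)$ since $\|\partial_a b\|\le\kappa(1+\|a\|^{\ell-1})$, $k$ bounded). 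Since $\underline m + 1 > \ell$ by the standing constraint $\underline m\in(0,\ell+m-1]$ together with $\ell\ge 1$ — more precisely the relevant inequality $\underline m+1-\ell>0$ is exactly the hypothesis under which the exponents in the statement make sense — Young's inequality absorbs the $\|\alpha^\star\|^\ell$ factors into $\tfrac12\|\alpha^\star\|^{\underline m+1}$ plus a constant times $\|z\|^{(\underline m+1)/(\underline m+1-\ell)}$ (and terms involving $1+\|x\|_{t,\infty}+|y|$). This yields $\|\alpha^\star\|^{\underline m+1}\le C(1+\|x\|_{t,\infty}+|y|+\|z\|^{(\underline m+1)/(\underline m+1-\ell)})$, hence $\|\alpha^\star\|\le C(1+\|z\|^{1/(\underline m+1-\ell)})$ after taking $(\underline m+1)$-th roots and using subadditivity of $t\mapsto t^{1/(\underline m+1)}$ (the $\|x\|$ and $|y|$ contributions get absorbed into the constant once one observes they only affect lower-order powers — if one wants them explicitly they reappear, but the stated bound keeps only $\|z\|$, matching \cite[Lemma 4.1]{emp16}).

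For the second bound I would simply plug the effort estimate back in: $|F^\star(t,x,y,z,\Sigma)| = |F(t,x,y,z,\alpha^\star,\mathfrak n^\star)| \le \kappa|y| + c(t,x,\alpha^\star) + \|b(t,x,\alpha^\star,\mathfrak n^\star)\|\,\|z\|$, and then $c(t,x,\alpha^\star)\le\kappa(1+\|x\|_{t,\infty}+\|\alpha^\star\|^{\ell+m})\le C(1+\|x\|_{t,\infty}+|y|+\|z\|^{(\ell+m)/(\underline m+1-\ell)})$ while $\|b\|\,\|z\|\le\kappa(1+\|x\|_{t,\infty}+\|\alpha^\star\|^\ell)\|z\|\le C(1+\|x\|_{t,\infty})\|z\| + C\|z\|^{1+\ell/(\underline m+1-\ell)} = C(1+\|x\|_{t,\infty})\|z\|+C\|z\|^{(\underline m+1)/(\underline m+1-\ell)}$; since $\underline m+1\le \ell+m$ (because $\underline m\le \ell+m-1$) the dominant power is $\|z\|^{(\ell+m)/(\underline m+1-\ell)}$, and $(1+\|x\|_{t,\infty})\|z\|$ is absorbed by Young into $\|x\|_{t,\infty}+\|z\|^{(\ell+m)/(\underline m+1-\ell)}$ up to constants. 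Collecting terms gives $|F^\star|\le C(1+\|x\|_{t,\infty}+|y|+\|z\|^{(\ell+m)/(\underline m+1-\ell)})$. The main obstacle is the bookkeeping of exponents — making sure $\underline m+1-\ell>0$ so the powers are well-defined and that every mixed term (products of $\|z\|$ with powers of $\|\alpha^\star\|$, and the $\|x\|,|y|$ prefactors) is genuinely dominated after Young's inequality; since the paper defers to \cite[Lemma 4.1]{emp16} "which fits our setting," I would state the argument at this level and refer there for the detailed constants.
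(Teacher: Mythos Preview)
Your approach is correct and is precisely the argument of \cite[Lemma 4.1]{emp16} to which the paper defers (the paper gives no independent proof, only the reference). One small correction: the comparison inequality $F(t,x,y,z,\alpha^\star,\mathfrak n^\star)\ge F(t,x,y,z,0,\mathfrak n^\star)$ does not follow from $\alpha^\star$ maximising $a\mapsto\inf_{\mathfrak n}F(\cdot,a,\mathfrak n)$ alone --- what you get directly is $F(\alpha^\star,\mathfrak n^\star)=\inf_{\mathfrak n}F(\alpha^\star,\mathfrak n)\ge\inf_{\mathfrak n}F(0,\mathfrak n)=F(0,\mathfrak n_0)$ for a possibly different minimiser $\mathfrak n_0$; either invoke the saddle-point property under Assumption~\ref{assumption:isaacs}, or simply run your estimate with $\mathfrak n_0$ on the right-hand side, which works equally well since the bounds in $(\mathbf H^{\ell,m,\underline m})$ are uniform in $\mathfrak n$.
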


\subsection{2BSDEs representation of the Agent's problem}

Consider the following 2BSDE
\begin{equation}\label{eq:2bsde}Y_t=U_A(\xi)+\int_t^T F^\star(s,X,Y_s,Z_s,\hat{\sigma}_s^2)ds-\int_t^TZ_s\cdot dX_s-\int_t^TdK_s,~ \mathcal P_A-q.s.
\end{equation}

\vspace{0.5em} \noindent
Recall now the notion of solution to this 2BSDE introduced in \cite{soner2012wellposedness} and extended in \cite{possamai2015stochastic}.
\begin{Definition}\label{def:sol2BSDE} We say that a triplet $(Y,Z,K)$ is a solution to the 2BSDE \eqref{eq:2bsde} if there exists $p>1$ such that $(Y,Z,K)\in\mathbb S^p_0(\mathbb F_+^{\mathcal P_A},\mathcal P_A) \times  \mathbb H^p_0(\mathbb F^{\mathcal P_A},\mathcal P_A) \times  \mathbb K^p_0(\mathbb F^{\mathcal P_A},\mathcal P_A)$  satisfies \eqref{eq:2bsde} and $K$ satisfies the minimality condition
\begin{equation}\label{eq:minimality condition}
K_t=\underset{\mathbb P'\in \mathcal P_A[\mathbb P,\mathbb F^+,t]}{ \einf^\mathbb P}\, \mathbb E^{\mathbb P'}\left[K_T \Big| \mathcal F_t^{\mathbb P,+}\right],\; t\in [0,T],\, \mathbb P-a.s.,\, \forall \mathbb P\in \mathcal P_A.
\end{equation}
\end{Definition}

\begin{Remark}
Similarly to \cite{cvitanic2017dynamic}, we use here the result of \cite{nutz2012pathwise} for stochastic integral by considering the aggregative version of the non-decreasing process $K$.
\end{Remark}

\vspace{0.5em} \noindent
From now, we set the standing assumption to be used in all the following results
\begin{Assumption}[\textbf{S}] For some $(\ell,m,\underline m)\in[1,+\infty)\times [\ell,+\infty)\times (0,\ell+m-1]$ with $ \frac{\ell+m}{\underline m+1-\ell}\leq 2$, Assumption $\mathbf{(H)}^{\ell,m,\underline m}$ holds together with Assumptions \ref{assumption:proba}. 
\end{Assumption} 

\vspace{0.5em} \noindent
We have the following result which ensures that the 2BSDE \eqref{eq:2bsde} is well-posed. Its proof is postponed to the Appendix.
\begin{Lemma} \label{lemma:wellposedness 2bsde}
Under Assumption $\mathbf{(S)}$, the 2BSDE \eqref{eq:2bsde} has a unique solution $(Y,Z,K)$ for any $\xi$ in $\mathfrak C$.  
\end{Lemma}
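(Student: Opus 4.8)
The plan is to invoke the general well-posedness theory for 2BSDEs from \cite{possamai2015stochastic}, whose hypotheses we must verify one by one in our setting. The three ingredients needed are: (a) that the family $\mathcal P_A$ is saturated and satisfies the stability/measurability requirements; (b) that the terminal condition $U_A(\xi)$ lies in the right integrability space; and (c) that the generator $F^\star$ satisfies the appropriate Lipschitz-type/growth conditions in $(y,z)$ together with the requisite integrability of $F^\star(\cdot,0,0,\hat\sigma^2)$ along the lines of \cite[Assumption 2.1 and the structural conditions]{possamai2015stochastic}. Ingredient (a) is immediate: saturation was established in the Lemma following the definition of saturated sets, and the stability under conditioning and concatenation plus semi-analyticity of the graph are exactly Assumption \ref{assumption:proba}. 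Ingredient (b) is part of the definition of an admissible contract: for $\xi\in\mathfrak C$ we have $U_A(\xi)\in\mathbb L^{p,\kappa}_0(\mathbb F,\mathcal P_A)$ by Definition \ref{def:admissible}.

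The substantive work is ingredient (c), the analysis of the generator $F^\star$. First I would record that $F^\star(t,x,y,z,\Sigma)$ is Lipschitz in $y$ uniformly in all other variables: this follows because $F$ is affine in $y$ with slope $-k(t,x,a,\mathfrak n)$, and $k$ is uniformly bounded by $\kappa$ under $(\mathbf H^{\ell,m,\underline m})(iii)$; taking $\sup_a\inf_{\mathfrak n}$ preserves the Lipschitz constant. The dependence in $z$ is the delicate point, since $F$ is affine in $z$ but with a coefficient $b(t,x,a,\mathfrak n)$ that grows like $\|a\|^\ell$, and the optimizing $a^\star$ itself grows with $\|z\|$. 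Here is where Lemma \ref{lemmagrowth} does the job: it gives $\|\alpha^\star(t,x,y,z,\Sigma)\|\le C(1+\|z\|^{1/(\underline m+1-\ell)})$ and the growth bound $|F^\star(t,x,y,z,\Sigma)|\le C(1+\|x\|_{t,\infty}+|y|+\|z\|^{(\ell+m)/(\underline m+1-\ell)})$. Under Assumption $(\mathbf S)$ we have precisely $(\ell+m)/(\underline m+1-\ell)\le 2$, so $F^\star$ has at most quadratic growth in $z$; combined with the Lipschitz bound in $z$ on bounded sets (again via the explicit form of $b$ and its derivative bound $\|\partial_a b\|\le\kappa(1+\|a\|^{\ell-1})$, and the implicit dependence of $\alpha^\star$ on $z$ controlled by Lemma \ref{lemmagrowth}) this places the generator inside the class treated in \cite{possamai2015stochastic}. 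One must also check that the "$F^\star$ evaluated at zero" term, namely $\sup_a\inf_{\mathfrak n} (-c(t,x,a))$ at $y=z=0$, has the required $\mathbb L^{p,\kappa}$-integrability: since $0\le c\le\kappa(1+\|x\|_{t,\infty}+\|a\|^{\ell+m})$ and the infimum in $\mathfrak n$ / supremum in $a$ at $z=0$ forces $a=0$ (by monotonicity of $c$ in $a$, $(\mathbf H^{\ell,m,\underline m})(ii)$), this reduces to integrability of $\|X\|_{t,\infty}$-type terms, which holds under $\mathcal P_A$ by the moment estimates available for the weak solutions (boundedness of $\sigma$), together with the explicit integrability assumption imposed on $\sup_a|c(s,X,a)|^\kappa$.

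Having verified (a)--(c), existence and uniqueness of $(Y,Z,K)\in\mathbb S^p_0\times\mathbb H^p_0\times\mathbb K^p_0$ satisfying \eqref{eq:2bsde} and the minimality condition \eqref{eq:minimality condition} follows directly from the main well-posedness theorem of \cite{possamai2015stochastic}; aggregation of $K$ into an $\mathbb F^{\mathcal P_A}$-progressively measurable process uses \cite{nutz2012pathwise} as noted in the Remark. The main obstacle is entirely in step (c): matching the nonstandard growth exponents coming from the convex cost $c$ and the effort-dependent drift $b$ to the structural conditions of \cite{possamai2015stochastic}. This is exactly why Assumption $(\mathbf S)$ imposes $(\ell+m)/(\underline m+1-\ell)\le 2$ — it is the threshold guaranteeing that $F^\star$ is at worst of quadratic growth in $z$, which is the regime in which the 2BSDE theory (and the accompanying a priori estimates that yield uniqueness) applies; pushing beyond quadratic growth would require the qBSDE/2qBSDE machinery and is deliberately excluded here.
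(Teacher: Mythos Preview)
Your proposal is correct and follows essentially the same route as the paper: verify the structural hypotheses on $\mathcal P_A$ (Assumption \ref{assumption:proba} and saturation), the integrability of $U_A(\xi)$ (Definition \ref{def:admissible}), and the growth of $F^\star$ via Lemma \ref{lemmagrowth} together with the exponent constraint $(\ell+m)/(\underline m+1-\ell)\le 2$, then invoke the general 2BSDE well-posedness theory. The only refinement worth noting is that the paper is slightly more precise about the citation: because $F^\star$ genuinely has quadratic (not Lipschitz) growth in $z$, the relevant well-posedness result is not the baseline Theorem 4.1 of \cite{possamai2015stochastic} alone but its extension via Remark 4.2 there, which replaces the Lipschitz condition on the generator by the quadratic-growth assumptions of \cite{possamai2013second}; your closing remark that going \emph{beyond} quadratic would require the q2BSDE machinery slightly understates this---the quadratic regime itself already does.
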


\vspace{0.5em} \noindent
The next Theorem is the main result of this section and it provides an equivalence between solving the Agent's problem \eqref{problem:Agent} and the 2BSDE \eqref{eq:2bsde}. Its proof is postponed to the Appendix and is similar to the proof of \cite[Proposition 5.4]{cvitanic2017dynamic}, being its extension to the worst-case volatility case.

\begin{Theorem}\label{thm:optimaleffort}
Let Assumption $\mathbf{(S)}$ hold and denote by $(Y,Z,K)$ the solution to the 2BSDE \eqref{eq:2bsde}. Then, the value function of the Agent is given by
\begin{equation}\label{resolution:pgAgent}
U_0^A(\xi)= \sup_{\alpha\in \mathcal A} \underset{(\mathbb{P},\nu)\in   \mathcal N_A^\alpha}{{\rm inf}}   \mathbb{E}^\mathbb{P} \left[ Y_0 \right].
\end{equation}
Moreover, $(\alpha^\star,\mathbb P^\star, \nu^\star)\in \mathcal A^\star(\xi)\times \mathcal N_A^{\star}(\xi)$ if and only if $(\alpha^\star,\mathbb P^\star,\nu^\star)\in \mathcal A\times \mathcal N_A$ and satisfies
\begin{itemize}
\item[$(i)$] $(\alpha^\star,\nu^\star)$ attains the sup-inf in the definition of $F^\star(\cdot,X,Y,Z,\widehat{\sigma}^2), \, dt\otimes \mathbb P^\star-$a.e.,
\item[$(ii)$] $K_T=0,\, \mathbb P^\star-$a.s.
\end{itemize}
\end{Theorem}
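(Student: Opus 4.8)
The plan is to prove the identity \eqref{resolution:pgAgent} by showing $U_0^A(\xi)=Y_0$, where $Y_0$ is the ($\mathcal{P}_A$-q.s.\ constant) initial value of the 2BSDE \eqref{eq:2bsde}, via a two-sided estimate, and then to obtain the characterisation of optimal triples by inspecting when each one-sided estimate is tight. I would set up two ingredients first. (a) The stochastic representation of \eqref{eq:2bsde}: since this 2BSDE is of infimum type (the non-decreasing $K$ enters with a minus sign and obeys the $\einf$-minimality \eqref{eq:minimality condition}), the representation theorem of \cite{possamai2015stochastic}, applied in this form, gives for every $\P\in\mathcal{P}_A$ that $Y_t=\einf^{\P}_{\P'\in\mathcal{P}_A[\P,\F^+,t]}\mathcal{Y}^{\P'}_t$ $\P$-a.s., where $(\mathcal{Y}^{\P'},\mathcal{Z}^{\P'})$ solves, under $\P'$, the standard BSDE with terminal value $U_A(\xi)$ and generator $F^\star(\cdot,X,\cdot,\cdot,\widehat{\sigma}^2)$; in particular $Y_0=\inf_{\P'\in\mathcal{P}_A}\mathcal{Y}^{\P'}_0$. (b) The linear-BSDE/Feynman--Kac identification of the Agent's reward: for $\alpha\in\mathcal{A}$, $(\P,\nu)\in\mathcal{N}_A$ and $\P^\alpha$ the Girsanov shift of Definition \ref{def:admissiblecontrol}, the drift $b\cdot z$ in $F$ being exactly the Girsanov correction, one gets $\E^{\P^\alpha}[\mathcal{K}^{\alpha,\nu}_{0,T}U_A(\xi)-\int_0^T\mathcal{K}^{\alpha,\nu}_{0,s}c(s,X,\alpha_s)\,ds]=\mathcal{Y}^{\P,\alpha,\nu}_0$, where $\mathcal{Y}^{\P,\alpha,\nu}$ solves under $\P$ the BSDE with terminal value $U_A(\xi)$ and generator $(s,y,z)\mapsto F(s,X,y,z,\alpha_s,\nu_s)$. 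I would also use that $\widehat{\sigma}^2_s=\sigma\sigma^\top(s,X,\nu_s)$ $\P$-a.e.\ for a weak solution, so $\nu\in\mathcal{V}(\widehat{\sigma}^2)$, and conversely any $\nu'\in\mathcal{V}(\widehat{\sigma}^2)$ yields $\P\in\mathcal{P}^{\nu'}$ by martingale representation, so $\nu$ may be reselected among the controls compatible with the realised quadratic variation.

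For the lower bound $U_0^A(\xi)\ge Y_0$, let $\alpha^\star$ be a measurable maximiser of $F^\star(\cdot,X,Y,Z,\widehat{\sigma}^2)$ along the 2BSDE solution; by Lemma \ref{lemmagrowth}, the exponent bound $\tfrac{\ell+m}{\underline m+1-\ell}\le 2$ of Assumption $\mathbf{(S)}$ and the $\mathbb{H}^p$-estimate on $Z$ from Lemma \ref{lemma:wellposedness 2bsde}, $\alpha^\star\in\mathcal{A}$ (verifying that the stochastic exponential \eqref{alphamartingale} associated with $\alpha^\star$ is a true martingale is the first genuinely technical point). Fix $(\P,\nu)\in\mathcal{N}_A$, set $\delta Y:=\mathcal{Y}^{\P,\alpha^\star,\nu}-Y$, and subtract the two equations: the generator gap splits into a term affine in $(\delta Y,\delta Z)$ plus $F(s,X,Y_s,Z_s,\alpha^\star_s,\nu_s)-F^\star(s,X,Y_s,Z_s,\widehat{\sigma}^2_s)$, which is $\ge 0$ since $\alpha^\star_s$ realises $\sup_a\inf_{\mathfrak n\in V_s(X,\widehat{\sigma}^2_s)}F$ while $\nu_s\in V_s(X,\widehat{\sigma}^2_s)$ $\P$-a.e. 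Thus $\delta Y$ solves a linear BSDE with non-negative source and with $K$ entering with a $+$ sign, and the linear-BSDE formula (an exponential discount times a Girsanov density, relative to $\P^{\alpha^\star}$) gives $\delta Y_0\ge 0$, i.e.\ $\mathcal{Y}^{\P,\alpha^\star,\nu}_0\ge Y_0$. Taking the infimum over $(\P,\nu)\in\mathcal{N}_A$ yields $u^A_0(\xi,\alpha^\star)\ge Y_0$, hence $U_0^A(\xi)\ge Y_0$.

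For the upper bound $U_0^A(\xi)\le Y_0$, fix $\alpha\in\mathcal{A}$ and $\eps>0$ and pick $\P^\eps\in\mathcal{P}_A$ with $\mathcal{Y}^{\P^\eps}_0\le Y_0+\eps$. Introduce the auxiliary BSDE under $\P^\eps$ with generator $G^\alpha(s,y,z):=\inf_{\mathfrak n\in V_s(X,\widehat{\sigma}^2_s)}F(s,X,y,z,\alpha_s,\mathfrak n)$; since $G^\alpha\le F^\star(\cdot,X,\cdot,\cdot,\widehat{\sigma}^2)$ pointwise, comparison of BSDEs gives $\mathcal{G}^\alpha_0\le\mathcal{Y}^{\P^\eps}_0\le Y_0+\eps$ for its solution $\mathcal{G}^\alpha$. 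A measurable selection then produces an $\F$-adapted, $V_s(X,\widehat{\sigma}^2_s)$-valued control $\nu^\circ$ that is $\eps$-optimal for $\mathfrak n\mapsto F(s,X,\mathcal{G}^\alpha_s,\mathcal{Z}^{\mathcal G^\alpha}_s,\alpha_s,\mathfrak n)$; then $(\P^\eps,\nu^\circ)\in\mathcal{N}_A$, and comparison of BSDEs again gives $\mathcal{Y}^{\P^\eps,\alpha,\nu^\circ}_0\le\mathcal{G}^\alpha_0+C\eps\le Y_0+(C+1)\eps$, so $u^A_0(\xi,\alpha)\le Y_0+(C+1)\eps$. Letting $\eps\downarrow0$ and taking the supremum over $\alpha$ gives $U_0^A(\xi)\le Y_0$; together with the lower bound this proves \eqref{resolution:pgAgent} (using $\E^{\P}[Y_0]=Y_0$, $Y_0$ being $\mathcal{P}_A$-q.s.\ constant) and shows $\alpha^\star$ is optimal.

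It remains to characterise optimal triples. If $(\alpha^\star,\P^\star,\nu^\star)\in\mathcal{A}\times\mathcal{N}_A$ satisfies $(i)$ and $(ii)$, then running the lower-bound computation with this triple, the source term $F(\cdot,Y,Z,\alpha^\star,\nu^\star)-F^\star(\cdot,Y,Z,\widehat{\sigma}^2)$ vanishes $dt\otimes\P^\star$-a.e.\ (by $(i)$) and $K\equiv0$ $\P^\star$-a.s.\ (by $(ii)$), so $\delta Y_0=0$, i.e.\ $\E^{\P^\star}[\mathcal{K}^{\alpha^\star,\nu^\star}_{0,T}U_A(\xi)-\int_0^T\mathcal{K}^{\alpha^\star,\nu^\star}_{0,s}c(s,X,\alpha^\star_s)\,ds]=Y_0=U_0^A(\xi)$; since $(i)$ also makes the lower-bound estimate valid for $\alpha^\star$, this forces $u^A_0(\xi,\alpha^\star)=U_0^A(\xi)$, so $\alpha^\star\in\mathcal{A}^\star(\xi)$ and $(\P^\star,\nu^\star)\in\mathcal{N}_A^\star(\xi)$. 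Conversely, if the triple is optimal then $\E^{\P^\star}[\cdots]=Y_0$, hence $\delta Y_0=0$; an optimal effort must realise the outer supremum $dt\otimes\P^\star$-a.e.\ (by a localisation/perturbation argument), so the source term is $\ge0$, and the integral $\int_0^T\mathcal{K}^{\alpha^\star,\nu^\star}_{0,s}\,dK_s\ge0$; both must therefore vanish, giving $K_T=0$ $\P^\star$-a.s.\ (that is $(ii)$) and $F(\cdot,Y,Z,\alpha^\star,\nu^\star)=F^\star(\cdot,Y,Z,\widehat{\sigma}^2)$ $dt\otimes\P^\star$-a.e.\ (that is $(i)$). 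I expect the real difficulties to be concentrated in the two measurable selections (requiring $V_t(x,\Sigma)$ to be a measurable compact-valued correspondence and $F$ Carath\'eodory), in the admissibility and well-posedness verifications (true-martingale property of \eqref{alphamartingale} for $\alpha^\star$, solvability of the stochastic-Lipschitz BSDEs $\mathcal{Y}^{\P,\alpha,\nu}$ and $\mathcal{G}^\alpha$), both resting on Lemma \ref{lemmagrowth} and the exponent bound of Assumption $\mathbf{(S)}$, and in the bookkeeping of the measure relative to which $(i)$ is asserted, exactly as in \cite{cvitanic2017dynamic}.
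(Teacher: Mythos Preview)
Your proposal is correct and reaches the result, but it is organised differently from the paper's proof, and the difference is worth noting.

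The paper layers the argument through \emph{intermediate controlled 2BSDEs}. It first introduces, for each $(\alpha,\nu)$, a 2BSDE $(Y^{\alpha,\nu},Z^{\alpha,\nu},K^{\alpha,\nu})$ with generator $F(\cdot,\alpha,\nu)$ and, for each $\alpha$, a 2BSDE $(Y^\alpha,Z^\alpha,K^\alpha)$ with generator $\inf_{\mathfrak n\in V_s}F(\cdot,\alpha,\mathfrak n)$. It then invokes 2BSDE comparison (inherited from BSDE comparison) to obtain the pointwise identity $Y_0=\esup_\alpha\einf_\nu Y_0^{\alpha,\nu}$ directly, \emph{without} having to construct an optimiser $\alpha^\star$ or select $\nu^\circ$. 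Only afterwards does it linearise to get the representation $Y_0^{\alpha,\nu}=\einf_{\P'}\mathcal Y_0^{\P',\alpha,\nu}$ and the Feynman--Kac formula for $\mathcal Y_0^{\P,\alpha,\nu}$ via Girsanov.

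You instead bypass the controlled 2BSDEs altogether and compare each linear BSDE $\mathcal Y^{\P,\alpha,\nu}$ directly with the full 2BSDE $Y$, producing the two-sided estimate by hand. This is more elementary (you never need comparison at the 2BSDE level) but it forces you to do the work that the paper outsources to comparison theorems: you need the measurable maximiser $\alpha^\star$ for the lower bound and the measurable $\eps$-minimiser $\nu^\circ$ for the upper bound, together with the admissibility of $\alpha^\star$ and the solvability of the auxiliary BSDE with generator $G^\alpha$. All of these are available under Assumption $\mathbf{(S)}$ (compactness of $A$ and $N$, continuity of $F$, Lemma \ref{lemmagrowth}), so there is no gap, but the proof becomes heavier on selection and verification. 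The paper's route buys cleanliness at the cost of more 2BSDE machinery; yours buys directness at the cost of explicit constructions. For the characterisation of optimal triples your argument (read off when the inequalities saturate) is essentially the same as the paper's final paragraph.
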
 

\vspace{0.5em}\noindent
To conclude the section, let us comment the intuition behind this result and the limitations of our model. 

\begin{Remark}
If the volatility of the outcome process is fixed and the Agent controls only the drift, it is well-known that his value function is the solution to a BSDE. The worst-case approach of the Agent makes his value function be the infimum of BSDEs and therefore the solution to a 2BSDE. This reasoning works because the Agent controls only the drift and not the volatility of the outcome. Indeed, by considering a controlled volatility coefficient $\sigma(t,x,\alpha,\nu)$, the worst-case approach of the Agent induces a first 2BSDE and the control $\alpha$ induces a second 2BSDE on top of that. Currently, such kind of 2BSDEs has not been studied in the literature.
\end{Remark}

\section{The Principal's Problem}\label{section:principal}

In this section, we aim at solving the contracting problem \eqref{pb:Principal}. This corresponds to an extension of both \cite{cvitanic2017dynamic} to the uncontrolled volatility case and \cite{mastrolia2016moral} in a more general model, without assuming that a dynamic programming principle holds for the value function of the Principal. We follow the ideas of \cite{bayraktar2012stochastic,bayraktar2013stochastic,sirbu2014stochastic}.

\subsection{A pathological stochastic control problem}  \label{section:pathological}

To facilitate the understanding of this section, we provide a general overview of the method we use, dividing it in the following steps.

\vspace{.5em}
\textbf{Step 1.} In Section \ref{section:setadmissiblecontractrewritten}, we rewrite the set of admissible contracts and the Principal's problem \eqref{pb:Principal} making use of the results obtained in Section \ref{section:Agent-problem}. We also make a distinction between the case in which the estimation sets of the Principal and the Agent are disjoint and the case in which they are not.

\vspace{.5em}
\textbf{Step 2.} In Section \ref{section:degenerate}, we show that if the beliefs of the Principal and the Agent are disjoints, there is a degeneracy in the sense that the Principal can propose to the Agent a sequence of admissible contracts such that asymptotically she gets her maximal utility.

\vspace{.5em}
\textbf{Step 3.} We solve next the problem of the Principal in Section \ref{section:pbP} when the beliefs about the volatility of the Principal and the Agent are not disjoint by restricting the study to piece-wise constant controls and by using Perron's method.
\vspace{.5em}

In the following, we suppose that $\mathbf{(S)}$ and the next assumption are enforced. 

\begin{Assumption}[Markovian case]\label{assumption:markov} All the objects considered are Markovian, \textit{i.e.} they depend on $(t,X_\cdot)$ only through $(t,X_t)$.
\end{Assumption}

\begin{Remark}
Assumption \ref{assumption:markov} may be removed if we deal with the theory of path dependent PDEs (see among others \cite{ekren2014viscosity, ren2014}). Here, we assume that it holds for the sake of simplicity and to focus on the procedure to solve the Principal's problem.
\end{Remark}

\subsection{The problem and remark on the set of admissible contracts} \label{section:setadmissiblecontractrewritten}
The solution to the problem of the Agent provides a very particular form for $U_A(\xi)$. More precisely, let $(Y,Z,K)$ be the solution of 2BSDE \eqref{eq:2bsde}, then
\begin{equation} \label{eq:UA-xi}
U_A(\xi)=Y_0-\int_0^T F^\star(s,X_s,Y_s,Z_s,\hat{\sigma}_s^2)ds+\int_0^TZ_s\cdot dX_s+\int_0^TdK_s,~ \mathcal P_A-q.s.,
\end{equation}
the process $K$ satisfies the minimality condition \eqref{eq:minimality condition}, and
$$
\sup_{\alpha \in \mathcal{A}}~\underset{(\P,\nu)\in\mathcal N^\alpha_A }{\inf}   \mathbb{E}^\mathbb{P} \left[ Y_0 \right] \geq R_0.
$$
Let us define the set of $\mathcal{F}_0-$measurable random variables
$$
\mathbb Y_0:= \left\{Y_0,\;   \sup_{\alpha \in \mathcal{A}}~\underset{(\P,\nu)\in\mathcal N^\alpha_A }{\inf}   \mathbb{E}^\mathbb{P} \left[ Y_0 \right] \geq R_0\right\}.
$$
Then, for any contract $\xi\in \Xi$ there exists a triplet $(Y_0,Z,K)\in \mathbb{Y}_0\times  \mathbb H^p_0(\mathbb F^{\mathcal N_A}, \mathcal P_A) \times  \mathbb K^p_0(\mathbb F^{\mathcal N_A}, \mathcal P_A)$ such that \eqref{eq:minimality condition} and \eqref{eq:UA-xi} hold. Since such a triplet is unique, we can establish a one-to-one correspondence between the set of admissible contracts $\Xi$ and an appropriate subset of $\mathbb{Y}_0\times  \mathbb H^p_0(\mathbb F^{\mathcal N_A}, \mathcal P_A) \times  \mathbb K^p_0(\mathbb F^{\mathcal N_A}, \mathcal P_A)$. However, as explained in \cite{mastrolia2016moral}, decomposition \eqref{eq:UA-xi} only holds $\mathcal P_A-$quasi surely and we have to take this fact into account in order to provide a suitable characterization of the set of admissible contracts by means of this formula.

\vspace{.5em}
For any $(Y_0,Z,K)\in \mathbb Y_0\times  \mathbb H^p_0(\mathbb F^{\mathcal N_A}, \mathcal P_A) \times  \mathbb K^p_0(\mathbb F^{\mathcal N_A}, \mathcal P_A)$ such that $K$ satisfies \eqref{eq:minimality condition} and every $(\mathbb P,t)\in \mathcal{P}_A \times [0,T]$, we define the process $Y^{Y_0,Z,K}$  for any $\mathbb P\in \mathcal P_A$ by
\begin{equation}\label{dyn:valuefunction:Agent}Y^{Y_0,Z,K}_t:= Y_0-\int_0^t F^\star(s,X_s,Y^{Y_0,Z,K}_s,Z_s,\hat{\sigma}_s^2)ds+\int_0^tZ_s\cdot dX_s+\int_0^tdK_s, ~ \mathbb{P}-a.s.
\end{equation}

Recall that since $k$ is bounded, $F^\star$ is Lipschitz with respect to $y$, thus $Y^{Y_0,Z,K}$ is well defined. The definition is independent of the probability $\mathbb{P}$ because the stochastic integrals can be defined pathwise (see \cite[Definition 3.2]{cvitanic2017dynamic} and the paragraph which follows). 

\vspace{.5em}
Fix now $Y_0\in\mathbb Y_0$ and let $\mathcal K_{Y_0}$ be the set of pairs $(Z,K) \in \mathbb H^p_0(\mathbb F^{\mathcal N_A}, \mathcal P_A) \times  \mathbb K^p_0(\mathbb F^{\mathcal N_A},\mathcal P_A)$ sufficiently integrable such that $U_A^{-1}(Y_T^{Y_0,Z,K})\in \mathfrak{C}_{\mathcal{P}_A}$ and with $K$ satisfying \eqref{eq:minimality condition}. The Principal has thus to propose a contract with the form $U_A^{-1}(Y^{Y_0,Z,K}_T)$ under every probability measure in the space $\mathcal P_A$. Outside of the support of this space, the Principal is completely free on the salary given to the Agent.

\vspace{0.3em}
\noindent We denote by $\mathcal D$ the set of $\mathcal F_T-$measurable random variables $\xi$ such that 
\begin{equation}\label{decompXi}\xi=\begin{cases}
\displaystyle U_A^{-1}(Y_T^{Y_0,Z,K}),\ \mathcal P_A-q.s.,\\
\displaystyle \widehat\xi,\ \mathcal P_P\backslash\mathcal P_A-q.s.,
\end{cases}
\end{equation}
for some triplet $(Y_0,Z,K)\in \mathbb Y_0\times \mathcal K_{Y_0}$ and some $\widehat{\xi}\in\mathfrak{C}_{\mathcal{P}_P\setminus\mathcal{P}_A}$. The integrability conditions imposed on $Z$, $K$ and $\widehat{\xi}$ ensure us that $\mathcal{D}\subset\Xi$. In fact, from the reasoning given in the paragraphs above we have that $\mathcal{D}$ coincides with $\Xi$ and \eqref{decompXi} corresponds to a characterization of the set of admissible contracts. Therefore, the problem of the Principal \eqref{pb:Principal} becomes

\begin{equation}\label{pb:Principal:general} U_0^P=\sup_{(Y_0,Z,K, \widehat \xi)\in{\mathbb Y_0\times \mathcal K_{Y_0}} \times  \mathfrak C_{\mathcal P_P\setminus \mathcal P_A}} \, U_0^P(U_A^{-1}(Y_T^{Y_0,Z,K}),\widehat{\xi}),
\end{equation}
with the following slight abuse of notations
$$
U_0^P(\mathcal{X},\widehat{\xi}):= \min\left\{   \inf_{(\P,\nu)\in\mathcal N_P^{\alpha^\star(\mathcal{X})}\cap \mathcal N_A^{\alpha^\star(\mathcal X)}} \, \mathbb E^\mathbb P\left[U_P(L(X_T)-\mathcal X) \right], ~
\inf_{(\P,\nu)\in\mathcal N_P^{\alpha^\star(\mathcal X)}\setminus \mathcal N_A^{\alpha^\star(\mathcal X)}} \, \mathbb E^\mathbb P\left[U_P(L(X_T)-\widehat\xi) \right] \right\}.
$$

\subsection{Degeneracies for disjoint believes}\label{section:degenerate}
Similarly to the study made in \cite[Section 4.3.1.]{mastrolia2016moral}, if the believes of the Agent and the Principal are disjoint, we face a pathological case caused by the fact that the Agent and the Principal do not somehow live in the same world. Indeed, if $\mathcal P_A \cap \mathcal P_P=\emptyset$ we have

\begin{equation}\label{pb:Principal:degenerate} U_0^P=\sup_{(Y_0,Z,K, \widehat\xi)\in \mathbb Y_0\times \mathcal K_{Y_0} \times \mathfrak C_{\mathcal P_P}} \, \inf_{(\P,\nu)\in\mathcal N_P^{\alpha^\star (\mathcal X)}} \, \mathbb E^\mathbb P\left[U_P(L(X_T)-\widehat\xi) \right],
\end{equation}
with $\mathcal X=U_A^{-1}(Y_T^{Y_0,Z,K})$. We then have the following proposition.
\begin{Proposition}
If $\mathcal P_P\cap\mathcal P_A=\emptyset$ then $U_0^P=\lim_{x\to\infty} U_P(x)$.
\end{Proposition}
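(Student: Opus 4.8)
The plan is to establish the two inequalities $U_0^P\le\lim_{x\to\infty}U_P(x)$ and $U_0^P\ge\lim_{x\to\infty}U_P(x)$, starting from the reduced form \eqref{pb:Principal:degenerate} and exploiting what the discussion preceding it already records: on $\mathcal P_P\setminus\mathcal P_A=\mathcal P_P$ the Principal is completely free to fix the remuneration given to the Agent. Since $U_P$ is continuous, increasing, and defined on all of $\R$, one has $\lim_{x\to\infty}U_P(x)=\sup_{x\in\R}U_P(x)\in(-\infty,+\infty]$. The upper bound is then immediate: for every admissible tuple $(Y_0,Z,K,\widehat\xi)$ and every $(\P,\nu)\in\mathcal N_P^{\alpha^\star(\mathcal X)}$ we have $U_P(L(X_T)-\widehat\xi)\le\sup_{x\in\R}U_P(x)$ pointwise, hence $\E^\P[U_P(L(X_T)-\widehat\xi)]\le\sup_xU_P(x)$; passing to the infimum over $(\P,\nu)$ and then the supremum over the tuple in \eqref{pb:Principal:degenerate} gives $U_0^P\le\lim_{x\to\infty}U_P(x)$.

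For the reverse inequality I would exhibit an explicit maximizing sequence. Assume $\Xi\neq\emptyset$ (otherwise $U_0^P=-\infty$ and the model is trivially degenerate) and fix $\xi_0\in\Xi$; by the one-to-one correspondence $\Xi=\mathcal D$ of Section~\ref{section:setadmissiblecontractrewritten}, $\xi_0$ is attached through \eqref{decompXi} to a triplet $(Y_0,Z,K)\in\mathbb Y_0\times\mathcal K_{Y_0}$, which I freeze once and for all, and I write $\mathcal X:=U_A^{-1}(Y_T^{Y_0,Z,K})$. For $j\in\N$ set $\widehat\xi_j:=L(X_T)-j$. Because $L$ has linear growth and $\sigma$ is bounded, $X_T$ has moments of every order under all $\P\in\mathcal P_P\subset\mathcal P_S$, so $\widehat\xi_j$ carries the integrability required to belong to $\mathfrak C_{\mathcal P_P\setminus\mathcal P_A}$; moreover $U_P(L(X_T)-\widehat\xi_j)=U_P(j)$ is constant, so $\E^\P[U_P(L(X_T)-\widehat\xi_j)]=U_P(j)<+\infty$ as needed. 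Thus $(Y_0,Z,K,\widehat\xi_j)$ is an admissible competitor in \eqref{pb:Principal:degenerate}; since $\mathcal N_P^{\alpha^\star(\mathcal X)}$ is non-empty (being the image of $\mathcal N_P$ under the Girsanov change of measure) and $L(X_T)-\widehat\xi_j\equiv j$, we obtain
\begin{equation*}
U_0^P\;\ge\;\inf_{(\P,\nu)\in\mathcal N_P^{\alpha^\star(\mathcal X)}}\E^\P\bigl[U_P(L(X_T)-\widehat\xi_j)\bigr]\;=\;U_P(j).
\end{equation*}
Letting $j\to\infty$ yields $U_0^P\ge\sup_jU_P(j)=\lim_{x\to\infty}U_P(x)$, and combined with the upper bound this proves $U_0^P=\lim_{x\to\infty}U_P(x)$, with the convention that both sides may be $+\infty$.

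I expect the only delicate points to be bookkeeping rather than analytic: one must check that $\Xi\neq\emptyset$ so that the reference triplet $(Y_0,Z,K)$, and hence $\mathcal X$, is available, and that swapping the free part $\widehat\xi$ of an admissible contract for $\widehat\xi_j$ still produces an element of $\mathcal D=\Xi$. The latter is exactly what the characterization \eqref{decompXi} provides: the Agent's value, his optimal-effort set, and therefore the family $\mathcal N_P^{\alpha^\star(\mathcal X)}$, depend only on the $\mathcal P_A$-part $U_A^{-1}(Y_T^{Y_0,Z,K})$ of the contract and are untouched by the choice of $\widehat\xi_j$. Note that the specific choice $\widehat\xi_j=L(X_T)-j$, which absorbs the liquidation value into the contract on $\mathcal P_P$, makes the Principal's conditional payoff a deterministic constant $U_P(j)$ and so bypasses any need for uniform integrability estimates over the belief family $\mathcal N_P$.
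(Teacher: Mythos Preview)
Your argument is correct and follows essentially the same route as the paper: both fix a reference triplet $(Y_0,Z,K)$ on the $\mathcal P_A$-side and take $\widehat\xi=L(X_T)-M$ on $\mathcal P_P$, so that the Principal's payoff collapses to the constant $U_P(M)$ and one lets $M\to\infty$; the upper bound, which the paper simply calls ``trivial'', you spell out explicitly. Your additional remarks on admissibility and on the independence of $\alpha^\star(\mathcal X)$ from $\widehat\xi$ are helpful bookkeeping but do not change the underlying proof.
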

\begin{proof}
Let $M$ be any positive integer and define $\widehat{\xi}^M:=L(X_T)-M$. Take any $(Y_0,Z,K)\in \mathbb{Y}_0 \times \mathcal{K}_{Y_0}$ and set the admissible contract
\begin{equation*}\xi^M:=\begin{cases}
\displaystyle U_A^{-1}(Y_T^{Y_0,Z,K}),\ \mathcal P_A-q.s.,\\
\displaystyle \widehat{\xi}^M,\ \mathcal P_P-q.s.
\end{cases}
\end{equation*}
Then, we have 
$$
U_0^P\geq  \inf_{(\P,\nu)\in \mathcal N_P^{\alpha^\star(\mathcal X)} } \, \mathbb E^\mathbb P\left[U_P(L(X_T)-L(X_T)+M) \right] = U_P(M).
$$
By making $M\to\infty$ we conclude, since the other inequality is trivial. 
\end{proof}

\paragraph{Interpretation.} This result is the same as in \cite[Proposition 4.2]{mastrolia2016moral}. Since the Agent does not see the random variables defined outside of his set of beliefs $\mathcal P_A$, the Principal is completely free on the design of the contract on $\mathcal P_P$. Thus, the Principal can offer a contract which satisfies the reservation utility constraint on $\mathcal P_A$ and which attains asymptotically her maximal utility on $\mathcal P_P$. By doing this the Principal cancels all her risk. This situation is not realistic, since a Principal should not hire an Agent with a completely different point of view on the market behaviour.

\subsection{The Principal's problem with common believes.}\label{section:pbP}

We now turn to a more realistic situation and we study the problem when $\mathcal P_A \cap \mathcal P_P\neq \emptyset$. In this case, as showed in \cite[Proposition 4.3]{mastrolia2016moral},  \eqref{pb:Principal:general} becomes

\begin{equation}\label{pb:Principal:general:intersection} U_0^P=\sup_{Y_0\in \mathbb Y_0} U_0^P(Y_0),
\end{equation}
with the abuse of notation
\begin{equation}\label{pb:Principal:sansY0}
U_0^P(Y_0):= \sup_{(Z,K)\in{\mathcal K_{Y_0}}}  \inf_{(\P,\nu)\in\mathcal N_P^{\alpha^\star(\mathcal X)}\cap \mathcal N_A^{\alpha^\star(\mathcal X)}} \, \mathbb E^\mathbb P\left[U_P\left(L(X_T)-U_A^{-1}(Y_T^{Y_0,Z,K})\right) \right],
\end{equation}
with $\mathcal X=U_A^{-1}(Y_T^{Y_0,Z,K})$.

\subsubsection{A natural restriction to piece-wise constant controls}
 As explained in \cite{sannikov2008continuous}, then in \cite{cvitanic2014moral,cvitanic2017dynamic}, the problem \eqref{pb:Principal:sansY0} coincides with the weak formulation of a (non standard) zero-sum stochastic differential game with the following characteristics
\begin{itemize}
\item control variables: $(Z,K)\in \mathcal K_{Y_0}$ for the Principal and $(\mathbb P,\nu)\in \mathcal N_P^{\alpha^\star(\mathcal X)}\cap \mathcal N_A^{\alpha^\star(\mathcal X)}$ for the Nature,
\item state variables: the output process $X^{x,\Theta}$ and the continuation utility of the Agent $Y^{y,\Theta}$, with dynamic given for any $t\leq s\leq T$, $\mathbb P-a.s.,$ by

\begin{equation}\label{system:sde}\begin{cases}
\displaystyle X^{t,x,\Theta}_s=x+\int_t^s b\left(r,X^{t,x,\Theta}_r,\alpha^\star\left(\mathcal X)\right), \nu_r\right) dr+\int_t^s \sigma(r,X^{t,x,\Theta}_r,\nu_r) dW^{\alpha^\star\left(\mathcal X\right)}_r \vspace{0.3em},\\ 
\displaystyle Y^{t,y,\Theta}_s= y+\int_t^s Z_r\cdot b\left(r,X^{t,x,\Theta}_r,\alpha^\star\left(\mathcal X)\right), \nu_r\right)- F^\star(r,X^{t,x,\Theta}_r,Y^{t,y,\Theta}_r,Z_r,\hat{\sigma}_r^2)dr\vspace{0.3em}\\
\displaystyle \hspace{5.3em}+\int_t^sZ_r\cdot  \sigma(r,X^{t,x,\Theta}_r,\nu_r) dW^{\alpha^\star(\mathcal X)}_r+\int_t^sdK_r,
\end{cases}\end{equation}
with $\Theta\equiv (Z,K,\nu)$ and $\mathcal X=U_A^{-1}(Y_T^{Y_0,Z,K})$.\end{itemize}
We now fix an arbitrary $Y_0\in\mathbb{Y}_0$ and turn to the procedure to solve \eqref{pb:Principal:sansY0}. The main issue is that the class of controls $\mathcal K_{Y_0}$ is too general since, as explained in \cite[Section 4.3.2]{mastrolia2016moral} and \cite{cvitanic2014moral,cvitanic2017dynamic}, the non-decreasing process $K$ impacts the dynamic of $Y^{Y_0,Z,K}$ only throught the minimality condition \eqref{eq:minimality condition} and more information on this process is required to solve the problem. As emphasized in \cite[Remark 3.4]{sirbu2014stochastic}, we need to consider \textit{piecewise controls} and restrict our investigation on elementary strategies. This issue is intrinsically linked to the fact that we are looking for a zero-sum game between the Principal and the Nature. We now consider a restricted set of controls piece-wise constant included in $\mathcal K_{Y_0}$. 
\begin{Definition}[Elementary controls starting at a stopping time]
Let $t\in [0,T]$ and $\tau$ a stopping time $\mathcal G_s^t$-adapted for any $s\in[t, T]$. We say that an $\mathbb R^d\times \mathbb R^+$-valued process $(Z,K)$ (resp.  $\nu\in \mathfrak N$) is an elementary control starting at $\tau$ for the Principal (resp. the Nature) if there exist
\begin{itemize} 
\item a finite sequence $(\tau_i)_{0\leq i\leq n}$ of $\mathcal F_t$-adapted stopping times such that
$$ \tau=\tau_0\leq \dots\leq \tau_n=T,$$
\item a sequence $(z_i,k_i)_{1\leq i\leq n}$ of $\mathbb R^d\times \mathbb R^+$-valued random variables such that $z_i,k_i$ are $\mathcal F_{\tau_{i-1}}^t-$measurable and
$$Z_t=\sum_{i=1}^n z_i \mathbf{1}_{\tau_{i-1}<t\leq \tau_i},\; K_t=\sum_{i=1}^n k_i \mathbf{1}_{\tau_{i-1}<t\leq \tau_i},$$
resp. a sequence $(n_i)_{1\leq i\leq n}$ of $N$-valued random variables such that $n_i$ is $\mathcal F_{\tau_{i-1}}^t-$measurable and
$$\nu_t=\sum_{i=1}^n n_i \mathbf{1}_{\tau_{i-1}<t\leq \tau_i}.$$
\end{itemize}
We denote by $\mathcal U(t,\tau)$ (resp. $\mathcal V(t,\tau)$) the set of elementary controls of the Principal (resp. the Nature). If $\tau=t=0$, we just write $\mathcal U$ (resp. $\mathcal V$). 
\end{Definition}
 
We now set
 $$\mathcal U_{Y_0}:=\mathcal K_{Y_0}\cap \mathcal U, $$
and for any $(Z,K)\in \mathcal U_{Y_0}$
$$\mathcal V_{Y_0,Z,K}:= \left\{ (\P,\nu)\in\mathcal N_P^{\alpha^\star(\mathcal X)}\cap \mathcal N_A^{\alpha^\star(\mathcal X)} \Big| \nu\in \mathcal V\right\}. $$  We thus consider the following restricted problem

\begin{equation}\label{pb:Principal:general:intersection:piecewise} V_0^P=\sup_{Y_0\in \mathbb Y_0} V_0^P(Y_0),
\end{equation}
with the abuse of notation
\begin{equation}\label{pb:Principal:sansY0:piecewise}
V_0^P(Y_0):= \sup_{(Z,K)\in{\mathcal U_{Y_0}}}  \inf_{(\P,\nu)\in \mathcal V_{Y_0,Z,K}} \, \mathbb E^\mathbb P\left[U_P\left(L(X_T)-U_A^{-1}(Y_T^{Y_0,Z,K})\right) \right].
\end{equation}

The literature, an more particularly \cite{sirbu2014stochastic, pham2014two, soner2013dual}, leads us to expect to get $U_0^P=V_0^P$ for particular cases in view of the related papers dealing with this kind of problems. In other words, in some cases, the value of the general problem \eqref{pb:Principal:general:intersection} coincides with its restriction \eqref{pb:Principal:general:intersection:piecewise} to piecewise defined controls. We thus will focus on the restricted problem \eqref{pb:Principal:general:intersection:piecewise} in the following, that we solve completely.

\subsubsection{The intuitive HJBI equation}\label{section:hjbi}

\noindent  
Assumption \textbf{(PPD)} in \cite{mastrolia2016moral} seems to be too complicated to prove\footnote{Another approach not considered in this paper, may consist in proving a \textit{weak} dynamic programming principle by following \cite{bouchard2011weak, bayraktar2013yao}.} for a general class of processes $K$, since it requires a deep study of the measurability of the dynamic version of the value function associated with the problem \eqref{pb:Principal:general:intersection:piecewise}. To avoid this difficulty linked directly to the ambiguity on the volatility of the model, we will deal with the so-called Perron's method by following the same ideas as in \cite{bayraktar2012stochastic,bayraktar2014dynkin,bayraktar2013stochastic,sirbu2014stochastic}.  Recall that if one aims at associating \eqref{pb:Principal:sansY0} with an HJBI equation, as usual in the stochastic control theory, the problem seems to be ill-posed and we need more information on the process $K$. We thus expect to have an optimal contract $\xi:=U_A^{-1}(Y_T^{Y_0,Z,K})$ for which the process $K$ is absolutely continuous. More exactly, and by following \cite[Remark 5.1]{cvitanic2017dynamic} we expect to get an optimal contract in a the subspace of contracts for which there exists a $\mathbb G^{\mathcal N_A}$-predictable process $\Gamma$ with values in $\mathcal M_{d,d}(\mathbb R)$ such that 

\begin{equation}\label{decomp:K}K_t =\int_0^t \left(F^\star(s,Y_s,Z_s,\widehat\sigma^2_s) +\frac12\text{Tr}\left( \widehat\sigma^2_s \Gamma_s\right) - H(s,X_s,Y_s,Z_s,\Gamma_s) \right)ds.
\end{equation}

This intuition leads us to set the following Hamiltonian function $G:[0,T]\times \mathbb R^d\times \mathbb R\times \mathbb R^d\times \mathbb R\times \mathcal S^{d,d}\times \mathbb R\times \mathbb R^d \longrightarrow \mathbb R$ defined by

$$G(t,x,y,p,\tilde p,q,\tilde q,r):=\sup_{(z,\gamma)\in \mathbb R^d \times \mathcal M_{d,d}(\mathbb R)} \inf_{\mathfrak n\in N} g(t,x,y,p,\tilde p,q,\tilde q,r,z,\gamma,\mathfrak n) ,$$
where 
\begin{align*}
g(t,x,y,p,\tilde p,q,\tilde q,r,z,\gamma, \mathfrak n)&:=p\cdot b(t,x,\alpha^\star(t,x,y,z,\widehat\sigma_t),\mathfrak n) +\frac12 \text{Tr}\left(\sigma(t,x,\mathfrak n)\sigma(t,x,\mathfrak n)^\top q \right)\\
&+\tilde p \left(\frac12\text{Tr}\left( \sigma(t,x,\mathfrak n)\sigma(t,x,\mathfrak n)^\top \gamma\right)- H(t,x,y,z,\gamma)\right)\\
&+ \tilde p \, b(t,x,\alpha^\star(t,x,y,z,\widehat\sigma_t),\mathfrak n)\cdot z +   \text{Tr}\left( z^\top \sigma(t,x,\mathfrak n) \sigma(t,x,\mathfrak n)^\top r  \right) \\
&+\frac12 \tilde q ~ \text{Tr}\left( z^\top \sigma(t,x,\mathfrak n)  \sigma(t,x,\mathfrak n)^\top z  \right).
\end{align*}
We can now set the HJBI equation which is hopefully strongly connected to the problem of the Principal \eqref{pb:Principal:sansY0:piecewise}
\begin{equation}\label{HJBI}\begin{cases}
\displaystyle -\partial_t u(t,x,y) - G(t,x,y,\nabla_x u,\partial_y u,\Delta_{xx} u ,\partial_{yy}u ,\nabla_{xy}  u) = 0,\; (t,x,y)\in [0,T)\times \mathbb R^d\times \mathbb R\\
\displaystyle u(T,x,y)=  U_P(L (x)-U_A^{-1}(y)).
\end{cases}
\end{equation}

\subsubsection{Reduction to bounded controls}\label{section:remarkHJBI}
In this section, we study a fundamental property on the Hamiltonian $G$ appearing in the HJBI equation \eqref{HJBI}, in order to simplify the study of the stochastic control problem \eqref{pb:Principal:sansY0:piecewise}. The main difficulty is that the set of controls is unbounded, which can be quite hard to investigate in practice. We thus set an assumption ensuring that the supremum over $(z,\gamma)$ in the definition of $G$ can be reduced to a supremum over a compact set. We show that this assumption holds both for a risk-neutral setting and in the one dimensional case, \textit{i.e.} by assuming that $d=n=1$, with additional growth conditions on the data $b$ and $\sigma$. The reduction of the set of controls is fundamental in the proof of Theorem \ref{main:Principal} below.

\begin{Assumption}\label{continuityR}
For fixed $\tilde p,\tilde q,r\in \mathbb{R}$ and for every $(t,x,y,p,q)\in [0,T]\times \mathbb{R} $ there exists a continuous radius $R:=R(t,x,y,p,q)$ such that
$$
G(t,x,y,p,\tilde p,q,\tilde q,r)= \sup_{|z|\leq R} ~ \sup_{|\gamma|\leq R} ~ \inf_{\mathfrak n\in N} ~ g(t,x,y,p,\tilde p,q,\tilde q,r,z,\gamma,\mathfrak n).
$$

\end{Assumption}

\paragraph{Example 1: Risk-neutral Principal and risk-neutral Agent without discount factor.} 
Assume that both the Principal and the Agent are risk-neutral, \textit{i.e.} $U_P(x)=U_A(x)=x$. We moreover assume that $k\equiv 0$. In this setting, the worst--case measures of both parties coincide, and the problem reduces to a classical drift--control model in the  Principal--Agent literature. In fact, the continuation utility of the Principal at time $t$ is given by
$$u_t^P(x,y):=  \sup_{(Z,K)\in{\mathcal K_{Y_0}}}  \inf_{(\P,\nu)\in\mathcal N_P^{\alpha^\star(\mathcal X)}\cap \mathcal N_A^{\alpha^\star(\mathcal X)}} \, \mathbb E^\mathbb P\left[L(X_T^{t,x,\Theta})-Y_T^{t,y,\Theta} \right], $$
with $x\in \mathbb R^d$ and $y\in \mathbb R$ respectively. Since $k\equiv 0$, it is clear from the system \eqref{system:sde} that $u_t$ is linear with respect to the variable $y$. Roughly speaking, the Hamiltonian $G$ in HJBI equation \eqref{HJBI} is evaluated at $\tilde p=-1$ and $\tilde q=r=0$. Then we have
\begin{align*}
G(t,x,y,p,-1,q,0,0) = & \sup_{(z,\gamma)} \inf_{\mathfrak n\in N} ~ \left( p\cdot b(t,x,\alpha^\star,\mathfrak n) +\frac12 \text{Tr}\left(\sigma\sigma(t,x,\mathfrak n)^\top q \right) \right. \\
&  \hspace{2cm} \left.+ H(t,x,y,z,\gamma) - \frac12\text{Tr}\left( \sigma\sigma(t,x,\mathfrak n)^\top \gamma\right) - b(t,x,\alpha^\star,\mathfrak n)\cdot z \right) \\
\leq & \sup_{(z,\gamma)} \inf_{\mathfrak n\in N}   ~ \left( p\cdot b(t,x,\alpha^\star,\mathfrak n) +\frac12 \text{Tr}\left(\sigma\sigma(t,x,\mathfrak n)^\top q \right) - c(t,x,\alpha^\star) \right) \\
\leq & H(t,x,y,p,q).
\end{align*}

And evaluating the supremum at $z=p$ and $\gamma=q$ we obtain the converse inequality
$$
G(t,x,y,p,-1,q,0,0) \geq H(t,x,y,p,q).
$$
It follows that the optimal controls are $z^\star=p$, $\gamma^\star=q$ and the the infimum is attained for both $H$ and $G$ at the same optimal $\mathfrak n$ denoted by $\nu^\star$. The continuous radius in this case is given by $R(t,x,y,p,q)=\max\{|p|,|q|\}$.

\begin{Remark}
In the risk--neutral problem, an important consequence of the Principal and the Agent having the same worst--case measures is that the the first--best value can be attained. This is a direct consequence, and an extension, of the well known result in the classical drift--control problem.
\end{Remark}

\paragraph{Example 2: One dimensional case.}
Assume that $d=n=1$. The following assumption on the relative growth of the drift $b$, the volatility $\sigma$ and the discount factor $k$ of the output ensures the existence of the continuous radius.
\begin{Assumption} \label{ass:orly}
$b$ and $\sigma$ are continuous functions which satisfy the following properties.
\begin{enumerate}
\item  For every $(t,x,a)\in[0,T]\times\mathbb{R}\times A$ and for every $\underline{\nu}$ global minimum of $\sigma(t,x,\cdot)$, the following limits are finite  
$$
\displaystyle \lim_{\mathfrak n\to \underline{\nu}} ~ \frac{b(t,x,a,\mathfrak n)-b(t,x,a,\underline{\nu})}{\sigma(t,x,\mathfrak n)-\sigma(t,x,\underline{\nu})}, ~ \lim_{\mathfrak n\to \underline{\nu}} ~ \frac{k(t,x,a,\mathfrak n)-k(t,x,a,\underline{\nu})}{\sigma(t,x,\mathfrak n)-\sigma(t,x,\underline{\nu})}.
$$ 
\item For every $(t,x,a)\in[0,T]\times\mathbb{R}\times A$ and for every $\overline{\nu}$ global maximum of $\sigma(t,x,\cdot)$, the following limit is finite 
$$
\displaystyle \lim_{\mathfrak n\to \overline{\nu}} ~ \frac{b(t,x,a,\mathfrak n)-b(t,x,a,\overline{\nu})}{\sigma(t,x,\mathfrak n)-\sigma(t,x,\overline{\nu})}, ~ \lim_{\mathfrak n\to \overline{\nu}} ~ \frac{k(t,x,a,\nu)-k(t,x,a,\overline{\nu})}{\sigma(t,x,\mathfrak n)-\sigma(t,x,\overline{\nu})}.
$$
\end{enumerate}
\end{Assumption}
The proof of the lemma below is postponed to the Appendix \ref{appendix:coercivity}.
\begin{Lemma}\label{lemma:coercivity} \text{ }
 Let Assumption \ref{ass:orly} be satisfied. In addition, assume that $\mathfrak n\mapsto \sigma(t,x,\mathfrak n)$ has a unique minimizer for every $(t,x)$ and $a\mapsto F(t,x,y,z,a,\mathfrak n)$ has a unique maximizer for every $(t,x,y,z,\mathfrak n)$. Then for any $\tilde{q}<0$ and for every $(t,x,y,u,p,\tilde{p},q,r)\in [0,T]\times \mathbb R^7$ Assumption \ref{continuityR} holds.
\end{Lemma}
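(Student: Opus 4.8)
The plan is to use the one-dimensional structure to write $G$ as the supremum over $(z,\gamma)\in\mathbb R^2$ of a continuous function that is coercive towards $-\infty$, and then to take for $R$ any continuous majorant of the diameter of the relevant super-level set. Concretely, since $d=n=1$ I identify $\sigma(t,x,\mathfrak n)$, $z$ and $\gamma$ with real numbers; the effort $\alpha^\star=\alpha^\star(t,x,y,z,\widehat\sigma_t)$ and the Hamiltonian $H$ do not depend on $\mathfrak n$, so collecting the $\mathfrak n$-dependent terms of $g$ gives
$$
g(t,x,y,p,\tilde p,q,\tilde q,r,z,\gamma,\mathfrak n)=(p+\tilde p z)\,b(t,x,\alpha^\star,\mathfrak n)+\sigma^2(t,x,\mathfrak n)\,\Lambda(z,\gamma)-\tilde p\,H(t,x,y,z,\gamma),
$$
with $\Lambda(z,\gamma):=\tfrac12 q+zr+\tfrac12\tilde p\gamma+\tfrac12\tilde q z^2$, hence
$$
\inf_{\mathfrak n\in N}g=-\tilde p\,H(t,x,y,z,\gamma)+\inf_{\mathfrak n\in N}\big\{(p+\tilde p z)\,b(t,x,\alpha^\star,\mathfrak n)+\sigma^2(t,x,\mathfrak n)\,\Lambda(z,\gamma)\big\}.
$$
By compactness of $N$, continuity of the data and the uniqueness hypotheses, $(z,\gamma)\mapsto\inf_{\mathfrak n}g$ is continuous, so it suffices to show it tends to $-\infty$ as $|z|+|\gamma|\to\infty$, locally uniformly in $(t,x,y,p,q)$; then the supremum defining $G$ is attained, is unchanged when restricted to a ball enclosing the super-level set at height $\inf_{\mathfrak n}g(\cdot,0,0,\cdot)$, and the radius of that ball can be chosen continuous, which is Assumption \ref{continuityR}.

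First I would record the elementary bounds. Since $\sigma\sigma^\top(t,x,\cdot)$ is invertible, $N$ compact and $\sigma$ continuous, $\mathfrak n\mapsto\sigma^2(t,x,\mathfrak n)$ attains a minimum $\underline\sigma^2_{t,x}>0$ and a maximum $\overline\sigma^2_{t,x}<\infty$ on $N$; by uniqueness of the minimizer, $(t,x)\mapsto(\underline\nu(t,x),\underline\sigma^2_{t,x},\overline\sigma^2_{t,x})$ is continuous (Berge's theorem), and by uniqueness of the maximizer of $F$ so is $(t,x,y,z)\mapsto\alpha^\star$. By Lemma \ref{lemmagrowth} and Assumption $\mathbf{(S)}$, $\|\alpha^\star\|\le C(1+|z|^{1/(\underline m+1-\ell)})$, whence $\sup_{\mathfrak n\in N}|b(t,x,\alpha^\star,\mathfrak n)|\le C(1+\|x\|+|z|^{\ell/(\underline m+1-\ell)})$ and $|H(t,x,y,z,\gamma)|\le C(1+\|x\|+|y|+|z|^{(\ell+m)/(\underline m+1-\ell)}+|\gamma|)$, with locally bounded constants. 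Moreover $\tfrac{\ell+m}{\underline m+1-\ell}\le2$ together with $\underline m\le\ell+m-1$ and $m\ge\ell$ force $\underline m\ge2\ell-1$, so that $\tfrac{\ell}{\underline m+1-\ell}\le1$ and $1+\tfrac{\ell}{\underline m+1-\ell}\le\tfrac{\ell+m}{\underline m+1-\ell}\le2$: every $z$-dependent term above is at most quadratic, and the $(p+\tilde p z)b$ contribution has order no larger than the $\tilde p H$ one.

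The hard part will be coercivity in $\gamma$, because the leading $\gamma$-linear parts of $\tfrac12\tilde p\sigma^2\gamma$ and of $-\tilde p H$ can cancel identically, so naive linear/quadratic coercivity fails. Fixing $z$ and letting $|\gamma|\to\infty$: in $H=\inf_{\Sigma}\{\tfrac12\Sigma\gamma+F^\star(t,x,y,z,\Sigma)\}$ the optimal $\Sigma$ runs to $\underline\sigma^2_{t,x}$ (as $\gamma\to+\infty$) or to $\overline\sigma^2_{t,x}$ (as $\gamma\to-\infty$), while the minimizing $\mathfrak n$ in $\inf_{\mathfrak n}\{(p+\tilde p z)b+\sigma^2\Lambda\}$ runs to the extremizer of $\sigma^2(t,x,\cdot)$ dictated by the sign of $\Lambda(z,\gamma)\sim\tfrac12\tilde p\gamma$. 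Computing the one-sided limits, when $\tilde p\le0$ these mechanisms send $\mathfrak n$ and $\Sigma$ to opposite extremizers, a term $\tfrac12\tilde p(\overline\sigma^2_{t,x}-\underline\sigma^2_{t,x})\gamma$ survives and $\inf_{\mathfrak n}g\to-\infty$; when $\tilde p>0$ they go to the same extremizer, the $\gamma$-linear terms cancel exactly, and $\inf_{\mathfrak n}g$ has finite limits $\ell_\pm(z)$ whose top-order terms are $\tfrac12\tilde q\underline\sigma^2_{t,x}z^2$ and $-\tilde p\sup_aF(t,x,y,z,a,\underline\nu)$ (resp. at $\overline\nu$), both non-positive at leading order since $\tilde q<0$ and $\tilde p>0$, so $\ell_\pm(z)\to-\infty$ as well. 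Making these limits rigorous, and controlling the rate at which the extremizers are approached (hence the error terms), is exactly where Assumption \ref{ass:orly} enters: the finiteness of the quotients $(b(t,x,a,\mathfrak n)-b(t,x,a,\underline\nu))/(\sigma(t,x,\mathfrak n)-\sigma(t,x,\underline\nu))$ near $\underline\nu$, and the analogues with $k$ and at the maximizers of $\sigma(t,x,\cdot)$, prevent $b$ and $k$ from blowing up faster than $\sigma$ there, keeping the boundary-layer corrections to $F^\star$ and $H$ bounded. I conclude that $\sup_{\gamma\in\mathbb R}\inf_{\mathfrak n}g(\cdot,z,\gamma,\cdot)$ is finite, attained for $|\gamma|\le R_1(t,x,y,z,p,q)$ with $R_1$ continuous, and still tends to $-\infty$ as $|z|\to\infty$.

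Finally, for the $z$-direction I would keep $|\gamma|\le R_1$ and bound $\inf_{\mathfrak n}\{(p+\tilde p z)b+\sigma^2\Lambda\}$ from above by evaluating at $\mathfrak n=\overline\nu(t,x)$ (optimal for the $\sigma^2\Lambda$ part once $\Lambda(z,\gamma)<0$, i.e. for $|z|$ large), which by the previous bounds gives
$$
\inf_{\mathfrak n\in N}g(\cdot,z,\gamma,\cdot)\le\tfrac12\tilde q\,\overline\sigma^2_{t,x}\,z^2+C\big(1+\|x\|+|y|+|z|^{1+\ell/(\underline m+1-\ell)}+|z|^{(\ell+m)/(\underline m+1-\ell)}+R_1\big),
$$
and, arguing as in the $\gamma$-step (the favourable sign of $-\tilde p\sup_aF$ inside $\tilde p H$, which carries the largest exponent $\le2$, together with the negative $\tfrac12\tilde q\,\overline\sigma^2_{t,x}z^2$ term), the right side $\to-\infty$ as $|z|\to\infty$. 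Hence $G<\infty$ and the outer supremum is attained for $|z|\le R_2(t,x,y,p,q)$, continuous. Taking $R(t,x,y,p,q):=\max\{R_2(t,x,y,p,q),\,\sup_{|z|\le R_2}R_1(t,x,y,z,p,q)\}$ — continuous because $\underline\sigma^2_{t,x}$, $\overline\sigma^2_{t,x}$, $\alpha^\star$, the constants of Lemma \ref{lemmagrowth}, $R_1$ and $R_2$ all are — yields $G(t,x,y,p,\tilde p,q,\tilde q,r)=\sup_{|z|\le R}\sup_{|\gamma|\le R}\inf_{\mathfrak n\in N}g$, i.e. Assumption \ref{continuityR}. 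The only genuinely nonroutine ingredient is the $\gamma$-analysis above; the rest is polynomial bookkeeping under Assumption $\mathbf{(S)}$.
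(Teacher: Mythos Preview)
Your overall strategy (split by the sign of $\tilde p$, show coercivity/constancy in $\gamma$, then in $z$, and invoke Berge for continuity) is the same as the paper's, but the execution of the $\tilde p>0$ case has a genuine gap.

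For $\tilde p>0$ you argue that as $\gamma\to\pm\infty$ the optimal $\Sigma$ in $H$ and the optimal $\mathfrak n$ in $\inf_{\mathfrak n}g$ converge to the \emph{same} extremizer of $\sigma(t,x,\cdot)$, so the two $\gamma$-linear contributions cancel and $\gamma\mapsto\inf_{\mathfrak n}g$ has \emph{finite limits} $\ell_\pm(z)$. From this you jump to ``$\sup_\gamma\inf_{\mathfrak n}g$ is finite, attained for $|\gamma|\le R_1$''. That step does not follow: a continuous function with finite limits at $\pm\infty$ need not achieve its supremum on any compact set (think of $\arctan\gamma$), so restricting $\gamma$ to a ball could strictly lower $G$. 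What the paper does instead is sharper: via an elementary auxiliary lemma (their Lemma \ref{lemma:orly}), Assumption \ref{ass:orly} guarantees that for $\gamma$ beyond some finite threshold the minimizer of $\mathfrak n\mapsto \Lambda(z,\gamma)\sigma^2(t,x,\mathfrak n)+(p+\tilde p z)b(t,x,\alpha^\star,\mathfrak n)$ is \emph{exactly} $\underline\nu$ (not merely approaching it), and the same for $H$; hence $\gamma\mapsto\inf_{\mathfrak n}g$ is \emph{identically constant} past that threshold, and the restriction to $|\gamma|\le R$ is then trivially harmless. This is the role of Assumption \ref{ass:orly}: it is not a ``boundary-layer'' estimate on rates of convergence, as you use it, but the hypothesis of a threshold lemma forcing the exact minimizer for all large $|\gamma|$.

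A secondary issue: your $z$-coercivity argument is more delicate than necessary and, as written, does not close. You invoke the growth bounds of Lemma \ref{lemmagrowth}, which in the borderline case $\frac{\ell+m}{\underline m+1-\ell}=2$ allow the $-\tilde p H$ term to contribute a positive $|z|^2$ that could cancel $\tfrac12\tilde q\,\sigma^2 z^2$; your appeal to ``the favourable sign of $-\tilde p\sup_aF$'' is not justified (for $\tilde p>0$, $-\tilde p H$ is a \emph{supremum}, so bounding it \emph{above} is not immediate). The paper avoids this entirely by using that $A$ is compact: then $\alpha^\star$ is bounded, $b$ is bounded, $F$ and $F^\star$ are at most linear in $z$, hence $H$ is at most linear in $z$ (plus $O(|\gamma|)$), and the strictly negative quadratic $\tfrac12\tilde q\,\sigma^2 z^2$ dominates without any sign analysis.
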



\subsubsection{Perron's method to solve the Principal problem}\label{section:verif}
We now focus on a deep study of PDE \eqref{HJBI}. We assume that $b$ and $\sigma$ are continuous functions and that Assumption \ref{continuityR} holds. 

\vspace{0.5em} \noindent
In this section we drop the assumptions made in \cite{mastrolia2016moral} and we prove a verification result for a non-smooth value function, by following the Stochastic Perron's method introduced by Bayraktar and S\^irbu  \cite{bayraktar2012stochastic,bayraktar2014dynkin,bayraktar2013stochastic,sirbu2014stochastic}. More precisely, we show that the value function of the Principal associated with the problem \eqref{pb:Principal:sansY0:piecewise} is a viscosity solution to the HJBI equation \eqref{HJBI}. The approach we follow avoids to prove (or assume) a dynamic programming principle and only deals with comparison results. Moreover, the dynamic programming principle is a consequence of the used method. We adapt now the definition of stochastic semi-solutions to stochastic differential games \cite{sirbu2014stochastic} to our framework under the weak formulation.   

\begin{Definition}[Stopping rule]
For $t\in[0,T]$, let $(X,Y)$ be the canonical process on $C\left( \left[ t,T \right], \mathbb{R}^{d+1} \right)$. Define the filtration $\mathbb{B}^t=(\mathcal{B}_s^t)_{t\leq s\leq T}$ by
$$
\mathcal{B}_s^t := 	\sigma( (X(u),Y(u)), t\leq u\leq s ), t\leq s\leq T.
$$
$\tau\in C\left( \left[ t,T \right], \mathbb{R}^{d+1} \right)$ is a stopping rule starting at $t$ if it is a stopping time with respect to $\mathbb{B}^t$.
\end{Definition}

\begin{Definition}[Stochastic semisolutions of the HJBI equation]\label{def:sursous}
Let $Y_0\in \mathbb Y_0$.
\begin{itemize}
\item A function $v:[0,T]\times \mathbb R^d\times \mathbb R\longrightarrow\mathbb R$ is a stochastic sub-solution of HJBI equation \eqref{HJBI} if 
\begin{itemize}
\item[(i-)] $v$ is continuous and $v(T,x,y)\leq U_P( L(x)-U_A^{-1}(y))$ for any $(x,y)\in \mathbb R^d\times \mathbb R,$ 
\item[(ii-)]for any $t\in [0,T]$ and for any stopping rule $\tau\in\mathbb{B}^t$, there exists an elementary control $(\tilde Z,\tilde K)\in \mathcal U_{Y_0}(t,\tau)$ such that for any $(Z,K)\in \mathcal U_{Y_0}(t,t)$, any $(\mathbb P,\nu)\in \mathcal V_{Y_0}(t,t)$ and each stopping rule $\rho\in\mathbb{B}^t$ with $\tau\leq \rho\leq T$ we have
\begin{equation}\label{submartingale}
v(\tau', X_{\tau'}, Y_{\tau'}) \leq \mathbb E^{\mathbb P}\left[v(\rho', X_{\rho'}, Y_{\rho'})\big| \mathcal F_{\tau'}^t \right],\, \mathbb P-a.s.,
\end{equation}
where for any $(x,y,\omega)\in \mathbb R^2\times \Omega$, 
\begin{align*}
X&:= X^{t,x,(Z,K)\otimes_{\tau} (\tilde Z,\tilde K),\nu }, ~ Y:=Y^{t,y,(Z,K)\otimes_{\tau} (\tilde Z,\tilde K),\nu}, \\
\tau'(\omega)&:=\tau(X^{t,x,(Z,K)\otimes_{\tau} (\tilde Z,\tilde K),\nu }_\cdot(\omega), Y^{t,y,(Z,K)\otimes_{\tau} (\tilde Z,\tilde K),\nu}_\cdot(\omega)), \\
\rho'(\omega)&:=\rho(X^{t,x,(Z,K)\otimes_{\rho} (\tilde Z,\tilde K),\nu }_\cdot(\omega), Y^{t,y,(Z,K)\otimes_{\tau} (\tilde Z,\tilde K),\nu}_\cdot(\omega)).
\end{align*}
\end{itemize}
We denote by $\mathbb V^-$ the set of stochastic sub-solution of \eqref{HJBI}.

\item A function $v:[0,T]\times \mathbb R^d\times \mathbb R\longrightarrow\mathbb R$ is a stochastic super-solution of HJBI equation \eqref{HJBI} if 
\begin{itemize}
\item[(i+)] $v$ is continuous and $v(T,x,y)\geq U_P(L(x)-U_A^{-1}(y))$ for any $(x,y)\in \mathbb R^d\times \mathbb R$
\item[(ii+)]for any $t\in [0,T]$, for any stopping rule $\tau\in\mathbb{B}^t$ and for any $(Z,K)\in \mathcal U_{Y_0}(t,t)$, there exists an elementary control $(\hat{\mathbb{P}},\tilde{\nu})\in \mathcal V_{Y_0}(t,\tau)$ such that for every $\nu\in\mathcal{V}(t,t)$ satisfying $(\hat{\mathbb{P}},\nu)\in \mathcal V_{Y_0}(t,t)$ and every stopping rule $\rho\in\mathbb{B}^t$ with $\tau\leq \rho\leq T$ we have
\begin{equation}\label{supermartingale}
v(\tau', X_{\tau'}, Y_{\tau'}) \geq \mathbb E^{\hat{\mathbb{P}} }\left[v(\rho', X_{\rho'}, Y_{\rho'})\big| \mathcal F_{\tau'}^t \right],\, \hat{\mathbb{P}} -a.s.,
\end{equation}
where for any $(x,y,\omega)\in \mathbb R^2\times \Omega$, 
\begin{align*}
X&:= X^{t,x,Z,K,\nu \otimes_{\tau} \tilde{\nu} }, ~ Y:=Y^{t,x,Z,K,\nu \otimes_{\tau} \tilde{\nu} }, \\
\tau'(\omega)&:=\tau(X^{t,x,Z,K,\nu \otimes_{\tau} \tilde{\nu} }_\cdot(\omega), Y^{t,x,Z,K,\nu \otimes_{\tau} \tilde{\nu} }_\cdot(\omega)), \\
\rho'(\omega)&:=\rho(X^{t,x,Z,K,\nu \otimes_{\tau} \tilde{\nu} }_\cdot(\omega), Y^{t,x,Z,K,\nu \otimes_{\tau} \tilde{\nu} }_\cdot(\omega)).
\end{align*}
\end{itemize}
We denote by $\mathbb V^+$ the set of stochastic super-solution of \eqref{HJBI}.
\end{itemize}
\end{Definition} 

To apply Perron's method we need the following assumption, assuring the existence of stochastic semi-solutions to the HJBI equation \eqref{HJBI} (see Assumptions 3.4 and 4.3 in \cite{bayraktar2013stochastic}).

\begin{Assumption}\label{ass:non-empty} 
The sets $\mathbb V^+$ and $\mathbb V^-$ are non-empty.
\end{Assumption}

As explained in \cite{bayraktar2012stochastic,bayraktar2014dynkin} the set $\mathbb V^+$ is trivially non empty if the function $U_P$ is bounded by above, whereas $\mathbb V^-$ is non empty if $U_P$ is bounded by below. \vspace{.5em}

Now we follow the stochastic Perron's method proposed in \cite{sirbu2014stochastic}. Let us define 
$$v^-:= \sup_{v\in \mathbb V^-} v,\; v^+:= \inf_{v\in \mathbb V^+} v,\; $$
and notice that we have from Definition \ref{def:sursous} that for any $Y_0\in \mathbb Y_0$
\begin{equation}\label{ineg:v+-V}v^-(0,x,Y_0)\leq  V_0^P(Y_0) \leq v^+(0,x,Y_0). \end{equation} We thus get the main theorem of this section and we refer to the Appendix \ref{appendix:Principal} for the proof.
\begin{Theorem}\label{main:Principal}
$v^-$ is a lower semi-continuous viscosity super-solution of HJBI equation \eqref{HJBI} and $v^+$ is an upper semi-continuous viscosity sub-solution of HJBI equation \eqref{HJBI}.\vspace{0.3em}

Moreover, if there exists a comparison result for HJBI equation \eqref{HJBI}, \textit{i.e.} for any lower semi-continuous viscosity sub-solution $\underline v$ and for any upper semi-continuous viscosity super-solution $\overline v$, we have 
$$\sup_{[0,T]\times \mathbb R^d\times \mathbb R } (\overline v-\underline v)= \sup_{\mathbb R^d\times \mathbb R } (\overline v(T,\cdot)-\underline v(T,\cdot)),$$ then  
$$v^-(0,x, Y_0)= V_0^P(Y_0)= v^+(0,x, Y_0).$$
\end{Theorem}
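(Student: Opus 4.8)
The proof follows the stochastic Perron's method of Bayraktar and S\^irbu adapted to the (non-standard) zero-sum game \eqref{pb:Principal:sansY0:piecewise} played between the Principal and the Nature. First I would record the structural facts that make $v^-$ and $v^+$ well behaved. The family $\mathbb{V}^-$ is stable under pairwise maxima and $\mathbb{V}^+$ under pairwise minima, so that $v^-=\sup_{v\in\mathbb{V}^-}v$ is lower semi-continuous and $v^+=\inf_{v\in\mathbb{V}^+}v$ upper semi-continuous (as monotone envelopes of continuous functions), and both sets are non-empty by Assumption \ref{ass:non-empty}. Taking the stopping rule $\tau=0$, $\rho=T$ in Definition \ref{def:sursous} and using the sub-/super-martingale inequalities \eqref{submartingale}--\eqref{supermartingale} together with the terminal bounds $(i-)/(i+)$ already gives the sandwich \eqref{ineg:v+-V}. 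It then remains to establish the two viscosity properties and to invoke the comparison hypothesis.

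The core is the bump argument showing that $v^-$ is a viscosity super-solution of \eqref{HJBI} on $[0,T)\times\mathbb R^d\times\mathbb R$. Arguing by contradiction, suppose there exist $(t_0,x_0,y_0)$ and $\varphi\in C^{1,2}$ touching $v^-$ from below at $(t_0,x_0,y_0)$ with $\partial_t\varphi+G(t_0,x_0,y_0,\nabla_x\varphi,\partial_y\varphi,\Delta_{xx}\varphi,\partial_{yy}\varphi,\nabla_{xy}\varphi)>0$. Since $b,\sigma$ are continuous and, by Assumption \ref{continuityR}, the outer supremum defining $G$ is over a compact set with continuous radius, $G$ is continuous, so this strict inequality persists on a small parabolic neighbourhood $\mathcal O$ of $(t_0,x_0,y_0)$; fixing a measurably selected near-optimal pair $(z^\star,\gamma^\star)$ in that supremum on $\mathcal O$, the map $\mathfrak n\mapsto \partial_t\varphi+g(\cdot,z^\star,\gamma^\star,\mathfrak n)$ stays strictly positive, uniformly on $\mathcal O\times N$. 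I would then set $\varphi^\delta:=\varphi+\delta-\lambda\,(\text{squared distance to }(t_0,x_0,y_0))$ with $\delta,\lambda$ small so that $\varphi^\delta<v^-$ on the parabolic boundary of $\mathcal O$ while $\varphi^\delta(t_0,x_0,y_0)=v^-(t_0,x_0,y_0)+\delta$, define $w:=v^-\vee\varphi^\delta$ on $\mathcal O$ and $w:=v^-$ outside (continuous, dominating $v^-$, strictly at $(t_0,x_0,y_0)$), and check that $w\in\mathbb{V}^-$. The verification of \eqref{submartingale} for $w$ proceeds by concatenating, at the first exit time from $\{w=\varphi^\delta\}$, the constant elementary control associated with $(z^\star,\gamma^\star)$ — with its $K$-component read off from the candidate decomposition \eqref{decomp:K}, so that $(\tilde Z,\tilde K)\in\mathcal U_{Y_0}$ — with the elementary control provided by the stochastic sub-solution property of $v^-$; Itô's formula applied to $\varphi^\delta(\cdot,X,Y)$ along the coupled dynamics \eqref{system:sde} turns the pointwise drift sign into the required conditional inequality, and the ``for every $(Z,K)$, every $(\mathbb P,\nu)$, every $\rho$'' quantifiers of Definition \ref{def:sursous} are handled by the stopping-rule/concatenation bookkeeping. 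This contradicts the maximality $v^-=\sup_{v\in\mathbb{V}^-}v$. The symmetric argument — exchanging the roles of the Principal and the Nature, i.e. the inner infimum of $G$ and the elementary controls $(\hat{\mathbb P},\tilde\nu)$ of Definition \ref{def:sursous}$(ii+)$ — shows that $v^+$ is a viscosity sub-solution.

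Next I would pin down the terminal traces: by construction $v^-(T,\cdot)\le U_P(L(\cdot)-U_A^{-1}(\cdot))\le v^+(T,\cdot)$, and the viscosity super-/sub-solution properties near $t=T$ (via a localized test-function argument, or by exhibiting explicit barriers) yield the reverse inequalities, so $v^-(T,\cdot)=v^+(T,\cdot)=U_P(L(\cdot)-U_A^{-1}(\cdot))$. Finally, invoking the assumed comparison principle for \eqref{HJBI} for the super-solution $v^-$ and the sub-solution $v^+$, which now carry identical terminal data, forces $v^-\equiv v^+$ on $[0,T]\times\mathbb R^d\times\mathbb R$; combined with \eqref{ineg:v+-V} this gives $v^-(0,x,Y_0)=V_0^P(Y_0)=v^+(0,x,Y_0)$, and the dynamic programming principle for $V_0^P$ is then read off directly from the defining inequalities of $\mathbb{V}^\pm$.

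The main obstacle is the verification that the perturbed function $w$ is genuinely a stochastic sub-solution (and symmetrically on the super-solution side): one must produce a piecewise-constant control that, on the one hand, lies in $\mathcal U_{Y_0}$ — hence respects the integrability requirements and, through \eqref{decomp:K}, the minimality condition \eqref{eq:minimality condition} built into $\mathcal K_{Y_0}$ — and, on the other hand, makes $w(\cdot,X,Y)$ a true submartingale against every response of the Nature along the coupled system \eqref{system:sde}, whose coefficients involve the Agent's best reaction $\alpha^\star(\mathcal X)$ and the nonlinear term $H$. Controlling the measurable selection of $(z^\star,\gamma^\star)$ and the concatenations at stopping rules, all within the weak formulation, is the delicate point; the boundedness reduction of Assumption \ref{continuityR} is precisely what makes these selections possible and provides the continuity of $G$ on which the viscosity arguments rely.
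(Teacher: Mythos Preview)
Your overall architecture is correct and matches the paper: stochastic Perron's method, contradiction via a local bump, It\^o's formula on the coupled system \eqref{system:sde}, then comparison plus \eqref{ineg:v+-V}. However, your bump construction has a genuine gap. You set $w:=v^-\vee\varphi^\delta$ and then speak of ``the elementary control provided by the stochastic sub-solution property of $v^-$''. But $v^-$ is merely the supremum over $\mathbb V^-$; it is only lower semi-continuous and, more importantly, is not itself an element of $\mathbb V^-$, so there is no elementary control attached to it that you could concatenate with at the exit time from $\{w=\varphi^\delta\}$. The paper (following \cite[Lemma 3.8]{sirbu2014stochastic}) handles this by a Dini-type step: pick a non-decreasing sequence $w_n\in\mathbb V^-$ with $w_n\uparrow v^-$, use the strict local minimum of $v^--\varphi$ to obtain $\varphi+\eta<v^-$ on an annulus, then find $n_0$ with $\varphi+\eta/2<w_{n_0}$ there, and define $w^\delta:=(\varphi+\delta)\vee w_{n_0}$ on the ball and $w_{n_0}$ outside. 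This $w^\delta$ is continuous and, outside the bump, you now have an actual element of $\mathbb V^-$ whose elementary control $(\tilde Z^1,\tilde K^1)$ you can switch to.

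A second point you gloss over is the $K$-component. The expression \eqref{decomp:K} evaluated at a fixed $(\hat z,\hat\gamma)$ is an absolutely continuous process, not a piecewise-constant one, so it is not an elementary control in $\mathcal U_{Y_0}(t,\tau)$. The paper inserts an approximation lemma (their Lemma \ref{key:lemma}) producing elementary $\hat k^p$ with $\big|\int\psi\,dK(\hat z,\hat\gamma)-\int\psi\,d\hat k^p\big|\le\varepsilon$, which is what allows the It\^o computation to still give a strict drift sign with margin $\varepsilon'/2$; they then check the minimality condition \eqref{eq:minimality condition} for $\hat k^p$ via a measurable selection argument from \cite{soner2012wellposedness}. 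Your final paragraph flags this as a difficulty, but your proposed construction does not address it.
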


\subsubsection{On comparison results in the one-dimensional case for bounded diffusions with quadratic cost.}

In general, it seems to be hard to get a comparison result for HJBI equation \eqref{HJBI} in a very general model and we are convinced that only a case-by-case approach has to be considered. In this section we focus on the one--dimensional case, and we illustrate why we can expect a comparison result for HJBI equation \eqref{HJBI}, when the domain of the equation and the space of controls are bounded. 

\vspace{0.3em} \noindent
For a positive real $\overline a$, let $A:=[0,\overline a]$ be the set of values of control $\alpha$. We assume for technical reasons detailed below (see the footnote) that $U_A$ is a bounded and increasing continuous map. For $M>0$, let $\varphi^M$ be a smooth function taking values in $[0,1]$, such that $\varphi^M(x)=0$ for $|x|\geq M+1$ and $\varphi^M(x)=1$ for $|x|\leq M$. Consider $b(t,x,a,\mathfrak n):=a \varphi^M(x)$ and $\sigma(t,x,\mathfrak n):=\mathfrak n\varphi^M(x)$. In this case, recalling \cite[Remark 3.1]{gong2009viscosity}, the output process $X$ takes values in a bounded set $\mathcal O^X$. We now turn to the process $Y$. Assume that $c(t,x,a)=\frac{a^2}2$. Then, $\alpha^\star(t,x,z)= \pi_A(z\varphi^M(x))$ where $\pi_A$ denotes the projective map from $\mathbb R$ into $A$. Thus 2BSDE \eqref{eq:2bsde} becomes 
$$
Y_t=U_A(\xi)+\int_t^T f(X_s,Z_s)ds-\int_t^TZ_s\cdot dX_s-\int_t^TdK_s,~ \mathcal P_A-q.s,
$$
with $f(x,z):=\left( \pi_A(z \varphi^M(x))\varphi^M(x)z-\frac{ |\pi_A(z \varphi^M(x))|^2}{2}\right)$. Notice that, due to the vertex property together with the definition of $A$, we have
$$
-\frac{|z|^2}{2}\leq f(x,z)\leq \frac{|z|^2}2.
$$
Inspired by \cite[Section 4]{possamai2013second}, we introduce the following purely quadratic 2BSDEs, for $\varsigma\in \{-1,1\}$
\begin{equation}\label{2BSDEentropic}
Y_t^\varsigma=U_A(\xi)+\int_t^T \varsigma\frac{| Z^\varsigma_s|^2}2ds-\int_t^TZ^\varsigma_s\cdot dX_s-\int_t^TdK^\varsigma_s,~ \mathcal P_A-q.s.
\end{equation}
Since $U_A$ is bounded\footnote{The assumption on $U_A$ bounded, instead of $U_A$ concave, is fundamental here to get the comparison theorem. In fact, for quadratic growth BSDE we have to assume the the generator is convex together with exponential moments for the terminal conditions (see \cite{briand2008quadratic}). As far as we now this kind of result does not exist for quadratic 2BSDE, we however think that $U_A$ bounded could be removed by adding finite exponential moments for $U_A(\xi)$ in the definition of $\mathcal C$.}, it follows from \cite[Proposition 4.1]{possamai2013second} that the 2BSDEs admit a unique solution and that there exists a positive constant $\kappa^Y>0$ such that  for all $t\in [0,T]$,  $|Y^\varsigma_t|$ is uniformly bounded by $\kappa^Y$. Hence, we deduce from a comparison principle for 2BSDEs with quadratic growth (see for instance \cite[Proposition 3.1]{possamai2013second}) that $|Y_t|\leq \max_{\varsigma\in \{-1,1\}}(|Y_t^\varsigma|)\leq |\kappa^Y|, \; \forall t\in[0,T], ~ \mathbb P-a.s. $ Thus, the continuation utility of the Agent, being a state variable of the problem of the Principal, is bounded so we can restrict the domain of $y$ in \eqref{HJBI} to $\mathcal O^Y:=[-\kappa^Y,\kappa^Y]$.\vspace{0.5em}

We have shown that we can restrict the domain of the HJBI equation \eqref{HJBI} to a bounded domain $\mathcal O^X\times \mathcal O^Y$. To get now a comparison principle in the sense of Theorem \ref{main:Principal}, we refer to the proof of Lemma 4.3 in S\^irbu \cite{sirbu2014stochastic}. Indeed, by noticing that the required conditions on the parameters are evaluated at the test functions (see Step 3 of the proof of Lemma 4.3 in \cite{sirbu2014stochastic}), the continuity of the radius $R$ in Lemma \ref{lemma:coercivity} ensures that we can reproduce the proof by choosing a penalisation function of the form  $\phi(t,x,y)=e^{-\lambda t}(1+|x|+\psi(|y|))$ where $\psi$ is concave continuously differentiable twice and positive on $\mathcal O^Y$. Therefore, a comparison theorem for HJBI equation \eqref{HJBI} can be obtained and the last part of Theorem \ref{main:Principal} holds.

 \section{Conclusion}
In this work we provide a general comprehensive methodology for Principal-Agent problems with volatility uncertainty and worst-case approach from both sides in a general framework. We characterize the value function of the Agent as the solution to a second--order BSDE. Concerning the problem of the Principal, we rewrite it as a non--standard stochastic differential game that we solve by using Perron's method inspiring by the work of S\^irbu \cite{sirbu2014stochastic}. In this work we extend
\begin{itemize}
\item \cite{mastrolia2016moral} by dropping all the technical assumptions needed and with more general models,
\item \cite{sung2017optimal} by considering general utilities for both the Agent and the Principal in a more general model without any restrictions on the form of the contracts.
\end{itemize}
This work is also a complement of
\begin{itemize}
\item \cite{cvitanic2017dynamic} by considering an uncertainty on the set of probabilities so that the problem of the Principal is more difficult to solve,
\item \cite{sirbu2014stochastic} by adding a Stackelberg equilibrium in the stochastic game.
\end{itemize}

To provide a path for future research, we would like to point that \cite{mastrolia2016moral} has solved a particular \textit{non-learning} model explicitly. Although this assumption allows to get nice closed formula for optimal contracts, it is clearly not realistic at all that the ambiguity set is fixed. In the present paper, we assume that the ambiguity set can evolve along the time but we do not specify how it evolves. An interesting extension to this work might be to add in the problem an adaptive method, inspired by the recent paper \cite{bielecki2017adaptive}, to update the ambiguity set. Indeed, we are convinced that a learning procedure will lead to sharper estimates of the unknown volatility process and that the ambiguity may become negligible after some time.   
 
\section*{Acknowledgments}

The authors thanks a lot Erhan Bayraktar, Dylan Possama\"i and Orlando Rivera for discussions, advices and suggestions concerning this investigation. A part of this work was made when Nicol\'as Hern\'andez-Santib\'a\~nez was a PhD student at Universidad de Chile and Universit\'e Paris-Dauphine. This work benefits from the support of CONICYT, the Chair Financial Risk and the ANR PACMAN.



\appendix

\section{Functional spaces} \label{appendix:spaces}

We finally introduce the spaces used in this paper, by following \cite{possamai2015stochastic}. Let $t\in [0,T]$ and $x\in \Omega$ and a family $(\mathcal P(t,x))_{t\in [0,T]\times x\in \Omega}$ of sets of probability measures on $(\Omega,\mathcal F_T)$. In this section, we denote by $\mathbb X:= (\mathcal X_s)_{s\in [0,T]}$ a general filtration on $(\Omega,\mathcal F_T)$. For any $\mathcal X_T-$measurable real valued random variable $\xi$ such that $\sup_{\mathbb P\in \mathcal P(t,x)} \mathbb E^\mathbb P[|\xi|]<+\infty$, we set for any $s\in [t,T]$
$$\mathbb E_s^{\mathbb P,t,x,\mathbb X^+}[\xi]:= \underset{\mathbb P'\in \mathcal P (t,x)[\mathbb P,\mathbb X^+,t]}{\esup^\mathbb P} \mathbb E^{\mathbb P'}[\xi | \mathcal X_s].$$

Let $p\geq 1$ and $\mathbb P\in \mathcal P(t,x)$ and $\mathbb X_\mathbb P$ the usual $\mathbb P$-augmented filtration associated with $\mathbb X$.
\begin{itemize}
\item Let $\kappa\in [1,p]$, $\mathbb L^{p,\kappa}_{t,x}(\mathbb X,\mathcal P)$ denotes the space of $\mathcal X_T-$measurable $\mathbb R-$valued random variables $\xi$ such that
$$\| \xi\|^p_{\mathbb L^{p,\kappa}_{t,x}(\mathbb X,\mathcal P)}:= \sup_{\mathbb P\in \mathcal P(t,x)} \mathbb E^\mathbb P\left[ \underset{t\leq s\leq T}{\esup^\mathbb P} \left( \mathbb E_s^{\mathbb P,t,x,\mathbb F^+}[|\xi|^\kappa]\right)^{\frac p\kappa}\right]<+\infty.$$
\item $\mathbb H^p_{t,x}(\mathbb X,\mathbb P)$ denotes the spaces of $\mathbb X$-predictable $\mathbb R^d$-valued processes $Z$ such that
$$\| Z\|_{\mathbb H^p_{t,x}(\mathbb X, \mathbb P)}^p:=\mathbb E^\mathbb P\left[\left( \int_t^T \| \widehat{\sigma}_s^\frac12Z_s\|^2 ds\right)^\frac p2 \right] <+\infty. $$
We denote by $\mathbb H^p_{t,x}(\mathbb X,\mathcal P)$ the spaces of $\mathbb X$-predictable $\mathbb R^d$-valued processes $Z$ such that
$$\| Z\|_{\mathbb H^p_{t,x} (\mathbb X,\mathcal P)   }^p:=\sup_{\mathbb P\in \mathcal P(t,x)} \| Z\|_{\mathbb H^p_{t,x}(\mathbb P)}^p <+\infty. $$
\item $\mathbb S^p_{t,x}(\mathbb X,\mathbb P)$ denotes the spaces of $\mathbb X$-progressively measurable $\mathbb R$-valued processes $Y$ such that
$$\| Y\|_{\mathbb S^p_{t,x}(\mathbb X, \mathbb P)}^p:=\mathbb E^\mathbb P\left[\sup_{s\in [t,T]} |Y_s|^p \right] <+\infty. $$
We denote by $\mathbb S^p_{t,x}(\mathbb X,\mathcal P)$ of $\mathbb X$-progressively measurable $\mathbb R$-valued processes  $Y$ such that
$$\| Y\|_{\mathbb S^p_{t,x} (\mathbb X,\mathcal P)}^p:=\sup_{\mathbb P\in \mathcal P(t,x)} \| Y\|_{\mathbb S^p_{t,x}(\mathbb P)}^p <+\infty. $$
\item $\mathbb K^p_{t,x}(\mathbb X,\mathbb P)$ denotes the spaces of $\mathbb X$-optional $\mathbb R$-valued processes $K$ with $\mathbb P-$a.s. c\`adl\`ag and non-decreasing paths on $[t,T]$ with $K_t=0,\, \mathbb P-a.s.$ and
$$\| K\|_{\mathbb K^p_{t,x}(\mathbb X, \mathbb P)}^p:=\mathbb E^\mathbb P\left[K_T^p \right] <+\infty. $$
We denote by $\mathbb K^p_{t,x}((\mathbb X_\mathbb P)_{\mathbb P \in \mathcal{P}(t,x)})$ the set of families of processes $(K^\mathbb P)_{\mathbb P\in \mathcal P(t,x)}$ such that for any $\mathbb P\in \mathcal P(t,x)$, $K^\mathbb P\in \mathbb K^p_{t,x}(\mathbb X_\mathbb P,\mathbb P)$ and
$$\sup_{\mathbb P\in \mathcal P(t,x)} \|K^\mathbb P\|_{\mathbb K^p_{t,x}(\mathbb P)}<+\infty. $$

\item $\mathbb M^p_{t,x}(\mathbb X,\mathbb P)$ denotes the spaces of $\mathbb X$-optional $\mathbb R$-valued martingales $M$ with $\mathbb P-$a.s. c\`adl\`ag paths on $[t,T]$ with $M_t=0,\, \mathbb P-a.s.$ and
$$\| M\|_{\mathbb M^p_{t,x}(\mathbb X, \mathbb P)}^p:=\mathbb E^\mathbb P\left[[M]_T^{\frac{p}2} \right] <+\infty. $$
We denote by $\mathbb M^p_{t,x}((\mathbb X_\mathbb P)_{\mathbb P \in \mathcal{P}(t,x)})$ the set of families of processes $(M^\mathbb P)_{\mathbb P\in \mathcal P(t,x)}$ such that for any $\mathbb P\in \mathcal P(t,x)$, $M^\mathbb P\in \mathbb M^p_{t,x}(\mathbb X_\mathbb P,\mathbb P)$ and
$$\sup_{\mathbb P\in \mathcal P(t,x)} \|M^\mathbb P\|_{\mathbb M^p_{t,x}(\mathbb P)}<+\infty. $$
\end{itemize}

When $t=0$ we simplify the previous notations by omitting the dependence on $x$.

\section{Proofs for the Agent's problem}

\begin{proof}[Proof of Lemma \ref{lemma:wellposedness 2bsde}.] Since $\frac{\ell+m}{\underline m+1-\ell}\leq 2$, we have from Lemma \ref{lemmagrowth} that the 2BSDE \eqref{eq:2bsde} has quadratic growth with respect to $z$ and coincide with the framework of \cite{possamai2013second}. In view of Remark 4.2 in \cite{possamai2015stochastic}, we aim at applying Theorem 4.1 in \cite{possamai2015stochastic} by slightly changing its assumptions. More precisely, we replace (i) of Assumption 2.1 in \cite{possamai2015stochastic} by Assumption 2.1 in \cite{possamai2013second}, excepting part (iii). Condition (iv) in \cite{possamai2013second} is a consequence of Lemma \ref{lemmagrowth}. Conditions (v)-(vi) in \cite{possamai2013second} holds in our setting because $k$ is bounded. Therefore, Assumption 2.1 in \cite{possamai2013second} is satisfied.

\vspace{0.3em} \noindent 
Finally, we turn to parts (ii)--(v) of Assumption 2.1 in \cite{possamai2015stochastic}. The terminal condition $U_A(\xi)$ belongs to $\mathbb L^{p,\kappa}_{0,x}(\mathcal{P}_A)$ by definition of the admissible contracts and the conditions imposed on $c$ in $(\mathbf H^{\ell,m,\underline m})$ ensure that (ii) holds. The parts (iii), (iv) and (v) correspond exactly to our Assumption \ref{assumption:proba}. \end{proof}


\vspace{0.5em}
\begin{proof}[Proof of Theorem \ref{thm:optimaleffort}.]  We first prove that \eqref{resolution:pgAgent} holds with a characterization of the optimal effort of the Agent as a maximizer of the 2BSDE \eqref{eq:2bsde}. The proof is divided in 4 steps. 

\vspace{0.5em}
\hspace{2em}{$\bullet$ \bf Step 1:} For every $(\alpha,\nu)\in\mathcal{A} \times\mathcal{V}(\hat{\sigma}^2)$ denote by $(Y^{\alpha,\nu},Z^{\alpha,\nu},K^{\alpha,\nu})$ the solution of the following controlled 2BSDE, defined $\mathcal P_A-q.s.$ (well-posedness holds by the same arguments employed in the proof of Lemma \ref{lemma:wellposedness 2bsde}) 
\begin{equation}\label{eq:2bsde controlled} Y_t^{\alpha,\nu}=U_A(\xi)+\int_t^TF(s,X,Y^{\alpha,\nu}_s,Z^{\alpha,\nu}_s,\alpha_s,\nu_s)ds-\int_t^TZ_s^{\alpha,\nu}\cdot dX_s-\int_t^TdK^{\alpha,\nu}_s- \int_0^TdM_s^{\alpha,\nu}.
\end{equation}
Consider also, for every $\alpha\in\mathcal{A}$, the solution $(Y^{\alpha},Z^{\alpha},K^{\alpha})$ of the following 2BSDE, defined $\mathcal P_A-q.s.$
\begin{equation}\label{eq:2bsde effort} Y_t^{\alpha}=U_A(\xi)+\int_t^T \underset{\nu \in V_s(x,\hat{\sigma}_s^2) }{ {\rm inf} } F(s,X,Y^{\alpha}_s,Z^{\alpha}_s,\alpha_s,\nu)ds-\int_t^TZ_s^{\alpha}\cdot dX_s-\int_t^TdK^{\alpha}_s- \int_0^TdM_s^{\alpha}.
\end{equation}
We have from comparison theorems for 2BSDEs (which are inherited by the classical comparison results for BSDEs) 
\begin{align} \nonumber
Y_0 & = \underset{\alpha \in\mathcal{A}}{{\rm ess~sup}^\mathbb{P}} ~ Y_0^{\alpha},~\mathbb{P}-\textrm{a.s. for every } \mathbb{P}\in \mathcal P_A \\
& = \underset{\alpha \in\mathcal{A}}{{\rm ess~sup}^\mathbb{P}} ~ \underset{\nu\in\mathcal{V}(\hat{\sigma}^2)}{{\rm ess~inf}^\mathbb{P}} ~ Y_0^{\alpha,\nu},~\mathbb{P}-\textrm{a.s. for every } \mathbb{P}\in \mathcal P_A. \label{eq:2BSDE-maximizers-Fstar}
\end{align}

\vspace{0.5em}
\hspace{2em}{$\bullet$ \bf Step 2:} Next, consider for any $\mathbb{P}\in \mathcal P_A$ the triple $(\mathcal{Y}_t^{\mathbb{P},\alpha,\nu},\mathcal{Z}_t^{\mathbb{P},\alpha,\nu},\mathcal{M}_t^{\mathbb{P},\alpha,\nu})_{t\in[0,T]}$ which is the solution of the (well-posed) linear BSDE
\begin{equation}\label{eq:bsde controlled}
\mathcal{Y}_0^{\mathbb{P},\alpha,\nu}=U_A(\xi)+\int_0^TF(s,X,\mathcal{Y}_s^{\mathbb{P},\alpha,\nu},\mathcal{Z}_s^{\mathbb{P},\alpha,\nu},\alpha_s,\nu_s)ds-\int_0^T\mathcal{Z}_s^{\mathbb{P},\alpha,\nu}\cdot dX_s-\int_0^Td\mathcal{M}_s^{\mathbb{P},\alpha,\nu},~ \mathbb{P}-a.s.
\end{equation}

\vspace{0.5em} \noindent
We will follow the idea of Theorem 4.2 in \cite{possamai2015stochastic}, to prove that for every $(\alpha,\nu)\in \mathcal{A}\times\mathcal{V}(\hat{\sigma}^2)$, the solution of the 2BSDE \eqref{eq:2bsde controlled} satisfies the following representation
\begin{equation}\label{eq:2BSDE representation}
Y_0^{\alpha,\nu} = \underset{\mathbb{P'}\in \mathcal P_A[\mathbb P,\mathbb F^+,0]}{{\rm ess~inf}^\mathbb{P}} ~\mathcal Y_0^{\mathbb{P'},\alpha,\nu},~\mathbb{P}-a.s. \textrm{ for every } \mathbb{P}\in \mathcal P_A.
\end{equation}

\vspace{0.5em} \noindent
First, notice that since $K^{\alpha,\nu}$ is non-decreasing, we have for every $ \mathbb{P}\in \mathcal P_A$ and $\mathbb{P'}\in \mathcal P_A[\mathbb P,\mathbb F^+,0]$
$$
Y_0^{\alpha,\nu}\leq ~\mathcal Y_0^{\mathbb{P'},\alpha,\nu},\; \mathbb P-a.s.
$$
thus 
$$
Y_0^{\alpha,\nu} \leq \underset{\mathbb{P'}\in \mathcal P_A[\mathbb P,\mathbb F^+,0]}{{\rm ess~inf}^\mathbb{P}} ~\mathcal Y_0^{\mathbb{P'},\alpha,\nu},~\mathbb{P}-a.s.\textrm{ for every } \mathbb{P}\in \mathcal P_A.
$$
To the reverse inequality, compute for every $\mathbb{P}\in \mathcal P_A$
\begin{align*}
\mathcal{Y}_t^{\mathbb{P},\alpha,\nu}-Y_t^{\alpha,\nu} &= \int_t^T\left( F(s,X,\mathcal{Y}_s^{\mathbb{P},\alpha,\nu},\mathcal{Z}_s^{\mathbb{P},\alpha,\nu},\alpha_s,\nu_s)  -  F(s,X,Y^{\alpha,\nu}_s,Z^{\alpha,\nu}_s,\alpha_s,\nu_s) \right)ds\\
&-\int_t^T\left( \mathcal{Z}_s^{\mathbb{P},\alpha,\nu} - Z_s^{\alpha,\nu} \right)\cdot dX_s+\int_t^TdK^{\alpha,\nu}_s - \int_t^T \left( d\mathcal M^{\mathbb P,\alpha,\nu}_s - dM^{\alpha,\nu}_s \right), \mathbb{P}-a.s.
\end{align*}
Which is equivalent to
\begin{align*}
\mathcal{Y}_t^{\mathbb{P},\alpha,\nu}-Y_t^{\alpha,\nu} &= \int_t^T\left(-k(s,X,\alpha_s,\nu_s)(\mathcal{Y}_s^{\mathbb{P},\alpha,\nu}-Y^{\alpha,\nu}_s) + b(s,X,\alpha_s) \left( \mathcal{Z}_s^{\mathbb{P},\alpha,\nu} - Z_s^{\alpha,\nu} \right) \right)ds\\
&-\int_t^T\left( \mathcal{Z}_s^{\mathbb{P},\alpha,\nu} - Z_s^{\alpha,\nu} \right)\cdot dX_s+\int_t^TdK^{\alpha,\nu}_s - \int_t^T \left( d\mathcal M^{\mathbb P,\alpha,\nu}_s - dM^{\alpha,\nu}_s \right), \mathbb{P}-a.s.
\end{align*}
Using a linearization (see for instance \cite{el1997backward}), we get
$$
\mathcal{Y}_0^{\mathbb{P},\alpha,\nu}-Y_0^{\alpha,\nu}=\E^{\P^{\alpha}}\left[ \int_0^T \mathcal K_{0,s}^{\alpha,\nu}  dK_s^{\alpha,\nu}~\Bigg|~ \mathcal{F}_0  \right],~ \mathbb{P}-a.s.
$$
Then, from Assumption $(\mathbf H^{\ell,m,\underline m})$ $(iii)$ we deduce that 
$$
\mathcal{Y}_0^{\mathbb{P},\alpha,\nu}-Y_0^{\alpha,\nu}\geq e^{-\kappa T}\E^{\P^{\alpha}}\left[ \int_0^T dK^{\alpha,\nu}_s~\Bigg|~ \mathcal{F}_0  \right],~ \mathbb{P}-a.s.
$$
Since $K^{\alpha,\nu}$ satisfies the minimality condition \eqref{eq:minimality condition}, we deduce that $\mathcal{Y}_0^{\mathbb{P},\alpha,\nu}-Y_0^{\alpha,\nu}\geq0,\; \mathbb{P}-a.s.$ for every $\mathbb{P}\in \mathcal P_A$ and \eqref{eq:2BSDE representation} holds.

\vspace{0.5em}
\hspace{2em}{$\bullet$ \bf Step 3:}
Finally, by denoting $c_s^\alpha:= c(s,X,\alpha_s)$, $k_s^{\alpha,\nu}=k(s,X,\alpha_s,\nu_s)$, $b_s^{\alpha,\nu}=b(s,X,\alpha_s,\nu_s)$, we can rewrite the BSDE \eqref{eq:bsde controlled} $\mathbb{P}-a.s.$ as
\begin{equation*}
\mathcal{Y}_0^{\mathbb{P},\alpha,\nu} = U_A(\xi)+\int_0^T \left( - k_s^{\alpha,\nu} \mathcal{Y}_s^{\mathbb{P},\alpha,\nu} - c_s^\alpha + \mathcal{Z}_s^{\mathbb{P},\alpha,\nu} \cdot b_s^{\alpha,\nu}  \right) ds-\int_0^T\mathcal{Z}_s^{\mathbb{P},\alpha,\nu}\cdot dX_s-\int_0^Td\mathcal{M}_s^{\mathbb{P},\alpha,\nu}.
\end{equation*}
Which is equivalent to
\begin{align*}
\mathcal{Y}_0^{\mathbb{P},\alpha,\nu}  = \ & U_A(\xi)+ \int_0^T \left( -k_s^{\alpha,\nu} \mathcal{Y}_s^{\mathbb{P},\alpha,\nu} - c_s^\alpha  + (\sigma_s^\nu)^\top \mathcal{Z}_s^{\mathbb{P},\alpha,\nu} \cdot (\sigma_s^\nu)^\top \left( \sigma_s^\nu \sigma_s^{\nu^\top} \right)^{-1} b_s^{\alpha,\nu}  \right) ds \\
 & -  \int_0^T (\sigma_s^\nu)^\top \mathcal{Z}_s^{\mathbb{P},\alpha,\nu}\cdot  dW_s^{\mathbb P}-\int_0^Td\mathcal{M}_s^{\mathbb{P},\alpha,\nu}.
\end{align*}
Defining $\mathcal{\hat{Z}}_s^{\mathbb{P},\alpha,\nu} = (\sigma_s^\nu)^\top \mathcal{Z}_s^{\mathbb{P},\alpha,\nu}$, we obtain
\begin{align*}
\mathcal{Y}_0^{\mathbb{P},\alpha,\nu}  = \ & U_A(\xi) + \int_0^T \left( - k_s^{\alpha,\nu} \mathcal{Y}_s^{\mathbb{P},\alpha,\nu} - c_s^\alpha + \mathcal{\hat{Z}}_s^{\mathbb{P},\alpha,\nu} \cdot (\sigma_s^\nu)^\top \left( \sigma_s^\nu \sigma_s^{\nu^\top} \right)^{-1} b_s^{\alpha,\nu}  \right) ds \\
 & -  \int_0^T \mathcal{\hat{Z}}_s^{\mathbb{P},\alpha,\nu} \cdot  dW_s^{\mathbb P}-\int_0^Td\mathcal{M}_s^{\mathbb{P},\alpha,\nu},
\end{align*}
whose solution is 
$$
\mathcal Y_0^{\mathbb P,\alpha,\nu}=\E^{\mathbb{P}^{\alpha,\nu}}\left[ \mathcal K_{0,T}^{\alpha,\nu} U_A(\xi)-\int_0^T \mathcal K_{0,s}^{\alpha,\nu}  c_s^\alpha ds~\Bigg|~ \mathcal{F}_0  \right],~ \mathbb{P}-a.s.,
$$
where the measure $\mathbb{P}^{\alpha,\nu}$ is equivalent to $\mathbb{P}$ and is defined by
$$
\frac{d\mathbb{P}^{\alpha,\nu}}{d\mathbb P}:= \mathcal E\left(\int_0^T   \sigma^\top(\sigma\sigma^\top)^{-1}(s,X,\nu_s) b(s,X,\alpha_s,\nu_s) \cdot dW_s^{\mathbb P}\right).
$$

\vspace{0.5em}
\hspace{2em}{$\bullet$ \bf Step 4:} We have from the previous steps that the measure $\mathbb{P}^{\alpha,\nu}\in \mathcal{P}_A^\alpha$ and for every measure $\mathbb{P}\in \mathcal{P}_A$ we have $\mathbb{P}$-a.s. 
\begin{align*}
Y_0 & = \underset{\alpha \in\mathcal{A}}{{\rm ess~sup}^\mathbb{P}} ~ \underset{\nu\in\mathcal{V}(\hat{\sigma}^2)}{{\rm ess~inf}^\mathbb{P}} ~ \underset{\mathbb{P'}\in \mathcal P_A[\mathbb P,\mathbb F^+,0]}{{\rm ess~inf}^\mathbb{P}} \E^{\mathbb{P'^{\alpha,\nu}}}\left[ \mathcal K_{0,T}^{\alpha,\nu} U_A(\xi) - \int_0^T \mathcal K_{0,s}^{\alpha,\nu}  c_s^\alpha ds~\Bigg|~ \mathcal{F}_0 \right] \\
& =  \underset{\alpha\in\mathcal{A}}{{\rm ess~sup}^\mathbb{P}}~ \underset{(\mathbb{P'},\nu)\in \mathcal N_A^\alpha[\mathbb P,\mathbb F^+,0]}{{\rm ess~inf}^\mathbb{P}} \E^{\mathbb{P'}}\left[ \mathcal K_{0,T}^{\alpha,\nu} U_A(\xi) - \int_0^T \mathcal K_{0,s}^{\alpha,\nu}  c_s^\alpha ds~\Bigg|~ \mathcal{F}_0 \right] .
\end{align*}
By similar arguments to the ones used in the proofs of Lemma 3.5 and Theorem 5.2 of \cite{possamai2015stochastic}, it follows that
\begin{align*}
\sup_{\alpha \in \mathcal{A}}~\underset{(\P,\nu)\in\mathcal N^\alpha_A }{\inf}   \mathbb{E}^\mathbb{P} \left[ Y_0 \right] &  = 	\sup_{\alpha \in \mathcal{A}}~\underset{(\mathbb \P,\nu)\in\mathcal N^\alpha_A }{\inf}\E^\P\left[ \mathcal K_{0,T} U_A(\xi) - \int_0^T \mathcal K_{0,s}  c_s^\alpha ds \right] \\
& = U_0^A(\xi).
\end{align*}

\vspace{0.5em} \noindent
We now turn to the second part of the Theorem with the characterization of an optimal triplet $(\alpha,\mathbb P,\nu)$ for the optimization problem \eqref{resolution:pgAgent}.
From the proof of the first part, it is clear that a control $(\alpha^\star,\mathbb{P}^\star,\nu^\star)$ is optimal if and only if it attains all the essential suprema and infima above. The infimum in \eqref{eq:2BSDE representation} is attained if $(ii)$ holds and equality \eqref{eq:2BSDE-maximizers-Fstar} holds if $\alpha^\star$ and $\nu^\star$ satisfy $(i)$.
\end{proof}


\section{Proof of Lemma \ref{lemma:coercivity}}\label{appendix:coercivity}

The proof of Lemma \ref{lemma:coercivity} is based on the following Lemma.
\begin{Lemma} \label{lemma:orly}
Let $\sigma:[c,d]\rightarrow \mathbb{R}$ be continuous, strictly positive and let $q:[c,d]\rightarrow \mathbb{R}$ be continuous. Define for every $\gamma\in\mathbb{R}$ the map $f_\gamma(x) := \gamma \sigma(x)^2 - q(x)$. 
\begin{enumerate}
\item Let $\underline{x}$ be a minimizer of $\sigma$, which maximizes $q$ between all the minimizers of $\sigma$. If the following limit is finite  
$$
\ell:= 	\displaystyle \lim_{x\to \underline{x}} ~ \frac{q(x)-q(\underline{x})}{\sigma(x)-\sigma(\underline{x})},
$$ 
then there exists $M>0$ such that $f_\gamma$ attains its minimum over $[c,d]$ at $\underline{x}$ for every $\gamma>M$.
\item Let $\overline{x}$ be a maximizer of $\sigma$, which minimizes $q$ between all the maximizers of $\sigma$. If the following limit is finite  
$$
L:= 	\displaystyle \lim_{x\to \overline{x}} ~ \frac{q(x)-q(\overline{x})}{\sigma(x)-\sigma(\overline{x})},
$$
then there exists $m<0$ such that $f_\gamma$ attains its minimum over $[c,d]$ at $\overline{x}$ for every $\gamma<m$.
\end{enumerate}
\end{Lemma}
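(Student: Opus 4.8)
The plan is to prove part 1; part 2 then follows by the symmetric argument, exchanging the minimizer and maximizer of $\sigma$, replacing $L$ by $\ell$, and letting $\gamma\to-\infty$ instead of $+\infty$ with all the relevant inequalities reversed. Write $\underline\sigma:=\sigma(\underline x)=\min_{[c,d]}\sigma>0$ and let $\mathcal M:=\{x\in[c,d]:\sigma(x)=\underline\sigma\}$ be the (closed) set of minimizers of $\sigma$, so that $q(\underline x)=\max_{x\in\mathcal M}q(x)$ by hypothesis. For $\gamma\ge 0$ one has
$$
f_\gamma(x)-f_\gamma(\underline x)=\gamma\big(\sigma(x)^2-\underline\sigma^2\big)-\big(q(x)-q(\underline x)\big),
$$
and the goal is to make this non-negative for every $x\in[c,d]$ by taking $\gamma$ large.

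First I would dispose of $x\in\mathcal M$: there the expression equals $q(\underline x)-q(x)\ge 0$ by maximality of $q$ over $\mathcal M$, with no condition on $\gamma$. So everything reduces to the relatively open set $D:=[c,d]\setminus\mathcal M=\{\sigma>\underline\sigma\}$, on which the difference quotient $r(x):=\big(q(x)-q(\underline x)\big)/\big(\sigma(x)-\underline\sigma\big)$ is continuous. Factoring $\sigma(x)^2-\underline\sigma^2=(\sigma(x)-\underline\sigma)(\sigma(x)+\underline\sigma)$ and bounding $\sigma(x)+\underline\sigma\ge 2\underline\sigma$, I obtain for $x\in D$ and $\gamma\ge0$
$$
f_\gamma(x)-f_\gamma(\underline x)\ge(\sigma(x)-\underline\sigma)\big(2\gamma\underline\sigma-r(x)\big),
$$
so once I show $S:=\sup_{D}r<+\infty$ the constant $M:=\max(S,0)/(2\underline\sigma)$ works: for $\gamma>M$ the right side is positive on $D$ (and the difference is $\ge0$ on $\mathcal M$), whence $\underline x$ minimizes $f_\gamma$ over $[c,d]$.

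The key step — and the main obstacle — is establishing $S<+\infty$, since $D$ is not compact and $r$ has the indeterminate form $0/0$ at the minimizers of $\sigma$; this is exactly what the finiteness of the one-sided limit in the hypothesis is designed to tame. I would argue by contradiction: a sequence $(x_n)\subset D$ with $r(x_n)\to+\infty$ admits a subsequence converging to some $x^\star\in[c,d]$; continuity of $r$ on $D$ forces $x^\star\in\mathcal M$, and then $\sigma(x_n)-\underline\sigma\to0^+$ while $q(x_n)-q(\underline x)\to q(x^\star)-q(\underline x)\le 0$. If $q(x^\star)<q(\underline x)$ then $r(x_n)\to-\infty$, a contradiction; hence $q(x^\star)=q(\underline x)$. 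If $x^\star=\underline x$ this contradicts the assumed finiteness of $\ell=\lim_{x\to\underline x}r(x)$. The only surviving case is a second minimizer $x^\star\neq\underline x$ of $\sigma$ at which $q$ again attains its maximum over $\mathcal M$; this is ruled out by the uniqueness of the minimizer of $\sigma$ in force in Lemma \ref{lemma:coercivity} (and, in the general setting of Assumption \ref{ass:orly}, by the finiteness of the analogous one-sided limit at every global minimizer of $\sigma$). In all cases no such sequence exists, so $S<+\infty$, concluding the proof; part 2 is handled identically, with $\overline\sigma=\max\sigma$, the set of maximizers, the limit $L$ in place of $\ell$, and $\gamma$ negative of large modulus.
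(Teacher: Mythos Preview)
Your proof is correct and follows essentially the same route as the paper: define the difference quotient $r(x)=\dfrac{q(x)-q(\underline x)}{\sigma(x)-\sigma(\underline x)}$, bound it on $[c,d]$ using the finiteness of $\ell$, factor $\sigma(x)^2-\underline\sigma^2=(\sigma(x)-\underline\sigma)(\sigma(x)+\underline\sigma)\ge 2\underline\sigma(\sigma(x)-\underline\sigma)$, and take $M$ proportional to the bound on $r$ divided by $2\underline\sigma$. The paper simply declares ``without loss of generality $\sigma$ has a unique minimizer'' and then extends $r$ continuously to all of $[c,d]$ by setting $r(\underline x)=\ell$, whereas you keep the set $\mathcal M$ of minimizers explicit, dispose of it separately via the maximality of $q(\underline x)$ on $\mathcal M$, and run a compactness/contradiction argument on $D=[c,d]\setminus\mathcal M$; you are also more candid that the residual case of a second minimizer $x^\star\neq\underline x$ with $q(x^\star)=q(\underline x)$ genuinely needs the extra uniqueness hypothesis (or the finiteness of the limit at every minimizer, as in Assumption~\ref{ass:orly}), a point the paper's ``WLOG'' glosses over.
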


\begin{proof}[\bf Proof]
\begin{enumerate}
\item We suppose without loss of generality that $\sigma$ attains its minimum over $[c,d]$ at a unique point $\underline{x}$. Define $g:[c,d]\rightarrow\mathbb{R}$ by
$$
g(x) = \left\{ \begin{array}{cc} \frac{q(x)-q(\underline{x})}{\sigma(x)-\sigma(\underline{x})}, & x\neq \underline{x}, \\
\ell , & x=\underline{x}.
\end{array}  \right.
$$
We have that $g$ is continuous on $[c,d]$ and therefore there exists $M_g$ such that 
$$ |g(x)|\leq M_g,~ \forall x\in[c,d].$$
Then, for every $\gamma>M:= \frac{M_g}{2 \sigma(\underline{x})}$ we have
\begin{align*}
& \gamma > \frac{q(x)-q(\underline{x})}{2\sigma(\underline{x})(\sigma(x)-\sigma(\underline{x}))}, & \forall x\in[c,d], x\neq \underline{x}, \\
\iff & \gamma \cdot 2\sigma(\underline{x})(\sigma(x)-\sigma(\underline{x})) > q(x)-q(\underline{x}),& \forall x\in[c,d], x\neq \underline{x}, \\
\Longrightarrow &  \gamma(\sigma(x)+\sigma(\underline{x}))(\sigma(x)-\sigma(\underline{x}))> q(x)-q(\underline{x}),& \forall x\in[c,d], x\neq \underline{x}, \\
\iff & \gamma \sigma(x)^2 - q(x) > \gamma \sigma(\underline{x})^2 - q(\underline{x}), & \forall x\in[c,d], x\neq \underline{x}.
\end{align*}

\item We suppose without loss of generality that $\sigma$ attains its maximum over $[c,d]$ at a unique point $\overline{x}$. Define $G:[c,d]\rightarrow\mathbb{R}$ by
$$
G(x) = \left\{ \begin{array}{cc} \frac{q(x)-q(\overline{x})}{\sigma(x)-\sigma(\overline{x})}, & x\neq \overline{x}, \\
L , & x=\overline{x}.
\end{array}  \right.
$$
We have that $G$ is continuous on $[c,d]$ and therefore there exists $M_G$ such that 
$$ |G(x)|\leq M_G,~ \forall x\in[c,d].$$
Then, for every $\gamma<m:= -\frac{M_G}{2 \sigma(\underline{x})}$ we have
\begin{align*}
& \gamma < \frac{q(\overline{x})-q(x)}{2\sigma(\underline{x})(\sigma(\overline{x})-\sigma(x))}, & \forall x\in[c,d], x\neq \underline{x}, \\
\iff & \gamma \cdot 2\sigma(\underline{x})(\sigma(\overline{x})-\sigma(x)) < q(\overline{x})-q(x),& \forall x\in[c,d], x\neq \underline{x}, \\
\Longrightarrow &  \gamma(\sigma(\overline{x})+\sigma(x))(\sigma(\overline{x})-\sigma(x)) < q(\overline{x})-q(x),& \forall x\in[c,d], x\neq \overline{x}, \\
\iff &  \gamma \sigma(\overline{x})^2 - q(\overline{x}) < \gamma \sigma(x)^2 - q(x), & \forall x\in[c,d], x\neq \overline{x}.
\end{align*}
\end{enumerate}

\end{proof}

\begin{proof}[\bf Proof of Lemma \ref{lemma:coercivity}.] ~ \\ 

\noindent (a) If $\tilde{q}<0$, the boundedness of $b$ and $\sigma$, together with Lemma \ref{lemmagrowth} makes $g$ coercive in $z$ and the supremum in this variable can be restricted to a compact. The property on $\gamma$ is independent of the sign of $\tilde{q}$ and is presented next. Recall the Hamiltonian
$$
H(t,x,y,z,\gamma) = \sup_{a\in  A} \inf_{\mathfrak n \in N}\left\{\frac12 \gamma \sigma(t,x,\mathfrak n )^2 - k(t,x,a,\mathfrak n )y - c(t,x,a) + b(t,x,a,\mathfrak n )z \right\}.
$$
It follows from Lemma \ref{lemma:orly} the existence of $m,M\in\mathbb{R}$ such that if $\gamma>M$ then the infimum in $H$ is attained at the minimizer $\underline{\nu}$ of $\sigma(t,x,\cdot)$ and if $\gamma<m$ then the infimum in $H$ is attained at the maximizer $\overline{\nu}$ of $\sigma(t,x,\cdot)$.

\vspace{0.5em}\noindent
Suppose now that $\tilde{p}>0$. It follows again from Lemma \ref{lemma:orly}, that for $\gamma$ big enough, the infimum in $G$ is attained at the minimizer $\underline{\nu}$. For $\gamma$ negative enough, the infimum in $G$ is attained at the maximizer $\overline{\nu}$. This means that there exists some $R:=R(t,x,y,u,p,\tilde{p},q,\tilde{q},r)$ such that $G$ and $H$ attain its minima at the same value $\mathfrak n \in N$ for $|\gamma|>R$. Therefore $G$ does not depend on $\gamma$ and the supremum on $\gamma$ can be restricted to the set $|\gamma|\leq R$. 

\vspace{0.5em}\noindent
Suppose finally that $\tilde{p}<0$. Then for $\gamma$ big enough, the infimum in $G$ is attained at the maximizer $\overline{\nu}$. For $\gamma$ small enough, the infimum in $G$ is attained at the minimizer $\underline{\nu}$. In both cases, the dependence of $G$ on $\gamma$ is given by the term $\tilde{p} |\gamma|(\sigma(t,x,\overline{\nu})^2 - \sigma(t,x,\underline{\nu})^2 )$ so it follows that $g$ is coercive in $\gamma$.

\vspace{0.5em}\noindent (b) $R$ is continuous as a consequence of the maximum theorem \cite{berge1963topological}. Indeed, the minimizer and maximizer correspondences are upper hemicontinuous and single-valued, therefore they are continuous functions and $R$ is continuous.
\end{proof}


\section{Proof of Theorem \ref{main:Principal}}\label{appendix:Principal} 

The following Lemma is used in the proof of Theorem \ref{main:Principal}. Its proof is omitted, being a path-wise approximation of deterministic Lebesgue integrals. 

\begin{Lemma}\label{key:lemma} Define the process $K(Z,\Gamma)$ by
\begin{equation}K_t(Z,\Gamma) =\int_0^t \left(F^\star(s,Y_s,Z_s,\widehat\sigma^2_s) +\frac12\text{Tr}\left( \widehat\sigma^2_s \Gamma_s\right) - H(s,X_s,Y_s,Z_s,\Gamma_s) \right)ds.
\end{equation}
Then, for any bounded map $\psi$, there exists a sequence $k^p$ of elementary controls such that for any $\varepsilon>0$ and any $p$ big enough
\begin{equation}
\label{inequality:lemma} 
\left| \int_0^t \psi_s dK_s(Z,\Gamma)-\int_0^t \psi_s dk^p_s  \right| \leq \varepsilon, \; \mathcal P_P-q.s. 
\end{equation}
\end{Lemma}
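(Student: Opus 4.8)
The plan is to exploit that $t\mapsto K_t(Z,\Gamma)$ is absolutely continuous and thereby reduce the claim to a pathwise Riemann--Stieltjes approximation of a deterministic integral. Write $\eta_s:=F^\star(s,X_s,Y_s,Z_s,\widehat\sigma^2_s)+\tfrac12\,\mathrm{Tr}(\widehat\sigma^2_s\Gamma_s)-H(s,X_s,Y_s,Z_s,\Gamma_s)$, so that $K_t(Z,\Gamma)=\int_0^t\eta_s\,ds$. First I would record two properties of $\eta$. It is nonnegative: evaluating the infimum defining $H(s,\cdot)$ at $\Sigma=\widehat\sigma^2_s$ and using Assumption~\ref{assumption:isaacs} to exchange the inner $\inf$--$\sup$ gives $H(s,X_s,Y_s,Z_s,\Gamma_s)\le\tfrac12\,\mathrm{Tr}(\widehat\sigma^2_s\Gamma_s)+F^\star(s,X_s,Y_s,Z_s,\widehat\sigma^2_s)$, i.e. $\eta_s\ge0$; this is also what makes $K(Z,\Gamma)$ a genuine nondecreasing process. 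And it is pathwise integrable, $\int_0^T|\eta_s|\,ds<+\infty$ $\mathcal P_P$-q.s., by the polynomial growth of $F^\star$ from Lemma~\ref{lemmagrowth}, the uniform boundedness of $\sigma$ (hence of $\widehat\sigma^2$ under each $\mathbb P\in\mathcal P_P$), the continuity of $H$, and the integrability (resp.\ boundedness, cf.\ the reduction to bounded controls in Section~\ref{section:remarkHJBI}) of $Z$ and $\Gamma$. Since $\psi$ is bounded, it follows that for $\mathcal P_P$-a.e.\ path $\int_0^t\psi_s\,dK_s(Z,\Gamma)=\int_0^t\psi_s\eta_s\,ds$ is an ordinary Lebesgue integral, and the problem becomes the pathwise approximation of this quantity uniformly in $t$.

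Next I would fix a refining sequence of deterministic partitions $\pi^p=\{0=t^p_0<\cdots<t^p_{N_p}=T\}$ with $|\pi^p|\to0$ and let $k^p$ be the piecewise-constant, $\mathbb R^+$-valued process obtained by sampling $K(Z,\Gamma)$ along $\pi^p$ \emph{with one step of delay}: $k^p$ jumps only at the times $t^p_{i-1}$, with jump size $\Delta^p_{i-1}:=\int_{t^p_{i-2}}^{t^p_{i-1}}\eta_s\,ds$ (setting $t^p_{-1}:=0$). Because $\eta\ge0$ the increments are nonnegative and the values of $k^p$ lie in $\mathbb R^+$; because $\eta$ is $\mathbb F$-progressively measurable, $\Delta^p_{i-1}$ is $\mathcal F_{t^p_{i-1}}$-measurable, so $k^p$ is a bona fide elementary control in the sense of the definition of $\mathcal U$. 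Its Stieltjes integral is the (delayed) Riemann sum $\int_0^t\psi_s\,dk^p_s=\sum_{i\,:\,t^p_{i-1}\le t}\psi_{t^p_{i-1}}\Delta^p_{i-1}$.

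It then remains to bound the error. One writes $\int_0^t\psi_s\,dk^p_s-\int_0^t\psi_s\eta_s\,ds=\int_0^t\bigl(\psi_{\theta^p(s)}-\psi_s\bigr)\eta_s\,ds+R^p_t$, where $\theta^p(s)$ is a grid point with $|\theta^p(s)-s|\le|\pi^p|$ and the remainder satisfies $|R^p_t|\le\|\psi\|_\infty\sup_{|I|\le|\pi^p|}\int_I|\eta_s|\,ds$, which $\to0$ for $\mathcal P_P$-a.e.\ $\omega$ by absolute continuity of $s\mapsto\int_0^s|\eta_r|\,dr$. For the main term, $\psi$ bounded and $\eta\in L^1$ provide a dominating function and $\theta^p(s)\to s$ for every non-grid $s$, so whenever $s\mapsto\psi_s$ is regular along the path — which is the situation in every application of the lemma, $\psi$ there being assembled from derivatives of a smooth test function evaluated along the continuous paths $(X,Y)$ — dominated convergence gives that the error tends to $0$ as $p\to\infty$, uniformly in $t\in[0,T]$, $\mathcal P_P$-q.s.; hence for $p$ large (depending on $\varepsilon$ and the path) the bound \eqref{inequality:lemma} holds. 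The step I expect to demand the most care is not any single estimate but the $\mathcal P_P$-q.s.\ bookkeeping: $\widehat\sigma^2$, the pathwise construction of $K(Z,\Gamma)$ and the pathwise continuity of $(X,Y)$ are available only outside a fixed $\mathcal P_P$-polar set, and one has to check that the \emph{same} adapted sequence $k^p$ — whose increments were made $\mathcal F_{t^p_{i-1}}$-measurable precisely through the one-step delay — works outside that set, after aggregating the exceptional sets attached to the various $\mathbb P\in\mathcal P_P$.
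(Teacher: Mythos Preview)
Your approach is precisely what the paper has in mind: it omits the proof entirely, stating only that it ``is a path-wise approximation of deterministic Lebesgue integrals,'' and your argument --- exploiting the absolute continuity $K_t(Z,\Gamma)=\int_0^t\eta_s\,ds$ and building piecewise-constant $k^p$ by sampling along a refining partition --- is exactly this. Your care about the one-step delay so that the increments are $\mathcal F_{t^p_{i-1}}$-measurable, and your remark that the applications only involve $\psi$ built from smooth test functions along continuous paths (so that dominated convergence applies), both go beyond what the paper records.
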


\vspace{0.5em}\noindent
\textbf{Proof of Theorem \ref{main:Principal}}

The proof follows the ideas of \cite{sirbu2014stochastic}. Intuitively, $V_0$ has to be greater than $v^-$ since $v^-$ is roughly speaking the HJBI equation associated with the problem of the Principal when $K$ has the particular decomposition \eqref{decomp:K}. In other words, the value of the unrestricted problem for the Principal has to be a super-solution of such HJBI equation.
\vspace{1em}

\textbf{Step 1. $v^-$ is a viscosity super-solution of \eqref{HJBI}.}

We prove that $v^-$ is a viscosity super-solution of \eqref{HJBI} by contradiction. 

\begin{itemize}

\item[1.] \textbf{The viscosity supersolution property on $[0,T)$}
\begin{itemize}
\item[a.] \textbf{Setting the contradiction.} Let $\varphi$ be some map from $[0,T]\times \mathbb R^d\times \mathbb R\longrightarrow \mathbb R$ continuously differentiable in time and twice continuously differentiable in space. Let $(t_0,x_0,y_0)\in [0,T)\times \mathbb R^d\times \mathbb R$ be such that $v^- - \varphi$ attains a strict local minimum equal to $0$ at this point. We assume (by contradiction) that 
\begin{equation}\label{contradict:supersol}
\partial_t \varphi(t_0,x_0,y_0) + G(t_0,x_0,y_0,\varphi,\nabla_x \varphi,\partial_y \varphi,\Delta_{xx} \varphi ,\partial_{yy}\varphi ,\nabla_{xy}  \varphi) > 0
\end{equation}
In particular, there exists some $(\hat z,\hat \gamma)\in \mathbb R^d\times \mathcal M_{d,d}(\mathbb R)$ and a small $\varepsilon >0$ such that
\begin{align*}
\partial_t \varphi(t_0,x_0,y_0) + \inf_{\mathfrak n  \in N} g (t_0,x_0,y_0,\varphi,\nabla_x \varphi,\partial_y \varphi,\Delta_{xx} \varphi ,\partial_{yy}\varphi ,\nabla_{xy}  \varphi, \hat z ,\hat \gamma, \mathfrak n ) > \varepsilon.
\end{align*}
Recall that $g$ is continuous and $N$ is a compact subset of some finite dimensional space. From Heine's Theorem, we deduce that there exists some $\varepsilon'>0$ such that for any $(t,x,y)\in \mathcal B((t_0,x_0,y_0);\varepsilon')$ we have
\begin{align}\label{ineq:sursol:epsilon}
\partial_t \varphi(t,x,y) + \inf_{\mathfrak n \in N} g (t,x,y,\varphi,\nabla_x \varphi,\partial_y \varphi,\Delta_{xx} \varphi ,\partial_{yy}\varphi ,\nabla_{xy}  \varphi, \hat z ,\hat \gamma, \mathfrak n ) > \varepsilon'.
\end{align}
We denote $\mathcal T_{\varepsilon'}:= \overline{\mathcal B((t_0,x_0,y_0); \varepsilon')}\setminus \mathcal B((t_0,x_0,y_0); \frac{\varepsilon'}{2})$. On $\mathcal T_{\varepsilon'}$, we have $v^->\varphi $ so that the maximum of $\varphi-v^-$ is attained and is negative. Thus, there exists some $\eta>0$ such that $\varphi < v^--\eta$ on $\mathcal T_{\varepsilon'}$. From \cite[Lemma 3.8]{sirbu2014stochastic} there exists a non decreasing sequence $w_n$ in $\mathbb V^-$ converging to $v^-$. Then, there exists $n_0\geq 1$ such that for any $n\geq n_0$ large enough, $\varphi + \frac{\eta}2< w_n$ on $\mathcal T_{\varepsilon'}$. We denote by $w_{n_0+}$ such $w_n$. Thus, for $0<\delta<\frac{\eta}2$ we define
$$ w^\delta:=\begin{cases}
\displaystyle (\varphi +\delta)\vee w_{n_0+}, \text{ on } \mathcal B((t_0,x_0,y_0); \varepsilon'),\\
w_{n_0+}, \text{ outside } \mathcal B((t_0,x_0,y_0); \varepsilon').
\end{cases}
$$
Notice that 
\begin{align}\label{v-:contradiction}
\nonumber w^\delta(t_0,x_0,y_0)&= (\varphi (t_0,x_0,y_0)+\delta)\vee w_{n_0+} (t_0,x_0,y_0)\\
\nonumber&\geq \varphi (t_0,x_0,y_0)+\delta\\
&> v^-(t_0,x_0,y_0).
\end{align}

Thus proving that $w^\delta \in \mathbb V^-$ provides the desired contradiction. From now, we fix some $t\in [0,T]$ and $\tau\in\mathbb{B}^t$. We need to build a strategy $(\tilde Z, \tilde K)\in \mathcal U_{Y_0}(t,\tau)$ such that Property $(ii-)$ in Definition \ref{def:sursous} holds. Recall that $w_{n_0+}\in \mathbb V^-$, thus there exists some elementary strategy $(\tilde Z^1(\tau),\tilde K^1(\tau))\in \mathcal U_{Y_0}(t,\tau)$ such that Property $(ii-)$ in Definition \ref{def:sursous} holds. 

\item[b.] \textbf{Building the elementary strategy and Property (ii-)}
\noindent 
We consider the following strategy that we denote by $(\tilde{\mathcal Z},\tilde{\mathcal K})$.
\begin{itemize}
\item If $\varphi+\delta > w_{n_0+}$ at time $\tau$, we choose the strategy $(\hat z,\hat k^p(\hat z, \hat \gamma))$, where $\hat k^p(\hat z, \hat \gamma)$ is such that inequality \eqref{inequality:lemma} holds with $\frac\varepsilon 2$.
\item Otherwise we follow the elementary strategy $(\tilde Z^1(\tau),\tilde K^1(\tau))$. 
\end{itemize}
Let $\tau_1$ be the first time when $(t,X_t, Y_t)$ exits from $\mathcal B((t_0,x_0,y_0); \varepsilon)$ (which can be $\tau$ itself). On the boundary of this ball, we know that $w^\delta=w_{n_0+}$, thus we choose the strategy $(\tilde Z^1(\tau_1), \tilde K^1(\tau_1))\in \mathcal U(t,\tau_1)$, coinciding with the strategy associated with $w_{n_0+}$ starting at $\tau_1$.

\vspace{0.3em}
Rigorously speaking, define
\begin{align*}
\tilde {\mathcal Z}(s,x(\cdot),y(\cdot))&:=\hat z \mathbf 1_{ \{ \varphi( \tau(x,y), x(\tau(x,y)), y(\tau(x,y))  )+\delta >w_{n_0+} ( \tau(x,y), x(\tau(x,y)), y(\tau(x,y))  ) \} }\\
&+ \tilde Z^1_s(\tau) \mathbf 1_{ \{ \varphi( \tau(x,y), x(\tau(x,y)), y(\tau(x,y))  )+\delta \leq w_{n_0+} ( \tau(x,y), x(\tau(x,y)), y(\tau(x,y))  ) \} },
\end{align*} 
\begin{align*}
\tilde {\mathcal K}(s,x(\cdot),y(\cdot))&:=\hat k_s^p(\hat z,\hat \gamma) \mathbf 1_{ \{ \varphi( \tau(x,y), x(\tau(x,y)), y(\tau(x,y))  )+\delta >w_{n_0+} ( \tau(x,y), x(\tau(x,y)), y(\tau(x,y))  ) \} }\\
&+ \tilde K^1_s(\tau) \mathbf 1_{ \{ \varphi( \tau(x,y), x(\tau(x,y)), y(\tau(x,y))  )+\delta \leq w_{n_0+} ( \tau(x,y), x(\tau(x,y)), y(\tau(x,y))  ) \} },
\end{align*}
and the stopping rule $\tau_1:C([t,T],\mathbb{R}^{d+1}) \longrightarrow [t,T]$ by
$$
\tau_1(x,y) = \underset{\tau(x,y)\leq s \leq T}{\rm inf} ~ (s,x(s),y(s))\in \partial \mathcal B((t_0,x_0,y_0); \varepsilon).
$$
Then, we consider the following strategy
\begin{equation}\label{strategy:proof:sirbu}
\tilde Z:= \tilde{\mathcal Z}\otimes_{\tau_1} \tilde Z^1(\tau_1),\; \tilde K:=\tilde{\mathcal K}\otimes_{\tau_1} \tilde K^1(\tau_1).
\end{equation}
From Lemma 2.8 in \cite{sirbu2014stochastic}, we have $(\tilde Z,\tilde K)\in  \mathcal U(t,\tau).$ It remains to prove that $\tilde K$ satisfies the minimality condition \eqref{eq:minimality condition} to conclude that the strategy defined by \eqref{strategy:proof:sirbu} is in $ \mathcal U_{Y_0}(t,\tau)$. Using a measurability selection argument as in the proof of Theorem 5.3 in \cite{soner2012wellposedness}, for any $\varepsilon>0$ there exists a weak solution $\mathbb P^\varepsilon$ such that $K(\hat z,\hat\gamma)\leq \varepsilon,\; \hat{\mathbb P}^\varepsilon-a.s.$. By Lemma \ref{key:lemma}, we deduce that for any $\varepsilon>0$, any $p$ big enough and any $t\in [0,T]$, $|\hat k_t^p(\hat z, \hat \gamma)|\leq 2\varepsilon,\; \hat{\mathbb P}^\varepsilon-a.s.$ Hence, $(\tilde Z,\tilde K)\in  \mathcal U_{Y_0}(t,\tau).$\vspace{0.3em}

Fix $(Z,K)\in \mathcal U_{Y_0}(t,t)$, $(\mathbb P,\nu)\in \mathcal V_{Y_0}(t,t)$ and $\rho$ a stopping rule in $\mathbb B^t$ such that $\tau\leq \rho\leq T$. With the notations in Definition \ref{def:sursous} (ii-), we define the event
$$A:= \left\{  \varphi(\tau', X_{\tau'},Y_{\tau'})+\delta >w_{n_0+} (\tau', X_{\tau'},Y_{\tau'})\right\}. $$
Applying Ito's formula to $\varphi+\delta$ on $A$, and setting $\sigma_r:=\sigma(r,X_{r}^{\hat z,\hat k^p}, \nu_r)$, 
we get for any $t\leq \tau'\leq s'\leq s\leq \tau_1'$
\begin{align*}
\nonumber \varphi(s,X_s^{\hat z,\hat k^p}, Y_s^{\hat z,\hat k^p})&=\varphi(s',X_{s'}^{\hat z,\hat k^p}, Y_{s'}^{\hat z,\hat k^p})+ \int_{s'}^s \left(\nabla_x \varphi+\partial_y \varphi \hat z\right)\cdot \sigma_r dW_r^\star\\
\nonumber &+\int_{s'}^s \partial_t \varphi +g(r, X_{r}^{\hat z,\hat k^p}, Y_{r}^{\hat z,\hat k^p}, \varphi, \nabla_x\varphi,\partial_y\varphi,\Delta_{xx}\varphi,\partial_{yy}\varphi,\nabla_{xy}\varphi, \hat z,\hat \gamma,\nu_r) dr\\
&+\int_{s'}^s \partial_y\varphi(r, X_{r}^{\hat z,\hat k^p}, Y_{r}^{\hat z,\hat k^p} )\left(d\hat k^p_s - dK_r(\hat z,\hat\gamma)\right).
\end{align*}
From Lemma \ref{key:lemma} together with \eqref{ineq:sursol:epsilon}, we get (for $p$ big enough)
\begin{align*}
\varphi(s,X_s^{\hat z,\hat k^p}, Y_s^{\hat z,\hat k^p})&>\varphi(s',X_{s'}^{\hat z,\hat k^p}, Y_{s'}^{\hat z,\hat k^p})+\int_{s'}^s \left(\nabla_x \varphi+\partial_y \varphi \hat z\right)\cdot \sigma_r dW_r^\star\\
&+(s-s')\frac{\varepsilon'}{2}.
\end{align*}
Thus, $\varphi$ is a sub-martingale on $[\tau,\tau_1]$ under $\mathbb P$ and Property $(ii-)$ is satisfied on $[\tau',\tau'_1]$. On $A^c$, $w_{n_0+}$ automatically satisfies Property $(ii-)$. By noticing that for any $\tau'\leq s\leq \tau_1'$
$$X_s^{t,x,(Z,K)\otimes_{\tau}(\tilde Z,\tilde K),\nu}=\mathbf 1_A X_s^{t,x,(Z,K)\otimes_{\tau}(\hat z,\hat k^p),\nu}+ \mathbf{1}_{A^c}X_s^{t,x,(Z,K)\otimes_{\tau}(\tilde Z^1(\tau),\tilde K^1(\tau)),\nu},  $$
using iterated conditioning and by following the same lines in proof 1.1 of Theorem 3.5 in \cite{sirbu2014stochastic}, we deduce that $w^\delta\in \mathbb V^-$, which contradicts \eqref{v-:contradiction}. Thus, 
$$\partial_t \varphi(t_0,x_0,y_0) + G(t_0,x_0,y_0,\varphi,\nabla_x \varphi,\partial_y \varphi,\Delta_{xx} \varphi ,\partial_{yy}\varphi ,\nabla_{xy}  \varphi) \leq 0.$$
\end{itemize}

\item[2.] \textbf{The viscosity supersolution property at time $T$.} We now aim at proving that $v^-(T,x,y)\geq U_P(L(x)-U_A^{-1}(y))$ for any $(x,y)\in \mathbb R^d\times \mathbb R$. This proof follows the same lines that the step 3 of the proof of Theorem 3.1 in \cite{bayraktar2013stochastic} or the proof of Theorem 3.5, 1.2 in \cite{sirbu2014stochastic}. We assume by contradiction that there exists $(x_0,y_0)\in \mathbb R^d\times \mathbb R$ such that $v^-(T,x_0,y_0)< U_P(L(x_0)-U_A^{-1}(y_0))$. Since $U_P$ is continuous, there exists $\varepsilon>0$ such that 
$$ U_P(L(x)-U_A^{-1}(y))\geq v^-(T,x,y)+\varepsilon,\; (x,y)\in \mathcal B((x_0,y_0);\varepsilon).$$ We define $\mathcal T_\varepsilon:= \overline{\mathcal B((T,x_0,y_0);\varepsilon)} \setminus \mathcal B((T,x_0,y_0);\frac\varepsilon2) .$ Let $\eta>0$ be small enough such that
$$ v^-(T,x_0,y_0) +\varepsilon <\frac{\varepsilon^2}{4\eta} +\inf_{(t,x,y)\in \mathcal T_\varepsilon} v^-(t,x,y).$$
Thus, using exactly the same Dini type arguments that in \cite{sirbu2014stochastic,bayraktar2014dynkin}, there exists $n_0$ big enough such that for some $w_{n_0}\in \mathbb V^-$ we have
$$ v^-(T,x_0,y_0) +\varepsilon <\frac{\varepsilon^2}{4\eta}+ \inf_{(t,x,y)\in \mathcal T_\varepsilon} w_{n_0}(t,x,y).$$
We now define for any $\lambda>0$
$$\varphi^{\varepsilon,\eta,\lambda}(t,x,y):=v^-(T,x_0,y_0)-\frac{\|(x,y)-(x_0,y_0)\|^2}{\eta}-\lambda(T-t).$$
By using the result of Lemma \ref{lemma:coercivity}, for some $\lambda>0$ large enough, we get for any $(t,x,y)\in \overline{\mathcal B((T,x_0,y_0);\varepsilon)}$
$$-\partial_t \varphi^{\varepsilon,\eta,\lambda}(t,x,y)- G(t,x,y,\varphi^{\varepsilon,\eta,\lambda},\nabla_x \varphi^{\varepsilon,\eta,\lambda},\partial_y \varphi^{\varepsilon,\eta,\lambda},\Delta_{xx} \varphi^{\varepsilon,\eta,\lambda} ,\partial_{yy}\varphi^{\varepsilon,\eta,\lambda},\nabla_{xy}  \varphi^{\varepsilon,\eta,\lambda} )<0.$$

Moreover, such $\varphi^{\varepsilon,\eta,\lambda}$ satisfies on $\mathcal T_\varepsilon$
\begin{align*}\varphi^{\varepsilon,\eta,\lambda}(t,x,y)&\leq v^-(T,x_0,y_0) -\frac{\varepsilon^2}{4\eta} \\
&\leq w_{n_0}(t,x,y)-\varepsilon,
\end{align*}
and on $ \mathcal B((x_0,y_0);\varepsilon)$,
$$ \varphi^{\varepsilon,\eta,\lambda}(T,x,y)\leq v^-(T,x,y)\leq U_P(L(x)-U_A^{-1}(y))-\varepsilon.$$
Thus, for $0<\delta<\frac{\eta}2$ we define
$$ w^{\varepsilon, \eta,\lambda,\delta}:=\begin{cases}
\displaystyle (\varphi^{\varepsilon,\eta,\lambda} +\delta)\vee w_{n_0+}, \text{ on } \mathcal B((T,x_0,y_0); \varepsilon),\\
w_{n_0+}, \text{ outside } \mathcal B((T,x_0,y_0); \varepsilon).
\end{cases}
$$

The rest of the proof is analogous similar to the step 1, we show that $ w^{\varepsilon, \eta,\lambda,\delta}\in \mathbb V^-$ and 
\begin{align*}
w^{\varepsilon, \eta,\lambda,\delta}(T,x_0,y_0)&=v^-(T,x_0,y_0)+\delta\\
&>v^-(T,x_0,y_0),
\end{align*}
which leads to a contradiction. We conclude that 
$$v^-(T,x,y)\geq U_P(L(x)-U_A^{-1}(y))$$ for any $(x,y)\in \mathbb R^d\times \mathbb R.$

\end{itemize}

\textbf{Step 2. $v^+$ is a viscosity sub-solution of \eqref{HJBI}.}

We now prove that $v^+$ is a viscosity sub-solution of \eqref{HJBI} by contradiction. 

\begin{itemize}

\item[1.] \textbf{The viscosity subsolution property on $[0,T)$}
\begin{itemize}
\item[a.] \textbf{Setting the contradiction.} Let $\varphi$ be some map from $[0,T]\times \mathbb R^d\times \mathbb R\longrightarrow \mathbb R$ continuously differentiable in time and twice continuously differentiable in space. Let $(t_0,x_0,y_0)\in [0,T)\times \mathbb R^d\times \mathbb R$ be such that $v^+ - \varphi$ attains a strict local maximum equal to $0$ at this point. We assume (by contradiction) that 
\begin{equation}\label{contradict:subsol}
\partial_t \varphi(t_0,x_0,y_0) + G(t_0,x_0,y_0,\varphi,\nabla_x \varphi,\partial_y \varphi,\Delta_{xx} \varphi ,\partial_{yy}\varphi ,\nabla_{xy}  \varphi) < 0
\end{equation}
Then, for any $(z,\gamma)\in \mathbb R^d\times \mathcal M_{d,d}(\mathbb R)$, we have 

\begin{equation*}
\partial_t \varphi(t_0,x_0,y_0) + \inf_{\mathfrak n \in N} g(t_0,x_0,y_0,\varphi,\nabla_x \varphi,\partial_y \varphi,\Delta_{xx} \varphi ,\partial_{yy}\varphi ,\nabla_{xy}  \varphi,z,\gamma,\mathfrak n ) < 0.
\end{equation*}
Therefore, there exists $\varepsilon>0$ and $\hat\nu(z,\gamma)\in N$ such that
\begin{equation*}
\partial_t \varphi(t_0,x_0,y_0) + g(t_0,x_0,y_0,\varphi,\nabla_x \varphi,\partial_y \varphi,\Delta_{xx} \varphi ,\partial_{yy}\varphi ,\nabla_{xy}  \varphi,z,\gamma,\hat\nu(z,\gamma)) < -\varepsilon.
\end{equation*}
Using the continuity of our applications, we deduce that on $\mathcal B((t_0,x_0,y_0);\varepsilon)$ we have, for a small $\varepsilon'>0$,
\begin{equation*}
\partial_t \varphi(t,x,y) + g(t,x,y,\varphi,\nabla_x \varphi,\partial_y \varphi,\Delta_{xx} \varphi ,\partial_{yy}\varphi ,\nabla_{xy}  \varphi,z,\gamma,\hat\nu(z,\gamma)) < -\varepsilon'.
\end{equation*}

We denote $\mathcal T_{\varepsilon'}:= \overline{\mathcal B((t_0,x_0,y_0); \varepsilon')}\setminus \mathcal B((t_0,x_0,y_0); \frac{\varepsilon'}{2})$. On $\mathcal T_{\varepsilon'}$, we have $v^+<\varphi $ so that the minimum of $\varphi-v^+$ is attained and is positive. Thus, there exists some $\eta>0$ such that $\varphi > v^++\eta$ on $\mathcal T_{\varepsilon'}$. From \cite[Lemma 3.8]{sirbu2014stochastic} there exists a non increasing sequence $w_n$ in $\mathbb V^+$ converging to $v^+$. Then, there exists a positive real $n_0\geq 1$ such that for any $n\geq n_0$ large enough, $\varphi - \frac{\eta}2> w_n$ on $\mathcal T_{\varepsilon'}$. We denote by $w_{n_0+}$ such $w_n$. Thus, for $0<\delta<\frac{\eta}2$ we define
$$ w^\delta:=\begin{cases}
\displaystyle (\varphi -\delta)\wedge w_{n_0+}, \text{ on } \mathcal B((t_0,x_0,y_0); \varepsilon'),\\
w_{n_0+}, \text{ outside } \mathcal B((t_0,x_0,y_0); \varepsilon').
\end{cases}
$$

Notice that 
\begin{align}\label{v+:contradiction}
\nonumber w^\delta(t_0,x_0,y_0)&= (\varphi (t_0,x_0,y_0)-\delta)\wedge w_{n_0+} (t_0,x_0,y_0)\\
\nonumber&\leq \varphi (t_0,x_0,y_0)-\delta\\
&<v^+(t_0,x_0,y_0).
\end{align}

Thus proving that $w^\delta \in \mathbb V^+$ provides the desired contradiction. From now, we fix $t\in [0,T]$, a stopping rule $\tau\in\mathbb{B}^t$ and $(Z, K)\in \mathcal U_{Y_0}(t,\tau)$. We need to build a strategy $(\mathbb P, \tilde \nu)\in \mathcal V_{Y_0}(t,\tau)$ such that Property $(ii+)$ in the definition \ref{def:sursous} of a super-solution holds. Recall that $w_{n_0+}\in \mathbb V^+$, thus for the fixed $(Z, K)\in \mathcal U_{Y_0}(t,\tau)$, there exists some elementary strategy $(\tilde{\mathbb P}, \tilde{\nu}^1)\in \mathcal V_{Y_0}(t,\tau)$ such that Property $(ii+)$ in Definition \ref{def:sursous} holds.

\item[b.] \textbf{Building the elementary strategy and Property (ii+)}
\noindent 
We consider the following strategy that we denote by $\tilde \nu$.

\begin{itemize}
\item If $\varphi-\delta < w_{n_0+}$ at time $\tau$, we choose the strategy $( \hat {\mathbb P},\hat{\nu}(Z,0))$, where $\hat{ \mathbb P} \in \mathcal P_A$ is such that the minimality condition $(ii)$ in Theorem \ref{thm:optimaleffort} holds with control $K$, 
\item Otherwise we follow the elementary strategy $(\tilde{\mathbb P},\tilde{\nu}^1)$. 
\end{itemize}
 
The rest of this part is completely similar to Step 1., paragraph 1.b. with control
\begin{align*}
\tilde {\nu}_t&:=\hat \nu(Z,0) \mathbf 1_{\{ \varphi-\delta <w_{n_0+} \} }+ \tilde{\nu}^1_t \mathbf 1_{ \{ \varphi-\delta \geq w_{n_0+} \} },
\end{align*}
and considering the event
$$
\tilde A:= \left\{  \varphi(\tau', X_{\tau'},Y_{\tau'})-\delta <w_{n_0+} (\tau', X_{\tau'},Y_{\tau'})\right\}. 
$$
Applying Ito's formula to $\varphi+\delta$ on $\tilde A$, and setting $\sigma_r:=\sigma(r,X_{r}^{\tilde \nu}, \hat \nu(Z,0))$, 
we get for any $t\leq \tau'\leq s'\leq s\leq \tau_1'$
\begin{align*}
\nonumber \varphi(s,X_s^{\tilde \nu}, Y_s^{\tilde \nu})&=\varphi(s',X_{s'}^{\tilde \nu}, Y_{s'}^{\tilde \nu})+ \int_{s'}^s \left(\nabla_x \varphi+\partial_y \varphi Z \right)\cdot \sigma_r dW_r^\star +\int_{s'}^s \partial_t \varphi dr\\
\nonumber & +g(r, X_{r}^{\tilde \nu}, Y_{r}^{\tilde \nu}, \varphi, \nabla_x\varphi,\partial_y\varphi,\Delta_{xx}\varphi,\partial_{yy}\varphi,\nabla_{xy}\varphi, Z,0,\hat \nu(Z,0)) dr\\
&+\int_{s'}^s \partial_y\varphi(r, X_{r}^{\tilde \nu}, Y_{r}^{\tilde \nu} )  dK_s.
\end{align*}
Since $K=0$ under $\hat{\mathbb P}$ we have that $\varphi$ is a super--martingale under $\hat{\mathbb P}$ and $(ii+)$ is satisfied on $[\tau,\tau_1]$. We thus deduce similarly that $w^\delta\in \mathbb V^+$ which contradicts \eqref{v+:contradiction} so that for any $(t,x,y)\in [0,T)\times \mathbb R^d\times\mathbb R $
$$-\partial_t \varphi(t,x,y) - G(t,x,y,\varphi,\nabla_x \varphi,\partial_y \varphi,\Delta_{xx} \varphi ,\partial_{yy}\varphi ,\nabla_{xy}  \varphi) \leq 0$$
\end{itemize}

\item[2.] \textbf{The viscosity supersolution property at time $T$.}
We now aim at proving that $v^+(T,x,y)\leq U_P(L(x)-U_A^{-1}(y))$ for any $(x,y)\in \mathbb R^d\times \mathbb R$. This proof follows the same lines that the previous step 1.2. We assume by contradiction that there exists $(x_0,y_0)\in \mathbb R^d\times \mathbb R$ such that $v^+(T,x_0,y_0)> U_P(L(x_0)-U_A^{-1}(y_0))$. By continuity, there exists $\varepsilon>0$ such that 
$$ U_P(L(x)-U_A^{-1}(y))\leq v^+(T,x,y)-\varepsilon,\; (x,y)\in \mathcal B((x_0,y_0);\varepsilon).$$ We define $\mathcal T_\varepsilon:= \overline{\mathcal B((T,x_0,y_0);\varepsilon)} \setminus \mathcal B((T,x_0,y_0);\frac\varepsilon2) .$ Let $\eta>0$ be small enough such that
$$ v^+(T,x_0,y_0)+\frac{\varepsilon^2+4\ln(1+\frac\varepsilon 2)}{4\eta}  >\varepsilon+\sup_{(t,x,y)\in \mathcal T_\varepsilon} v^+(t,x,y).$$
Thus, using exactly the same Dini type arguments that in \cite{sirbu2014stochastic,bayraktar2014dynkin}, there exists $n_0$ big enough such that for some $w_{n_0}\in \mathbb V^+$ we have
$$v^+(T,x_0,y_0)+\frac{\varepsilon^2+4\ln(1+\frac\varepsilon 2)}{4\eta}  >\varepsilon+ \sup_{(t,x,y)\in \mathcal T_\varepsilon} w_{n_0}(t,x,y).$$
We now define for any $\lambda>0$
$$\varphi^{\varepsilon,\eta,\lambda}(t,x,y):=v^+(T,x_0,y_0)+\frac{\|x-x_0\|^2+\ln(1+|y-y_0|)}{\eta}+\lambda(T-t).$$
By using the result of Lemma \ref{lemma:coercivity}, for some $\lambda>0$ large enough, we get for any $(t,x,y)\in \overline{\mathcal B((T,x_0,y_0);\varepsilon)}$
$$-\partial_t \varphi^{\varepsilon,\eta,\lambda}(t,x,y)- G(t,x,y,\varphi^{\varepsilon,\eta,\lambda},\nabla_x \varphi^{\varepsilon,\eta,\lambda},\partial_y \varphi^{\varepsilon,\eta,\lambda},\Delta_{xx} \varphi^{\varepsilon,\eta,\lambda} ,\partial_{yy}\varphi^{\varepsilon,\eta,\lambda},\nabla_{xy}  \varphi^{\varepsilon,\eta,\lambda} )>0.$$

In particular, such $\varphi^{\varepsilon,\eta,\lambda}$ satisfies on $\mathcal T_\varepsilon$
\begin{align*}\varphi^{\varepsilon,\eta,\lambda}(t,x,y)&\geq v^+(T,x_0,y_0)+\frac{\varepsilon^2+4\ln(1+\frac\varepsilon 2)}{4\eta}  \\
&\geq w_{n_0}(t,x,y)+\varepsilon,
\end{align*}
and on $ \mathcal B((x_0,y_0);\varepsilon)$,
$$ \varphi^{\varepsilon,\eta,\lambda}(T,x,y)\geq v^-(T,x,y)\geq U_P(L(x)-U_A^{-1}(y))+\varepsilon.$$
Thus, for $0<\delta<\varepsilon$ small enough we define
$$ w^{\varepsilon, \eta,\lambda,\delta}:=\begin{cases}
\displaystyle (\varphi^{\varepsilon,\eta,\lambda} -\delta)\wedge w_{n_0+}, \text{ on } \mathcal B((T,x_0,y_0); \varepsilon),\\
w_{n_0+}, \text{ outside } \mathcal B((T,x_0,y_0); \varepsilon).
\end{cases}
$$

The rest of the proof is completely similar to the step 1.b, we show that $ w^{\varepsilon, \eta,\lambda,\delta}\in \mathbb V^+$ and 
\begin{align*}
w^{\varepsilon, \eta,\lambda,\delta}(T,x_0,y_0)&=v^+(T,x_0,y_0)+\delta\\
&>v^+(T,x_0,y_0),
\end{align*}
which leads to a contradiction. We deduce that 
$$v^+(T,x,y)\leq U_P(L(x)-U_A^{-1}(y))$$ for any $(x,y)\in \mathbb R^d\times \mathbb R.$

\end{itemize}

\textbf{General conclusion.}
In step 1 (resp. in step 2) we have proved that $v^-$ is a viscosity super-solution (resp. $v^+$ is a viscosity sub-solution) of the HJBI equation \eqref{HJBI}. If a comparison theorem in the viscosity sense holds, then we deduce from \eqref{ineg:v+-V} that 
$$v^-(0,x,Y_0)\leq  V_0^P(Y_0) \leq v^+(0,x,Y_0) \leq v^-(0,x,Y_0),\; x\in \mathbb R^d,$$
which proves the theorem.

\end{document}